\newtheorem{theo}{Theorem}[section]
\newtheorem{lemma}[theo]{Lemma}
\newtheorem{defi}[theo]{Definition}
\newtheorem{prop}[theo]{Proposition}
\newtheorem{cor}[theo]{Corollary}
\newtheorem{remark}[theo]{Remark}
\numberwithin{equation}{section}
\def\R{\mathbb{R}}
\def\C{\mathbb{C}}
\def\Z{\mathbb{Z}}
\def\W{\mathcal{W}}
\def\kk{\mathbf{k}}
\def\co{\colon\thinspace}
\def\db#1{D^b({#1})}
\def\coh{\operatorname{coh}}
\def\lto{\longrightarrow}
\def\bdot{{\scriptscriptstyle\bullet}}
\def\PP{{\mathbb P}}
\def\pre-tr{\operatorname{pre-tr}}
\def\Hom{\operatorname{Hom}}
\def\End{\operatorname{End}}
\def\Ob{\operatorname{Ob}}
\def\perf#1{{\mathfrak P}{\mathfrak e}{\mathfrak r}{\mathfrak f}({#1})}
\newcommand{\bbA}{{\mathbb A}}
\newcommand{\bbC}{{\mathbb C}}
\newcommand{\bbZ}{{\mathbb Z}}
\newcommand{\bbP}{{\mathbb P}}
\newcommand{\cO}{{\mathcal O}}
\newcommand{\cN}{{\mathcal N}}
\newcommand{\cA}{{\mathcal A}}
\newcommand{\cC}{{\mathcal C}}
\newcommand{\cT}{{\mathcal T}}
\newcommand{\coker}{\operatorname{Coker}}
\newcommand{\Ext}{\operatorname{Ext}}
\newcommand{\id}{\operatorname{id}}
\newcommand{\Gr}{\operatorname{Gr}}
\title{Homological mirror symmetry for punctured spheres}
\subjclass[2010]{Primary 53D37, 14J33; Secondary 53D40, 53D12, 18E30, 14F05}
\author[M.~Abouzaid]{Mohammed Abouzaid}
\address{Department of Mathematics, Columbia University, 2990 Broadway, 
New York, NY 10027, USA}
\email{abouzaid@math.columbia.edu}
\author[D.~Auroux]{Denis Auroux}
\address{Department of Mathematics, UC Berkeley, Berkeley CA 94720-3840,
USA}
\email{auroux@math.berkeley.edu}
\author[A.~I.~Efimov]{Alexander I. Efimov}
\address{
Algebraic Geometry Section, Steklov Mathematical Institute, Russian Academy
of Sciences, 8 Gubkin str., Moscow 119991, Russia}
\email{efimov13@yandex.ru}
\author[L.~Katzarkov]{Ludmil Katzarkov}
\address{
Department of Mathematics, Universit\"at Wien, Garnisongasse 3, Vienna
A-1090, Austria
}
\email{ludmil.katzarkov@univie.ac.at}
\author[D.~Orlov]{Dmitri Orlov}
\address{
Algebraic Geometry Section, Steklov Mathematical Institute, Russian Academy
of Sciences, 8 Gubkin str., Moscow 119991, Russia}
\email{orlov@mi.ras.ru}
\begin{document}

\begin{abstract}
We prove that the wrapped Fukaya category of a punctured sphere
($S^{2}$ with an arbitrary number of points removed) is equivalent to
the triangulated category of singularities of a mirror Landau-Ginzburg model, proving one side of the homological mirror symmetry conjecture in this case.  By investigating fractional gradings on these categories, we conclude that cyclic covers on the symplectic side are mirror to orbifold quotients of the Landau-Ginzburg model.
\end{abstract}

\maketitle

\section{Introduction}

\subsection{Background}
In its original formulation, Kontsevich's celebrated homological mirror symmetry
conjecture \cite{KoICM} concerns mirror pairs of Calabi-Yau varieties,
for which it predicts an equivalence between the derived category of
coherent sheaves of one variety and the derived Fukaya category of the
other. This conjecture has been studied extensively, and while evidence has
been gathered in a number of examples including abelian varieties
\cite{Fuk,KS,KO}, it has so far only been proved for elliptic curves \cite{PZ}, the quartic K3 surface
\cite{SeK3}, and their products \cite{ASmith}.

Kontsevich was also the first to suggest that homological mirror
symmetry can be extended to a much more general setting \cite{KoENS}, by
considering {\em Landau-Ginzburg models}. Mathematically, a Landau-Ginzburg model
is a pair $(X,W)$ consisting of a variety $X$ and a
holomorphic function $W:X\to\C$ called {\it superpotential}. As far as
homological mirror symmetry is concerned, the symplectic geometry of a
Landau-Ginzburg model is determined by its Fukaya category, studied
extensively by Seidel (see in particular \cite{seidel-book}), while the
B-model is determined by the triangulated category of singularities of
the superpotential \cite{Orlov}.

After the seminal works of Batyrev, Givental, Hori, Vafa, and many others, there are many known examples of Landau-Ginzburg mirrors to Fano varieties, especially in the toric case \cite{Cho,FOOO,CL} where the examples can be understood using T-duality, generalising the ideas of Strominger, Yau, and Zaslow \cite{SYZ} beyond the case of Calabi-Yau manifolds.     One direction of the mirror symmetry conjecture, in which the $B$-model consists of coherent sheaves on a Fano variety, has been established for toric Fano varieties in \cite{bondal-ruan, AKO1, Ueda, AbToric,FLTZ}, as well as for  for del Pezzo surfaces \cite{AKO2}.  A proof in the other direction, in which the $B$-model is the category of matrix
factorizations of the superpotential,  has also been announced \cite{AFOOO}.

While Kontsevich's suggestion was originally studied for Fano manifolds, a
more recent (and perhaps unexpected) development  first
proposed by the fourth author is that mirror symmetry also extends to varieties of
general type, many of which also admit mirror Landau-Ginzburg models
\cite{KKOY,AAK,GKR}. The first instance of homological mirror symmetry
in this setting was established for the genus 2 curve by
Seidel~\cite{Segenus2}. Namely, Seidel has shown that
the derived Fukaya category of a smooth genus 2 curve is
equivalent to the triangulated category of singularities of a certain
3-dimensional Landau-Ginzburg model (one notable feature of mirrors of
varieties of general type is that they tend to be higher-dimensional).
Seidel's argument was subsequently extended to higher genus curves
\cite{efimov}, to pairs of pants and their higher-dimensional analogues
\cite{sheridan}, and to Calabi-Yau hypersurfaces in projective space \cite{sheridan-2}.

Unfortunately, the ordinary Fukaya category consisting of closed Lagrangians
is insufficient in order to  fully state the Homological mirror conjecture when the $B$-side is
a Landau-Ginzburg model which fails to be proper or a variety which fails
to be smooth.  The structure sheaf of a non-proper component of the critical fiber of
a Landau-Ginzburg model, or that of a singular point in the absence of any superpotential,
generally have endomorphism algebras which are not of finite cohomological dimension, and
hence cannot have mirrors in the ordinary Fukaya category, which is cohomologically finite.
As all smooth affine varieties of the same dimension have isomorphic derived categories
of coherent sheaves with compact support, one is led to seek a category
of Lagrangians which would contain objects that are mirror to more
general sheaves or matrix factorizations.

It is precisely to fill this role that the {\it wrapped Fukaya category}
was constructed \cite{AS}. This Fukaya category, whose objects
also include non-compact Lagrangian submanifolds, more accurately reflects
the symplectic geometry of open symplectic manifolds, and  by
recent work \cite{generate,ganatra}, is known in some generality
 to be homologically smooth in the sense of
Kontsevich \cite{KS-notes} (homological smoothness also holds for categories of matrix factorizations \cite{lunts,preygel,LP}).

In this paper, we give the first non-trivial verification that these categories
are indeed relevant to Homological mirror symmetry:  the non-compact Lagrangians
we shall study will correspond to structure sheaves of irreducible components
of a quasi-projective variety, considered as objects of its category of singularities.
In particular, we provide the first computation of wrapped Fukaya categories beyond the
case of cotangent bundles, studied in \cite{A-cotangent} using string
topology. Since the writing of this paper, Bocklandt found a connection to
non-commutative algebras coming from dimer models which allows an
extension of our results to general punctured surfaces \cite{bocklandt}.

As a final remark, we note that these categories should be of interest even
when considering mirrors of compact symplectic manifolds.  Indeed, since Seidel's
ICM  address \cite{seidel-ICM}, the standard approach to proving Homological mirror
symmetry in this case is to first prove it for the complement of a divisor, then
solve a deformation problem.  As we have just explained, a proper formulation
of Homological mirror symmetry for the complement involves the wrapped Fukaya category.
More speculatively \cite{seidel-speculation}, one expects that the study of the
wrapped Fukaya category will be amenable to sheaf-theoretic techniques.  The starting
point of such a program is  the availability of
natural restriction functors (to open subdomains)~\cite{AS}, which are expected to
be mirror to restriction functors from the category of sheaves of a reducible variety
to the category of sheaves on each component. This suggests that it might be possible to study homological mirror symmetry by a combination of sheaf-theoretic techniques and deformation theory, reducing the problem to elementary building blocks
such as pairs of pants. While this remains a distant perspective, it
very much motivates the present study.

\subsection{Main results}

In this paper, we study homological mirror symmetry for an open genus~0
curve $C,$ namely,
$\PP^1$ minus a set of $n\ge 3$ points. A Landau-Ginzburg model mirror
to $C$ can be constructed by viewing $C$ as a hypersurface in $(\C^*)^2$
(which can be compactified to a rational curve in $\PP^1\times\PP^1$ or a
Hirzebruch surface). The procedure described in \cite{KKOY}
(or those in \cite{GKR} or \cite{AAK}) then yields
a (noncompact) toric 3-fold $X(n),$ together with a superpotential
$W:X(n)\to\C,$ which we take as the mirror to $C$.
For $n=3$ the Landau-Ginzburg model
$(X(3), W)$ is the three-dimensional
affine space $\C^3$ with the superpotential $W=xyz$, while for $n>3$ points
$X(n)$ is more complicated (it is a toric resolution of a 3-dimensional
singular affine toric variety); see Section \ref{s:LGmodel} and Fig.~\ref{fig:LG}
for details.

We focus on one side of homological mirror symmetry, in which we consider the  wrapped Fukaya category
of $C$ (as defined in \cite{AS,generate}), and the associated triangulated derived category  $D \mathcal{W}(C)$ (see Section (3j) of \cite{seidel-book}).  Our main theorem asserts that this triangulated category is equivalent to the triangulated category of singularities \cite{Orlov} of the singular fiber $W^{-1}(0)$ of $(X(n),W)$. In fact, we obtain a slightly stronger result than stated below, namely a quasi-equivalence between the natural $A_\infty$-enhancements of these two
categories.

\begin{theo}\label{th:main}
Let $C$ be the complement of a finite set of $n\ge 3$ points in $\PP^1,$ and
let $(X(n),W)$ be the Landau-Ginzburg model defined in Section \ref{s:LGmodel}.
Then the derived wrapped Fukaya category
 of $C$, $D\mathcal{W}(C)$,  is equivalent to the triangulated category of singularities
$D_{sg}(W^{-1}(0)).$
\end{theo}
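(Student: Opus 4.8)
The plan is to identify both triangulated categories with the category of perfect modules over one and the same, explicitly presented, $A_\infty$-algebra, computed independently from each side and then compared. On the symplectic side I would first put $C$ in a convenient normal form --- the Liouville domain associated to its realisation as a curve in $(\C^{*})^{2}$, so that its ribbon-graph spine reflects the Newton polygon of the defining equation; one may equally use a complete finite-area hyperbolic structure. Let $L_{1},\dots,L_{r}$ be the cocores of the one-handles of the associated Weinstein handle decomposition: these are properly embedded exact Lagrangian arcs which cut $C$ into discs, one around each puncture. Set $\cA := CW^{*}\big(\bigoplus_i L_i,\bigoplus_i L_i\big)$, the wrapped Floer $A_\infty$-algebra. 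By Abouzaid's generation criterion \cite{generate}, which I would verify for this explicit collection (the arcs cut $C$ into discs, so the open--closed map $HH_{*}(\cA)\to SH^{*}(C)$ hits the unit), $\{L_i\}$ split-generates the wrapped category, so $D\mathcal{W}(C)\simeq\Perf\,\cA$. The cohomology of $\cA$ is spanned by the Reeb chords at the punctures and is read off from the spine, and the $A_\infty$-operations count rigid $J$-holomorphic polygons with boundary on the $L_i$ and corners at these chords; since the target is a curve such polygons are branched covers of immersed polygons and can be enumerated combinatorially once $J$ is fixed, yielding an explicit presentation of $\cA$: semisimple in degree $0$, generators in degree $1$, and higher products governed by how the arcs wind around the punctures.

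On the Landau--Ginzburg side, write $W^{-1}(0)=Z_{1}\cup\dots\cup Z_{M}$ for its decomposition into irreducible toric components. By Orlov's theorem \cite{Orlov}, $D_{sg}(W^{-1}(0))$ is computed by matrix factorisations of $W$ on the smooth toric $3$-fold $X(n)$; and since away from the pairwise intersections each $Z_{j}$ is smooth, a d\'evissage along the toric stratification of $W^{-1}(0)$ shows that the structure sheaves $\cO_{Z_{j}}$, regarded as objects of $D_{sg}$, split-generate. Hence $D_{sg}(W^{-1}(0))\simeq\Perf\,\cB$, where $\cB:=\bR\Hom_{D_{sg}}\big(\bigoplus_{j}\cO_{Z_{j}},\bigoplus_{j}\cO_{Z_{j}}\big)$ is taken in the natural dg enhancement. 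I would compute $\cB$ one toric chart at a time --- each chart being an open piece of the $n=3$ model $(\C^{3},xyz)$, where the matrix-factorisation computation is explicit --- and then assemble the global answer through a \v{C}ech/Mayer--Vietoris spectral sequence, arriving at a presentation of $\cB$ of the same shape as that of $\cA$: semisimple in degree $0$, generators in degree $1$, products dictated by the intersection pattern of the $Z_{j}$.

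The heart of the proof is then to produce an $A_\infty$-quasi-isomorphism $\cA\isomoto\cB$, the generating sets on the two sides being chosen so as to correspond. After passing to minimal models one must match the idempotents, the graded morphism spaces, the products, and --- the one genuinely delicate point --- the signs, which on the $A$-side come from a choice of grading and Spin/orientation data on the arcs and on the $B$-side from a choice of square root of the dualising sheaf used to $\bbZ$-grade $D_{sg}$; these data must be aligned. The transparent case is $n=3$: the pair of pants versus $(\C^{3},W=xyz)$, where $D_{sg}(\{xyz=0\})\simeq\mathrm{MF}(\C^{3},xyz)$ is split-generated by $\cO_{\{x=0\}},\cO_{\{y=0\}},\cO_{\{z=0\}}$ with a classical $A_\infty$-endomorphism algebra, which is matched against the arc algebra of the cocores of the pair of pants. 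For general $n$ I expect the matching to follow either from a direct all-at-once computation of $\cA$ --- the polygon counts in $C$ localise to the pairs of pants through which the polygons pass, so they are determined by the $n=3$ computation --- or from an induction on $n$ in which adjoining one puncture on the symplectic side, mirror to adjoining a toric component (a toric blow-up) on the $B$-side, is controlled by compatible restriction functors: Viterbo restriction $D\mathcal{W}(C)\to D\mathcal{W}(C')$ to a subsurface \cite{AS}, matched with restriction of coherent sheaves to a sub-union of components. The main obstacle is exactly this uniform-in-$n$ bookkeeping: enumerating \emph{all} rigid holomorphic polygons in $C$, including those that wrap several times around a puncture and feed the higher $A_\infty$-products, with the correct signs, and checking that the gluing/restriction data on the two sides genuinely correspond. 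The generation statement for $D_{sg}$ in this non-isolated, non-proper toric situation and the completion/non-compactness subtleties of the wrapped category are further technical points, but are controlled by \cite{Orlov} and \cite{generate} respectively.

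Finally, given an $A_\infty$-quasi-isomorphism $\cA\simeq\cB$, passing to perfect modules yields $D\mathcal{W}(C)\simeq\Perf\,\cA\simeq\Perf\,\cB\simeq D_{sg}(W^{-1}(0))$; and since this equivalence factors through the common $A_\infty$-algebra it is automatically realised by a quasi-equivalence of the natural $A_\infty$-enhancements of the two categories, which is the refinement announced just before the statement of the theorem.
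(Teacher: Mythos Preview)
Your overall plan --- choose generators on each side (arcs on the Fukaya side, structure sheaves $\cO_{H_i}$ of the irreducible toric components on the $B$-side), compute their $A_\infty$-endomorphism algebras, and match --- is exactly the paper's strategy, and your choice of generators coincides with theirs. What you are missing is the device that makes the matching step feasible.

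The paper does \emph{not} attempt to enumerate all rigid polygons or all higher products, nor does it proceed by induction on $n$ via restriction functors. Instead it first computes the Hochschild cohomology of the common cohomology-level algebra $A$ (Lemma~\ref{Hoch_comp}) and deduces (Proposition~\ref{classification}, via Proposition~\ref{MC_solutions}) that $\Z$-graded $A_\infty$-structures on $A$ are classified, up to strict homotopy, by a pair $(a,b)\in k^2$: the constant terms of $m_n(u_{i-1,i},\dots,u_{i,i+1})$ and $m_n(v_{i+1,i},\dots,v_{i,i-1})$. Any two structures with $a$ and $b$ both nonzero are equivalent (Remark~\ref{rm:structures}). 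Hence on each side one only needs to (i) identify the cohomology algebra with $A$, and (ii) exhibit a \emph{single} nonvanishing $m_n$. On the Fukaya side this amounts to locating one embedded $(n{+}1)$-gon (Lemma~\ref{l:mn}); on the $D_{sg}$ side it is a short convolution/twisted-complex argument showing a certain composition is an isomorphism (Lemma~\ref{l:mndsg}). The ``uniform-in-$n$ bookkeeping'' you flag as the main obstacle --- all polygons, all wrappings, all signs --- is precisely what this classification lemma bypasses; without it your proposal is a programme rather than a proof, and neither of your two suggested routes (direct localisation to pairs of pants, or induction on $n$) is actually carried out.

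A smaller point: split-generation via the open--closed criterion only identifies the idempotent completion of $D\W(C)$ with $\Perf\cA$, not $D\W(C)$ itself. The paper instead proves \emph{strict} generation on both sides (Theorem~\ref{thm:thimbles_generate} via a Lefschetz-fibration argument for the Fukaya category, Lemma~\ref{lem:strict_gen_dbsing} for $D_{sg}$), and separately checks in Appendix~B that these categories are already idempotent complete. Your route would need to supply such a statement before concluding an equivalence of $D\W(C)$ rather than of its Karoubi completion.
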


\noindent
The other side of homological mirror symmetry is generally considered to be out of reach of current
technology for these examples, due to the singular nature of the critical locus of~$W$.

\begin{remark}{\rm
The case $n=0$ falls under the rubric of mirror symmetry for Fano varieties,
and is easy to prove since the equatorial circle in $S^2$ is the unique non-displaceable Lagrangian, and the mirror superpotential has exactly one non-degenerate isolated singularity.
Mirror symmetry for $\C$ is trivial in this direction since the Fukaya category
completely vanishes in this case, and the mirror superpotential has no critical point.
Finally, the case $n=2$ can be recovered as a degenerate case of our analysis,
but was already essentially known to experts because the cylinder is symplectomorphic
to the cotangent bundle of the circle, and Fukaya categories of cotangent bundles admit
quite explicit descriptions using string topology \cite{FSS,A-cotangent}.}
\end{remark}

The general strategy of proof is similar to that used by Seidel for the
genus 2 curve, and inspired by it. Namely, we identify specific generators
of the respective categories
(in Section~\ref{s:wrapped} for $D\mathcal{W}(C)$, using a generation
result proved in Appendix A, and in
Section~\ref{s:Dbsing} for $D_{sg}(W^{-1}(0))$,  and show that the corresponding $A_\infty$ subcategories on either side are equivalent by appealing to an algebraic classification lemma  (Section \ref{s:classif}); see also Remark
\ref{rmk:generation} for more about generation).  A general result due to Keller (see Theorem 3.8 of \cite{Ke} or Lemma 3.34 of \cite{seidel-book}) implies that the categories $D\mathcal{W}(C)$ and $D_{sg}(W^{-1}(0))$ are therefore equivalent to the derived categories of the same $A_{\infty}$ category, hence are equivalent to each other.

\medskip

This strategy of proof can be extended to higher genus punctured Riemann
surfaces, the main difference being that one needs to consider larger
sets of generating objects (which in the general case leads to a slightly
more technically involved argument). However, there is a special case in
which the generalization of our result is particularly straightforward,
namely the case of unramified cyclic covers of punctured spheres.
The idea that Fukaya categories of unramified covers are closely related
to those of the base is already present in Seidel's work \cite{Segenus2} and the argument we use is again very similar (this approach can be used in higher dimensions as well, as evidenced in Sheridan's work \cite{sheridan}).  As an illustration,  we prove the following result in Section \ref{s:covers}:

\begin{theo}\label{th:cover}
Given an unramified cyclic $D$-fold cover $C$ of\/ $\PP^1-\{3\ \text{points}\},$
there exists an action of $G=\Z/D$ on the Landau-Ginzburg model $(X(3),W)$
such that the derived wrapped Fukaya category $D\mathcal{W}(C)$
is equivalent to the equivariant triangulated category of
singularities $D_{sg}^G(W^{-1}(0)).$
\end{theo}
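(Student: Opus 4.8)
The plan is to leverage Theorem \ref{th:main} equivariantly. Recall that for $n=3$ the Landau-Ginzburg model is $(\C^3, W=xyz)$, and the singular fiber $W^{-1}(0)$ is the union of the three coordinate hyperplanes. An unramified cyclic $D$-fold cover $C$ of the pair of pants $\PP^1-\{3\text{ points}\}$ is again a punctured sphere (or genus-zero surface—in fact a sphere with $3D$ punctures when the monodromy around each puncture generates $\Z/D$, and more punctures in general); the deck group $G=\Z/D$ acts on $C$ by symplectomorphisms, hence on $D\W(C)$. The strategy is: (i) compute $D\W(C)$ via the same generation-and-classification argument as in Theorem \ref{th:main}, but keeping track of the $G$-action on the chosen generators; (ii) identify the resulting $G$-equivariant $A_\infty$-category with the category of $G$-equivariant objects in the $A_\infty$-enhancement of $D_{sg}(W^{-1}(0))$ for a suitable $G$-action on $(\C^3,xyz)$; (iii) conclude by the equivariant form of Keller's theorem.

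\emph{First}, I would make the $G$-action on $(X(3),W)=(\C^3,xyz)$ explicit. Since the generator of $G$ should act on the three punctures of the pair of pants trivially but permute the $D$ preimages cyclically in the cover, the natural guess is that $G$ acts on $\C^3$ by $(x,y,z)\mapsto(\zeta_D\, x,\, \zeta_D^{a} y,\, \zeta_D^{b} z)$ for appropriate $a,b$ determined by the monodromy data of the cover, chosen so that $W=xyz$ is $G$-invariant, i.e. $1+a+b\equiv 0 \pmod D$. This is where the fractional-grading analysis alluded to in the abstract enters: passing to the cover multiplies the relevant gradings/periods by $D$, and the mirror statement is that rescaling the grading corresponds to the orbifold quotient $[\C^3/G]$. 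I would record the precise dictionary between the combinatorial cover data and the weights $(1,a,b)$.

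\emph{Second}, on the symplectic side I would choose the generating Lagrangians for $D\W(C)$ to be $G$-invariant as a collection — concretely, pull back the generating arcs used for the pair of pants to the cover, so that $G$ permutes the pulled-back arcs according to the deck action. Computing the wrapped Floer $A_\infty$-structure on this collection upstairs reduces to the downstairs computation together with bookkeeping of which sheet each generator and each holomorphic strip lives on; the outcome is that the $A_\infty$-algebra of generators upstairs is the smash product (semidirect product) of the downstairs $A_\infty$-algebra with $\Z/D$, equivalently the endomorphism algebra of $\bigoplus_{g\in G} g\cdot(\text{downstairs generator})$. On the B-side, the $G$-action on $(\C^3,xyz)$ induces a $G$-action on the $A_\infty$-enhancement of $D_{sg}(W^{-1}(0))$ computed in Section \ref{s:Dbsing}, and one checks — using the same classification lemma of Section \ref{s:classif}, now applied $G$-equivariantly, or simply by transporting the $G$-action through the equivalence of Theorem \ref{th:main} — that this action matches the deck action under the identification of the two $A_\infty$-algebras. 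Then $D_{sg}^G(W^{-1}(0))$ is the derived category of the corresponding smash-product $A_\infty$-algebra, and the equivariant version of Keller's theorem (Theorem 3.8 of \cite{Ke}) gives the desired equivalence $D\W(C)\simeq D_{sg}^G(W^{-1}(0))$.

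\emph{Main obstacle.} The delicate point is matching the $G$-action on the abstract $A_\infty$-algebra on the two sides. The classification lemma of Section \ref{s:classif} pins down the $A_\infty$-structure up to quasi-isomorphism, but a priori not together with its $G$-action; one must verify that the action is rigid as well — e.g. that $H^1$ of the relevant complex controlling $G$-equivariant deformations of the identification vanishes, or more concretely that the action is determined on cohomology-level generators and the higher coherence data is forced. Equivalently, one must check that the grading/weight conventions are pinned down so that the symplectic deck action and the chosen toric $G$-action on $\C^3$ are genuinely intertwined and not merely abstractly conjugate. Establishing this compatibility — essentially, that the equivalence of Theorem \ref{th:main} can be chosen $G$-equivariantly — is the heart of the argument; everything else is a careful but routine transcription of the $n=3$ case with an extra index recording the sheet.
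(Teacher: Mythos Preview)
Your proposal contains a factual slip: an unramified cyclic $D$-fold cover of the pair of pants is almost never a punctured sphere. For instance, with $D=3$ and weights $d_1=d_2=d_3=1$ the cover is a three-punctured torus; in general $\chi(C)=-D$ while the number of punctures is $\sum_i \gcd(d_i,D)$, so the genus grows with $D$. Your argument does not actually use rationality of $C$, so this is not a logical gap, but the misconception should be corrected.

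Your overall strategy---realize the endomorphism algebra of the lifted arcs in $\W(C)$ as a smash product of the $n=3$ algebra with $k[\Z/D]$, do the same for $D_{sg}^G$, and match---is essentially the semidirect-product approach of Seidel and Sheridan, which the paper explicitly acknowledges as an alternative in the remark following Proposition~\ref{prop:wrapped_cover}. The paper, however, proceeds differently. Instead of tracking a $G$-action on the algebra $A$ and matching actions across the equivalence of Theorem~\ref{th:main}, it equips $A$ with a $\frac{1}{D}\Z$-grading determined by the weights $(d_1,d_2,d_3)$, extends the classification of $A_\infty$-structures to that fractionally graded setting (Proposition~\ref{classif_rational}), and constructs from $A_{(\tilde p,\tilde q)}$ a $\Z$-graded category $\tilde A_{(\tilde p,\tilde q)}$ on $3D$ objects. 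The subcategory of $\W(C)$ on the lifted arcs and the subcategory of $MF^G(W)$ on the twisted structure sheaves $\cO_{H_i}(\chi_k)$ are then each \emph{independently} identified with $(\tilde A_{(\tilde p,\tilde q)}, m^{1,1})$.

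This reorganization is precisely what dissolves the obstacle you flag. In your approach one must check that the equivalence of Theorem~\ref{th:main} can be made $G$-equivariant, a genuine additional step since the classification lemma of Section~\ref{s:classif} pins down the $A_\infty$-structure only up to non-equivariant homotopy. By building the $\Z/D$-data into the grading from the outset, the paper's fractionally graded classification already controls the full structure on both sides, and no separate compatibility check is needed. A secondary payoff is that the paper recovers the natural $\Z$-grading on $\W(C)$ coming from the holomorphic form $dx/y$, which does \emph{not} descend from a trivialization on the base; your smash-product formulation, as stated, would most naturally yield only a $\Z/2$-graded statement.
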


\begin{remark}
{\rm
The main difference between our approach and that developed in Seidel and
Sheridan's papers \cite{Segenus2,sheridan} is that, rather than
compact (possibly immersed) Lagrangians, we consider the
wrapped Fukaya category, which is strictly larger. The Floer homology of the immersed
closed curve considered by Seidel in \cite{Segenus2} can be recovered from our calculations,
but not vice-versa. There is an obvious motivation for restricting to
that particular object (and its higher dimensional analogue \cite{sheridan}): even though it does not determine the entire
$A$-model in the open case, it gives access to the Fukaya category
of closed Riemann surfaces or projective Fermat hypersurfaces in a fairly direct
manner. On the other hand, open Riemann surfaces and other
exact symplectic manifolds are interesting both in themselves and as building
blocks of more complicated manifolds.
}
\end{remark}

We end this introduction with a brief outline of this paper's organization:  Section \ref{s:Ainfinity} explicitly defines a category $A$, and introduces rudiments of Homological Algebra which are used, in the subsequent section, to classify $A_{\infty}$ structures on this category up to equivalence.  Section \ref{s:wrapped} proves that $A$ is equivalent to a cohomological subcategory of the wrapped Fukaya category of a punctured sphere, and uses the classification result to identify the $A_{\infty}$ structure induced by the count of homolorphic curves.  In this Section, we also prove that our distinguished collection of objects strongly generate the wrapped Fukaya category.

The mirror superpotential is described in Section \ref{s:LGmodel}, and a collection of sheaves whose endomorphism algebra in the category of matrix factorizations is isomorphic to $A$ is identified in the next Section, in which the $A_{\infty}$ structure coming from the natural $dg$ enhancement is also computed and a generation statement proved.  At this stage, all the results needed for the proof of Theorem \ref{th:main} are in place.  Section \ref{s:covers} completes the main part of the paper by constructing the various categories appearing in the statement of Theorem \ref{th:cover}.  The paper ends with two appendices; the first proves a general result providing strict generators for wrapped Fukaya categories of curves, and the second shows that the categories of singularities that we study are idempotent complete.

\subsection*{Acknowledgements}
M.\,A.\ was supported by a Clay Research Fellowship.
D.\,A.\ was partially supported by NSF grants DMS-0652630 and DMS-1007177.
A.\,E.\ was partially supported by the Dynasty Foundation, 
NSh grant 4713.2010.1, and by AG Laboratory HSE, RF government grant, ag. 11.G34.31.0023.
L.\,K.\ was funded by NSF grant DMS-0600800, NSF FRG grant DMS-0652633, FWF
grant P20778 and an ERC grant~-- GEMIS.
D.\,O.\ was partially supported by  RFBR grants 10-01-93113, 11-01-00336, 11-01-00568, NSh grant 4713.2010.1, and
by AG Laboratory HSE, RF government grant, ag. 11.G34.31.0023.
D.\,O.\ thanks the Simons Center for Geometry and Physics for its hospitality and stimulating atmosphere.

The authors acknowledge the hospitality of ESI (Vienna) and MSRI (Berkeley),
where some of this work was carried out.
We would also like to thank Paul Seidel for helpful discussions.

\section{$A_{\infty}\text{-}$structures} \label{s:Ainfinity}

Let $\cA$ be a small $\Z\text{-}$graded category over a field $\kk,$
i.e.\ the morphism spaces $\cA(X, Y)$ are $\Z\text{-}$graded $\kk$-modules and the compositions
$$
\cA(Y, Z)\otimes\cA(X, Y)\longrightarrow \cA(X, Z)
$$
are morphisms of $\Z\text{-}$graded $\kk$-modules.
By grading we will always mean $\Z\text{-}$gradings.

By an
$A_{\infty}\text{-}$structure on $\cA$ we mean a collection of graded maps
$$
m_k:\cA(X_{k-1},X_k)\otimes\dots\otimes\cA(X_0,X_1)\longrightarrow
\cA(X_0,X_k),\quad X_i\in \cA,\quad k\geq 1
$$
of degree
$\deg(m_k)=2-k,$ with $m_1=0,$ and $m_2$ equal to the usual
composition in $\cA,$ such that all together they define an
$A_{\infty}\text{-}$category, i.e.\ they satisfy the
$A_{\infty}\text{-}$associativity equations
\begin{equation}\label{ainf}
\sum_{\substack{s,l,t\\ s+l+t=k}} (-1)^{s+lt}
m_{k-l+1}(\id^{\otimes s}\otimes m_l\otimes \id^{\otimes t})=0,
\end{equation}
for all $k\ge 1.$
Note that additional signs appear when these formulas are applied to
elements, according to the Koszul sign rule $(f\otimes g)(x\otimes y)= (-1)^{\deg
g\cdot\deg x}f(x)\otimes g(y)$ (see \cite{Keller, seidel-book}).

Two $A_{\infty}\text{-}$structures $m$ and $m'$ on $\cA$ are said to be
{\em strictly homotopic} if there exists an $A_{\infty}\text{-}$functor $f$
from $(\cA,m)$ to $(\cA,m')$ that acts identically on objects
and for which $f_1=\id$ as well.

We also recall that an $A_{\infty}\text{-}$functor $f$ consists of a map
$\bar{f}: \Ob(\cA,m)\to \Ob(\cA,m')$  and graded maps
$$
f_k: \cA(X_{k-1},X_k)\otimes\dots\otimes\cA(X_0,X_1)\longrightarrow
\cA(\bar{f}X_0,\bar{f}X_k),\quad  X_i\in \cA,\quad k\geq 1
$$
of degree $1-k$ which satisfy the equations
\begin{multline}\label{ainff}
\sum_{r}\sum_{\substack{u_1,\dots, u_r\\ u_1+\cdots+u_r=k}} (-1)^{\varepsilon} m'_r(f_{u_1}\otimes\dots\otimes f_{u_r})
=\sum_{\substack{ s,l,t,\\ s+l+t=k}} (-1)^{s+lt} f_{k-l+1}(\id^{\otimes s}\otimes
m_l\otimes\id^{\otimes t}),
\end{multline}
where the sign on the left hand side is given by
$\varepsilon=(r-1)(u_1-1)+(r-2)(u_{2}-1)+\cdots+ i_{r-1}.$

Now
we introduce a $\kk$-linear category $A$ that plays a central role
in our considerations. It depends on an integer $n\ge 3$ and is
defined by the following rule:
\begin{equation}
\label{eq:defA}
\Ob(A)=\{X_1,\dots,X_n\},\quad A(X_i,X_j)=\begin{cases}\kk[x_i, y_{i}]/(x_i y_{i}) & \text{for }j=i,\\
\kk[x_{i+1}]\,u_{i,i+1}= u_{i,i+1}\,\kk[y_{i}] & \text{for }j=i+1,\\
\kk[y_{i-1}]\,v_{i,i-1}= v_{i,i-1}\,\kk[x_i] & \text{for }j=i-1,\\
0 &\text{otherwise.}\end{cases}
\end{equation}
Here the indices are mod $n,$ i.e. we put $X_{n+1}=X_1,$
and $x_{n+1}=x_1, \, y_{n+1}=y_1.$

Compositions in this category are defined as follows. First of all, the
above formulas already define $A(X_i,X_i)$ as $\kk$-algebras,
and $A(X_i,X_{j})$ as  $A(X_i,X_i)\text{--}A(X_{j},X_{j})\text{-}$bimodules.
To complete the definition, we set
$$
(x_i^k u_{i-1,i})\circ (v_{i,i-1}x_i^l):=x_i^{k+l+1},
\quad (v_{i,i-1}x_i^l)\circ (x_i^k u_{i-1,i}):=y_{i-1}^{k+l+1}
$$
for any two morphisms
$x_i^k\,u_{i-1,i}\in A(X_{i-1},X_i)$  and $v_{i,i-1}x_i^l \in A(X_i,X_{i-1}).$
All the other compositions vanish. Thus, $A$ is defined as a $\kk$-linear category.

Choosing some collection of odd integers $p_1,\dots,p_n,q_1,\dots,q_n$ we can define a grading on $A$  by the formulas
$$
\deg(u_{i-1,i}:X_{i-1}\longrightarrow X_i):=p_i,\quad\deg(v_{i,i-1}:X_i\longrightarrow X_{i-1}):=q_i.
$$
That implies  $\deg x_i=\deg y_{i-1}=p_i+q_i.$ All these gradings are refinements of the same $\Z/2$-grading on $A.$

In what follows we will require that the following conditions hold:
\begin{equation}\label{cond}
p_1,\dots, p_n, q_1,\dots, q_n \quad\text{are odd, and }\quad p_1+\dots+p_n=q_1+\dots+q_n=n-2.
\end{equation}
\begin{defi}\label{categ}
For such collections of $p=\{p_i\}$ and $q=\{q_i\}$ we denote by
$A_{(p,q)}$ the corresponding $\Z$-graded category. \end{defi}

We are interested in describing all $A_\infty$-structures on the category $A_{(p,q)}.$ As we will see, these structures are in bijection
with pairs $(a, b)$ of elements $a, b\in \kk.$

Let $\cA$ be a small $\Z$-graded category over a field $\kk.$
It will be convenient to consider the bigraded Hochschild complex $CC^{\bdot}(\cA)^{\bdot},$
$$
CC^{k+l}(\cA)^l=\prod_{\substack{X_0,\dots,X_k\in
\cA}}\Hom^l(\cA(X_{k-1},X_k)\otimes\dots\otimes\cA(X_0,X_1),\;\cA(X_0,X_k)).
$$
with the Hochschild differential $d$ of bidegree $(1,0)$ defined by
\begin{multline*}
dT(a_{k+1},\dots, a_1)= (-1)^{(k+l)(\deg(a_1)-1)+1} T(a_{k+1},\dots,
a_2)a_1 +\\
+\sum_{j=1}^k (-1)^{\epsilon_j+(k+l)-1} T(a_{k+1},\dots,
a_{j+1}a_j,\dots, a_1) + (-1)^{\epsilon_k +(k+l)}
a_{k+1}T(a_k,\dots, a_1),
\end{multline*}
where the sign is defined by the rule
$\epsilon_j=\sum_{i=1}^j\deg{a_i}-j.$
We denote by $HH^{k+l}(\cA)^l$ the bigraded Hochschild cohomology.

%
%
%
%
%


Denote by $A_{\infty}S(\cA)$ the set of
$A_{\infty}\text{-}$structures on $\cA$
up to strict homotopy.

Basic obstruction theory implies the following proposition, which will be sufficient for our purposes.

\begin{prop}\label{MC_solutions} Assume that the small\/ $\Z$-graded $\kk$-linear category $\cA$ satisfies the following conditions
\begin{equation}\label{no_obstr1}
HH^2(\cA)^j=0\text{ for } j\le -1 \text{ and } j\ne -l,
\end{equation}
and
\begin{equation}\label{no_obstr}
HH^3(\cA)^j=0\text{ for }j<-l,
\end{equation}
for some positive integer\/ $l\ge 1.$
Then for any $\phi\in HH^{2}(\cA)^{-l}$ there is an
$A_{\infty}\text{-}$structure $m^{\phi}$ with $m_3=\cdots=m_{l+1}=0,$ for which the class of $m_{l+2}$ in $HH^2(\cA)^{-l}$ is equal to
$\phi.$ Moreover, the natural map
$$
HH^2(\cA)^{-l}\to A_{\infty}S(\cA), \quad \phi\mapsto m_{\phi},
$$
is a surjection, i.e. any other $A_{\infty}\text{-}$structure is strictly homotopic to $m_{\phi}.$
\end{prop}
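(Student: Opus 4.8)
The statement is a standard application of obstruction theory for $A_\infty$-deformations of a graded category, so the plan is to set up the Maurer–Cartan / deformation formalism carefully and then run the obstruction argument twice: once to construct $m^\phi$, and once to show every structure is strictly homotopic to one of the $m_\phi$.

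First I would recall the basic dictionary: an $A_\infty$-structure on $\cA$ extending the given composition is the same as a solution of the Maurer–Cartan equation in the (shifted) Hochschild DG Lie algebra, where $m = m_2 + (m_3 + m_4 + \cdots)$ and the higher terms $\mu := \sum_{k\ge 3} m_k$ live in $\prod_{k\ge 3}\prod_j CC^{k}(\cA)^{2-k}$; the associativity equations \eqref{ainf} become $d\mu + \tfrac12[\mu,\mu] = 0$, where $d$ is the Hochschild differential of bidegree $(1,0)$ and $[\,\cdot\,,\cdot\,]$ is the Gerstenhaber bracket, which has bidegree $(1, ?)$ — more precisely it preserves total degree $2$ in the first index but adds the internal (second) degrees. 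Likewise, two MC solutions are strictly homotopic exactly when they are gauge-equivalent by an element of $\prod_{k\ge 2} CC^{k}(\cA)^{1-k}$ acting via the usual gauge action (this is the content of the definition of strict homotopy with $f_1 = \id$). Keeping track of the \emph{internal} grading $j$ (the superscript in $CC^{k+l}(\cA)^l$) is the crucial bookkeeping: $m_k$ contributes in internal degree $2-k$, so an MC solution is naturally filtered by $-j = k-2 \ge 1$, i.e. by the ``weight'' $w := -j$.

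Next comes the construction of $m^\phi$. Given $\phi \in HH^2(\cA)^{-l}$, choose a cocycle representative $\psi \in CC^2(\cA)^{-l} = \prod CC^{l+2}(\cA)^{-l}$; since $d\psi = 0$ this is an $m_{l+2}$-term with $m_3 = \cdots = m_{l+1} = 0$ satisfying the associativity equations in the lowest relevant weight. Now I would build the remaining terms $m_{l+3}, m_{l+4}, \dots$ by induction on the weight $w \ge l+1$. At each stage the obstruction to extending is a Hochschild $3$-cocycle of internal degree $-(w) < -l$ (it is the ``$[\mu_{<w},\mu_{<w}]$'' correction, which lands in internal degree strictly less than $-l$ once we are past the first step), hence it is a coboundary by hypothesis \eqref{no_obstr}, and we can solve for $m_{w+2}$. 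This produces a genuine $A_\infty$-structure $m^\phi$ whose $m_{l+2}$-class is $\phi$. The only subtlety is checking that the obstruction at weight $w$ really is a \emph{cocycle} of internal degree $-w$ with $w > l$ — this follows from the graded Jacobi identity for the Gerstenhaber bracket together with the vanishing of the lower terms, exactly as in the classical Gerstenhaber obstruction calculus; I would cite \cite{Keller} for the sign-careful version.

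Finally, surjectivity. Let $m'$ be an arbitrary $A_\infty$-structure on $\cA$. I would show by induction on the weight $w$ that $m'$ can be brought, by a strict homotopy, to agree with some $m^\phi$ up to weight $w$; in the limit this gives strict homotopy to $m^\phi$ for $\phi := [m'_{l+2}] \in HH^2(\cA)^{-l}$ (noting $m'_3 = \cdots = m'_{l+1}$ can be killed using $HH^2(\cA)^{j} = 0$ for $-l < j \le -1$, which is hypothesis \eqref{no_obstr1}). Concretely: in weight $w < l$ the relevant cohomology $HH^2(\cA)^{-w}$ vanishes, so $m'_{w+2}$ is a coboundary and can be gauged away by a suitable $f_{w+1}$; in weight $w = l$ we can adjust $m'_{l+2}$ within its cohomology class $\phi$, matching the chosen representative; and in weights $w > l$ the difference $m'_{w+2} - m^\phi_{w+2}$, having appropriately exact lower-weight context, is again a Hochschild $2$-coboundary of internal degree $-w < -l$ — but here I must instead use that the \emph{obstruction-to-gauging} lives in $HH^2(\cA)^{-w}$, and I need this to vanish for $w > l$: that is again \eqref{no_obstr1} (the case $j = -w < -l$, which is $\le -1$ and $\ne -l$). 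So at every weight the correction term is either a genuine coboundary or lies in a vanishing cohomology group, and the gauge transformation exists.

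**Main obstacle.** The real work — and the only place where errors creep in — is the sign-careful verification that the iterative obstructions are closed and land in the claimed internal degree, i.e. assembling the Gerstenhaber bracket/Jacobi bookkeeping so that \eqref{no_obstr1} and \eqref{no_obstr} apply verbatim. Everything else is formal once that is pinned down; I would organize the induction so that a single lemma (``the weight-$w$ obstruction is a $d$-cocycle in $CC^{3}(\cA)^{-w}$, resp. $CC^2(\cA)^{-w}$'') is proved once and reused, citing \cite{Keller, seidel-book} for the underlying sign conventions.
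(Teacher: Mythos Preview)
Your proposal is correct and follows essentially the same approach as the paper's proof: both run the standard obstruction-theoretic induction, using \eqref{no_obstr} to extend the partial structure $(m_3,\dots,m_{l+2})=(0,\dots,0,\tilde\phi)$ to a full $A_\infty$-structure, and \eqref{no_obstr1} to build the strict homotopy from an arbitrary $m'$ to some $m^\phi$. The only difference is cosmetic: you phrase the induction in Maurer--Cartan/Gerstenhaber-bracket language, whereas the paper works directly with the elementary polynomial obstructions $\Phi_k$ and $\Psi_k$ (isolated in two short lemmas stating $d\Phi_k=0$ and $d\Psi_k=0$), which keeps the sign bookkeeping you flag as the ``main obstacle'' entirely self-contained.
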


To prove this proposition, we recall some well-known statements from obstruction theory.
Let $m$ be an $A_{\infty}\text{-}$structure on a graded category $\cA.$ Let us consider the
$A_{\infty}\text{-}$constraint (\ref{ainf}) of order $k+1.$ Since $m_1=0$ it is the first constraint that
involves $m_k.$ Moreover, it can be written in the form
\begin{equation}
d m_k=\Phi_k(m_3,\dots,m_{k-1}),
\end{equation}
where $d$ is the Hochschild differential and
$\Phi_k=\Phi_k(m_3,\dots,m_{k-1})$ is a quadratic expression.

Similarly, let $m$ and $m'$ be two $A_{\infty}$-structures on a graded category $\cA,$ and let
$f=(\bar{f}=\id;f_1=\id,f_2, f_3,\dots )$ be a strict
homotopy between $m$ and $m'.$ Since $m_1=m'_1=0$ the order $k+1$
$A_{\infty}\text{-}$constraint (\ref{ainff}) is the first one that contains $f_k.$ It can be written as
\begin{multline}\label{Psi}
df_k=\Psi_k(f_2,\dots,f_{k-1};m_3,\dots, m_{k+1}; m'_3,\dots, m'_{k+1})=\\=\Psi'_k(f_2,\dots, f_{k-1}; m_3,\dots, m_k; m'_3,\dots, m'_{k})+ m'_{k+1}-m_{k+1}
\end{multline}
where $d$ is the Hochschild differential and
$\Psi_k$ is a polynomial expression.
The following lemma is well-known and can be proved by a direct calculation.

\begin{lemma}\label{obstr_m_n} In the above notations, let $d$ be the Hochschild differential.
\begin{itemize}
\item[(1)] Assume that the first $k$ $A_{\infty}\text{-}$constraints (\ref{ainf}), which depend only on $m_{<k},$ hold.
 Then $$d \Phi_k(m_3,\dots, m_{k-1})=0.$$
\item[(2)] Let $m$ and $m'$ be two $A_{\infty}\text{-}$structures on a graded category $\cA,$ and $f$ a strict homotopy
between them.
Assume that the first $k$ $A_{\infty}\text{-}$constraints (\ref{ainff}), which depend
only on $f_{<k},$ hold. Then
$$
d\Psi_k (f_2,\dots, f_{k-1};m,m')=0.
$$
\end{itemize}
\end{lemma}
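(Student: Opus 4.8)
The plan is to treat both statements as instances of the standard deformation-theoretic fact that the obstruction to extending a structure is automatically a cocycle; in each case this reduces to one algebraic identity for the bar differential, after which the lemma is a (tedious but routine) sign verification. To keep the signs uniform I would first pass to the shifted morphism spaces $\cA(X,Y)[1]$ and reinterpret each $m_k$ as a degree $+1$ map on the reduced tensor coalgebra $T=\bigoplus_{k\ge 1}\cA(X_{k-1},X_k)[1]\otimes\cdots\otimes\cA(X_0,X_1)[1]$. With this shift the Koszul signs $(-1)^{s+lt}$ built into \eqref{ainf} become the intrinsic signs attached to composing coderivations, the $A_\infty$-relations become the single equation $b^2=0$ for the coderivation $b$ determined by $\{m_k\}$, and the Hochschild differential $d$ is exactly the bracket $[m_2,-]$ with the fixed quadratic term.

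For part (1), set $M=m_3+\cdots+m_{k-1}$ and note that $\Phi_k(m_3,\dots,m_{k-1})$ is, up to the sign normalization of the paper, the order-$(k+1)$ component of $\tfrac12[M,M]$, where $[-,-]$ is the Gerstenhaber bracket (the commutator of coderivations on $T$). The hypothesis that the first $k$ relations \eqref{ainf} hold says precisely that $dM+\tfrac12[M,M]$ vanishes through order $k$. Applying $d=[m_2,-]$ to $\Phi_k$ and using the graded Jacobi identity $[m_2,[M,M]]=2[[m_2,M],M]$, every term of $[[m_2,M],M]$ of total order $k+1$ pairs a factor of $[m_2,M]$ — whose components in orders $\le k$ are governed by the already-assumed relations — with a factor of $M$; invoking those lower relations makes the contributions cancel, so $d\Phi_k=0$. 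Unwound into elements this is the computation in which one writes $d\Phi_k$ as a sum of triple compositions $m_i(\ldots m_j(\ldots m_2\ldots)\ldots)$ and $m_2(\ldots m_i(\ldots m_j\ldots)\ldots)$ and reorganizes them by associativity of insertion so that \eqref{ainf} produces the cancellations.

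For part (2), I would use that the data $(\id;\id,f_2,\dots,f_{k-1})$ assembles into a partial morphism of tensor coalgebras $F$, and that the functor relations \eqref{ainff} are exactly the order-by-order components of the single equation $Fb_m=b_{m'}F$. The hypothesis that the first $k$ relations hold says the coderivation-valued defect $Fb_m-b_{m'}F$ vanishes through order $k$; its order-$(k+1)$ component is precisely the expression $\Psi_k$, normalized as in \eqref{Psi} with the displayed term $m'_{k+1}-m_{k+1}$ separated off. Composing this identity once more with $b_m$ on the right and $b_{m'}$ on the left, and using $b_m^2=b_{m'}^2=0$ — i.e. the $A_\infty$-relations for $m$ and for $m'$, which hold by assumption — one finds that $d$ annihilates the order-$(k+1)$ defect, giving $d\Psi_k=0$. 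At the level of elements this is again the reorganization of $d\Psi_k$, expanded as a sum of compositions of the $f_j$, $m_j$, $m'_j$ with one extra $m_2$ or $m'_2$ inserted, using the lower relations \eqref{ainff} together with associativity for $m$ and $m'$.

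The main obstacle is not conceptual but bookkeeping: reconciling the explicit signs $(-1)^{s+lt}$ and $(-1)^{\varepsilon}$ of \eqref{ainf}–\eqref{ainff} with the intrinsic signs of the coderivation formalism, and checking that the graded Jacobi identity (respectively coassociativity together with $b^2=0$) is applied with the correct Koszul signs so the cancellations are exact rather than merely formal. I would therefore fix all conventions once via the shift $[1]$, verify the two structural identities $[m_2,[M,M]]=2[[m_2,M],M]$ and $Fb_m=b_{m'}F$ at the level of coderivations, and read off both cocycle statements from them, avoiding any per-term sign chase in the final write-up.
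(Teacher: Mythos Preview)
The paper does not actually prove this lemma; it simply states that the result ``is well-known and can be proved by a direct calculation.'' Your proposal via the bar construction and coderivations is precisely the standard way to organize that direct calculation, and the argument you outline is correct.

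One small caveat worth noting for part (1): writing $\Phi_k$ as the degree-$(k+1)$ part of $\tfrac{1}{2}[M,M]$ and invoking graded Jacobi introduces spurious denominators and a ``$3[M,[M,M]]=0$''-type identity that is awkward over a general field $k$. The cleaner version of exactly your idea avoids this: take $b=m_2+M$ as an odd coderivation on $T$, set $c=b\circ b$ (the Gerstenhaber circle product, equivalently the cochain underlying $\hat{b}^2$), and use the tautological identity $[\hat{b},\hat{b}^2]=0$, i.e.\ $[b,c]=0$, which holds over any ring with no Jacobi or $\tfrac{1}{2}$ needed. Under the hypothesis $c_j=0$ for $j\le k$, the order-$(k+2)$ component of $[b,c]=0$ collapses (since $m_1=0$) to $m_2\circ c_{k+1}-c_{k+1}\circ m_2=0$, which is exactly $d\Phi_k=0$. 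Your argument for part (2) via $b_{m'}D+Db_m=0$ is already characteristic-free.
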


The following lemma is a direct consequence of the $k^{th}$ $A_{\infty}\text{-}$constraint (\ref{ainff}).
\begin{lemma}\label{perturb_m_n}Let $m$ and $m'$ be two $A_{\infty}$-structures on a graded category $\cA.$
 Let $f:(\cA,m)\to (\cA,m')$ be an
$A_{\infty}\text{-}$homomorphism with $f_1=\id,$ and $f_i=0$ for
$1<i< k-1.$ Then $m_i=m_i'$ for $i<k$ and $d f_{k-1}=m'_k-m_k.$
\end{lemma}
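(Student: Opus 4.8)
\textbf{Proof plan for Lemma~\ref{perturb_m_n}.}
The plan is to extract exactly what the $k^{th}$ $A_\infty$-functor constraint \eqref{ainff} says when the homotopy $f$ is as sparse as possible, namely $f_1=\id$ and $f_i=0$ for all $i$ with $1<i<k-1$. First I would record the constraints in increasing order. For each order $r+1$ with $r<k$, the right-hand side of \eqref{ainff} and the composition terms on the left all involve only the components $f_{<k-1}$, which by hypothesis reduce to $f_1=\id$. I would argue by induction on $r$ that these low-order constraints force $m_r=m_r'$: with $f_1=\id$ the identity \eqref{ainff} at order $r$ collapses (after the Koszul signs are matched) to the bare equation $m_r'=m_r$, because every term containing some $f_{u_j}$ with $u_j>1$ vanishes, and the two surviving terms are precisely $m_r'$ on the left and $m_r$ on the right. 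This yields $m_i=m_i'$ for all $i<k$, the first assertion.

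For the second assertion I would isolate the order-$k$ constraint \eqref{ainff}, which is the first one that can involve $f_{k-1}$ (since $f_i=0$ for $1<i<k-1$, the smallest nontrivial component available is $f_{k-1}$). This is exactly the situation encoded in \eqref{Psi}: the order-$k$ constraint can be rewritten as $df_{k-1}=\Psi_{k-1}'(\dots)+m_k'-m_k$, where the Hochschild differential $df_{k-1}$ collects the terms in which $f_{k-1}$ is composed with a single $m_2$ (on either side), and $\Psi_{k-1}'$ gathers all remaining terms. Under our vanishing hypotheses $\Psi_{k-1}'$ depends only on $f_2,\dots,f_{k-2}$ together with the structure maps $m_{<k},m'_{<k}$; since all the intermediate $f_i$ vanish and $m_i=m_i'$ for $i<k$ by the first part, every monomial in $\Psi_{k-1}'$ contains a factor that is either a vanishing $f_i$ or a cancelling difference $m_i-m_i'$, so $\Psi_{k-1}'=0$. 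The constraint therefore reduces to $df_{k-1}=m_k'-m_k$, as claimed.

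The main obstacle, and the only place genuine care is needed, is the bookkeeping of the Koszul signs $\varepsilon=(r-1)(u_1-1)+(r-2)(u_2-1)+\cdots$ on the left of \eqref{ainff} and the signs $(-1)^{s+lt}$ on the right, to confirm that when all higher $f$-components are set to zero and $f_1=\id$ the surviving terms are $+m_r'$ and $-m_r$ (rather than carrying spurious signs) at each order, and likewise that the single-composition terms at order $k$ assemble precisely into the Hochschild differential $df_{k-1}$ with the sign conventions fixed in the definition of $d$. I expect this to be a direct but slightly tedious verification: one substitutes $u_j\in\{1,k-1\}$ in $\varepsilon$, notes that each $u_j=1$ contributes nothing to $\varepsilon$, and checks that the two composition slots for $f_{k-1}$ with $m_2$ reproduce the two boundary terms of $d$. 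Once the signs are matched the statement follows immediately, which is why the lemma is asserted to be ``a direct consequence of the $k^{th}$ $A_\infty$-constraint.''
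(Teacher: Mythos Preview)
Your proposal is correct and is exactly the unpacking of the paper's one-line justification (``a direct consequence of the $k^{\mathrm{th}}$ $A_\infty$-constraint''): you run the constraints \eqref{ainff} at orders $r<k$ to get $m_r=m_r'$, then read off the order-$k$ constraint as $df_{k-1}=m_k'-m_k$. One small inaccuracy worth fixing: the vanishing of $\Psi_{k-1}'$ is not due to ``cancelling differences $m_i-m_i'$'' but simply to the fact that every surviving term in $\Psi_{k-1}'$ contains some $f_i$ with $2\le i\le k-2$, which is zero by hypothesis; the equality $m_i=m_i'$ for $i<k$ is used only to identify $m_2=m_2'$ so that the boundary terms on both sides assemble into the single Hochschild differential $df_{k-1}$.
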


\begin{proof}[Proof of Proposition \ref{MC_solutions}.]
 We define the desired surjection as follows. Let $\phi\in HH^2(\cA)^{-l}$ be some class,
and $\widetilde{\phi}\in CC^2(\cA)^{-l}$ its representative. Consider the partial $A_{\infty}$-structure $(m_3,\dots,m_{l+2})$ with
$$
m_{l+2}=\widetilde{\phi},\quad m_3=\dots=m_{l+1}=0.
$$
The maps $m_{\leq l+2}$ satisfy all the required equations (\ref{ainf})
which do not involve $m_{> l+2}$ (there is only one nontrivial such equation, $d m_{l+2}=0$).
By induction on $k,$ the equation
$$
d m_k=\Phi_k(m_3,\dots,m_{k-1})
$$
has a solution for each $k> l+2,$ since
we know from part (1) of Lemma \ref{obstr_m_n} that $d\Phi_k=0$  and from condition \eqref{no_obstr} that $HH^3(\cA)^j=0$ when $j<-l.$
This means that $(m_3,\dots,m_{l+2})$ lifts to some $A_{\infty}$-structure $m^{\widetilde{\phi}}$ on $\cA.$

Moreover, by condition \eqref{no_obstr1} we have  $HH^2(\cA)^{j}=0$ when $j<-l,$ and by Lemma~\ref{obstr_m_n}\,(2) we know that $d\Psi_k=0.$
This implies
 that the equation \eqref{Psi} can be solved for all $k>l+1,$ i.e.\ the lift
is unique up to strict homotopy. Finally, similar considerations and  Lemma \ref{perturb_m_n} give that the resulting element
$m^{\widetilde{\phi}}\in A_{\infty}S(\cA)$ depends only on $\phi,$ not on $\widetilde{\phi}.$

Therefore, the map $HH^2(\cA)^{-l}\to A_{\infty}S(\cA)$ is well-defined. Now we show that it is surjective. Let us consider an $A_{\infty}\text{-}$structure
$m'$ on $\cA$ and  let us take some $A_{\infty}\text{-}$structure $m^{\widetilde{\phi}}$
with $m_3=\dots=m_{l+1}=0$ and $m_{l+2}=\widetilde{\phi}$ as above.
By condition \eqref{no_obstr1} $HH^{2}(\cA)^{j}=0$ for all $j\le -1$ and $ j\ne l.$ Hence by (2)
of Lemma \ref{obstr_m_n} we can construct a strict homotopy $f$
between $m'$ and $m^{\widetilde{\phi}}$ if and only if the expression $\Psi_{l+1}$ from (\ref{Psi})
is exact. Since $\Psi_{l+1}$ depends linearly on $m_{l+2},$
we can find $\widetilde{\phi}$ such that the class of $\Psi_{l+1}$ in the cohomology group $HH^2(\cA)^{-l}$
vanishes; hence, for this choice of $\widetilde{\phi},$ the $A_\infty$-structure $m'$ will be strictly homotopic to $m^{\widetilde{\phi}}.$
This completes the proof of the proposition.\end{proof}

\section{A classification of $A_{\infty}\text{-}$structures}\label{s:classif}

In this section we describe all $A_\infty\text{-}$structures on the category $A_{(p,q)}.$
The main technical result of this section is the following proposition:

\begin{prop}\label{classification}
Let  $A$ be the category with $n\ge 3$ objects defined by \eqref{eq:defA}. Then

\begin{itemize}
\item[(1)] For any two elements $a,b\in \kk,$ there exists a $\Z/2\text{-}$graded $A_{\infty}\text{-}$structure
$m^{a,b}$ on $A,$ compatible with
all\/ $\Z\text{-}$gradings satisfying \eqref{cond}, such that $m^{a,b}_3=\cdots=m^{a,b}_{n-1}=0$ and
$$
\qquad m^{a,b}_n(u_{i-1,i},u_{i-2,i-1},\dots,u_{i,i+1})(0)=a,\quad
m^{a,b}_n(v_{i+1,i},v_{i+2,i+1},\dots,v_{i,i-1})(0)=b
$$
for any $1\le i\le n,$ where $\cdot(0)$ means the constant
coefficient of an element of $A(X_i,X_i),$ i.e.\ the coefficient of\/ $\id_{X_i}.$
\item[(2)] Moreover, for any $\Z\text{-}$grading $A_{(p,q)}$ where $(p,q)$ satisfy \eqref{cond}, the map
$$
\kk^2\to A_{\infty}S(A_{(p,q)}),\quad (a,b)\mapsto m^{a,b},
$$
is a bijection, i.e.\ any $A_{\infty}\text{-}$structure $m$ on $A_{(p,q)}$ is strictly homotopic to $m^{a,b}$ with
\begin{align}\label{a,b}
a=m_n(u_{n,1},u_{n-1,n},\dots,u_{1,2})(0),\quad
b=m_n(v_{2,1},v_{3,2},\dots,v_{1,n})(0).
\end{align}
\end{itemize}
\end{prop}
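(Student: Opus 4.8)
The strategy is to apply Proposition~\ref{MC_solutions} to the category $\cA = A_{(p,q)}$ with $l = n-2$. The crucial input is therefore the computation of the relevant bigraded Hochschild cohomology groups: I would show that $HH^2(A_{(p,q)})^j = 0$ for $j \le -1$ with $j \ne -(n-2)$, that $HH^3(A_{(p,q)})^j = 0$ for $j < -(n-2)$, and that $HH^2(A_{(p,q)})^{-(n-2)}$ is two-dimensional, spanned by classes whose constant coefficients on the cyclic words $u_{i-1,i}\otimes\cdots\otimes u_{i,i+1}$ and $v_{i+1,i}\otimes\cdots\otimes v_{i,i-1}$ are the parameters $a$ and $b$. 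Granting this, part (1) follows from the first assertion of Proposition~\ref{MC_solutions} (the existence of $m^\phi$ with $m_3 = \cdots = m_{l+1} = 0$), and the observation that with $l+2 = n$ the class of $m_n$ in $HH^2$ is exactly $\phi$; one then checks that, by the grading constraint \eqref{cond} (namely $p_1+\cdots+p_n = q_1+\cdots+q_n = n-2$), the degree of a length-$n$ composition of the $u$'s or $v$'s lands precisely in the degree-zero part of $A(X_i,X_i)$, so extracting the $\id_{X_i}$-coefficient makes sense and recovers $(a,b)$. Part (2) is then the surjectivity-and-injectivity statement in Proposition~\ref{MC_solutions}: surjectivity is immediate, and injectivity amounts to checking that the map $k^2 \to HH^2(A_{(p,q)})^{-(n-2)}$, $(a,b) \mapsto \phi_{a,b}$, is itself a bijection, which is the content of the cohomology computation.

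\textbf{Carrying out the Hochschild computation.}
This is the technical heart, and I expect it to be the main obstacle. The approach is to exploit the very rigid combinatorial structure of $A$: it is built from the "necklace" quiver with vertices $X_1,\dots,X_n$ arranged cyclically, arrows $u_{i,i+1}$, $v_{i+1,i}$, and loops $x_i, y_i$, subject to the relations $x_iy_i = 0$, $v_{i,i-1}\circ u_{i-1,i} = x_i$ (up to the power bookkeeping), $u_{i-1,i}\circ v_{i,i-1} = y_{i-1}$, and vanishing of all "long" paths. A Hochschild cochain is determined by its values on tensor products of paths, and the internal degree $j$ is the (negative of the) total polynomial degree deficit. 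I would argue that for a cochain to have very negative internal degree it must be supported on long cyclic words in the $u$'s (or $v$'s) with output a low-degree element of an endomorphism algebra; a direct bookkeeping with the degrees $p_i, q_i$ (all odd, summing to $n-2$) shows the only cyclic words that can contribute in degree $-(n-2)$ are the full-length ones $u_{i-1,i}\otimes\cdots\otimes u_{i,i+1}$ and $v_{i+1,i}\otimes\cdots\otimes v_{i,i-1}$, landing in the constant term of $A(X_i,X_i)$, and that the cyclic symmetry of the Hochschild complex forces all $n$ rotations to carry the same coefficient — giving exactly two free parameters. For $HH^3$ in degrees $< -(n-2)$ and for $HH^2$ in other negative degrees one shows the corresponding cochain complexes are acyclic; here the key point is that any putative cochain of such low degree is forced by the degree constraint to be a coboundary of an explicitly constructed lower cochain (or to vanish outright), using that the relations $x_iy_i=0$ kill the "mixed" monomials and the path algebra is otherwise a free bimodule on the $u$'s and $v$'s.

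\textbf{Assembling the conclusion.}
Once the cohomology is in hand, I would invoke Proposition~\ref{MC_solutions} with $l = n-2$ to get, for each $\phi \in HH^2(A_{(p,q)})^{-(n-2)}$, an $A_\infty$-structure $m^\phi$ with $m_3 = \cdots = m_{n-1} = 0$, unique up to strict homotopy, and with $[m_n] = \phi$. Writing $\phi = \phi_{a,b}$ via the basis identified above and setting $m^{a,b} := m^{\phi_{a,b}}$ proves (1), after the elementary check that the normalization $m^{a,b}_n(u_{i-1,i},\dots,u_{i,i+1})(0) = a$ and the $v$-analogue hold — this is just the statement that the representative $\widetilde{\phi_{a,b}}$ can be (and is) chosen to be the cyclically-symmetric cochain with those constant coefficients and no other nonzero values, which is legitimate because any two representatives differing by a coboundary give strictly homotopic structures. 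Finally (2): surjectivity of $k^2 \to A_\infty S(A_{(p,q)})$ is the surjectivity in Proposition~\ref{MC_solutions} composed with the surjection $k^2 \twoheadrightarrow HH^2(A_{(p,q)})^{-(n-2)}$; and injectivity follows because two structures $m^{a,b}$, $m^{a',b'}$ that are strictly homotopic have, by Lemma~\ref{perturb_m_n} applied inductively (all lower $m_i$ agree and equal $0$, so the first possibly-nonzero discrepancy is at $m_n$, governed by $df_{n-1}$), equal classes $[m_n] = [m_n']$ in $HH^2(A_{(p,q)})^{-(n-2)}$, whence $(a,b) = (a',b')$ by the bijectivity of the basis identification. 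The formula \eqref{a,b} for recovering $(a,b)$ from an arbitrary $m$ then follows by reading off the constant coefficients, which are homotopy-invariant by the same argument.
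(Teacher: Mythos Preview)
Your strategy is exactly the paper's: reduce to Proposition~\ref{MC_solutions} with $l=n-2$, feed in the Hochschild computation (Lemma~\ref{Hoch_comp}), and for injectivity verify directly that the constants in \eqref{a,b} are homotopy invariants.

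Two places where your sketch is weaker than the paper's argument. First, the Hochschild computation: your ``direct bookkeeping with degrees'' and ``explicitly constructed lower cochain'' is not yet a proof. The paper filters the reduced bar complex by the total word-length in the generators, shows (Lemma~\ref{trivial}, via a cone argument) that the subcomplexes $K_i^\bullet(d)$ are acyclic for $d>1$, and then runs the resulting spectral sequence; the $E_1$ page collapses to short three-term complexes indexed by $r\equiv 0,\pm 1\pmod n$, whose cohomology is computed by hand. Your intuition that only the full-length cyclic words in $u$'s or $v$'s survive, with a single free coefficient each, is precisely the output of that spectral sequence, but a naive degree count alone will not get you there.

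Second, your appeal to Lemma~\ref{perturb_m_n} for injectivity is not legitimate as stated: that lemma has the \emph{hypothesis} $f_i=0$ for $1<i<k-1$, which an arbitrary strict homotopy between $m^{a,b}$ and $m^{a',b'}$ need not satisfy, so you cannot conclude $m_n'-m_n=df_{n-1}$ directly. The paper instead checks (and calls it ``straightforward'') that the specific coefficients $m_n(u_{n,1},\dots,u_{1,2})(0)$ and $m_n(v_{2,1},\dots,v_{1,n})(0)$ are unchanged under any strict homotopy; the point is that in the $A_\infty$-functor relation at order $n$, evaluated on the cyclic $u$-word (resp.\ $v$-word), every term other than $m_n'-m_n$ either involves some $m_j$ or $m_j'$ with $3\le j\le n-1$ (hence vanishes), or involves $m_2$ of two consecutive $u$'s (which is zero), or lands in a morphism space whose degree-zero part is zero. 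This is the argument you should give instead.
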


The proof of this proposition essentially reduces to the computation of the Hochschild cohomology of $A_{(p,q)}.$

\begin{lemma}\label{Hoch_comp}Let $A_{(p,q)}$ be the $\Z\text{-}$graded category with $n\ge 3$
objects as in Definition \ref{categ}. Then the bigraded Hochschild
cohomology of $A_{(p,q)}$ is the following:
$$
HH^{d}(A_{(p,q)})^j\cong\begin{cases}
\kk^2 & \text{ for each }d\ge 2\quad \text{when}\quad  j=\left\lfloor\dfrac{d}2\right\rfloor(2-n),\\
0 & \text{ in all other cases when }\quad d-j\ge 2.
\end{cases}
$$
\end{lemma}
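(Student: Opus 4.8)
The plan is to compute the Hochschild cohomology of $A_{(p,q)}$ by exploiting the fact that $A$ is (the linearization of) a quiver algebra with relations, so that $HH^\bullet(A)$ can be computed from an explicit projective bimodule resolution. First I would replace the $k$-linear category $A$ by the associated algebra $\Lambda = \bigoplus_{i,j} A(X_i,X_j)$, which is Morita equivalent (so has the same Hochschild cohomology), and observe that $\Lambda$ is a quotient of the path algebra of the cyclic "gentle/Brauer-type" quiver with vertices $1,\dots,n$, loops $x_i,y_i$ at each vertex, and arrows $u_{i-1,i}$, $v_{i,i-1}$, modulo the relations $x_iy_i$ (=$y_ix_i$), the identifications $x_{i+1}u_{i-1,i} = u_{i-1,i}y_i$ and $y_{i-1}v_{i,i-1}=v_{i,i-1}x_i$, the "untwisting" relations $u_{i-1,i}v_{i,i-1} = x_i$, $v_{i,i-1}u_{i-1,i} = y_{i-1}$, and the vanishing of all other length-two paths. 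This presentation makes $\Lambda$ a monomial-type (in fact, up to the obvious change of generators $x_i = u v$, a "zigzag"/Brauer graph) algebra, and I would write down the minimal bimodule resolution of $\Lambda$ directly: generators in homological degree $0$ the vertices, in degree $1$ the arrows, in degree $2$ the relations, and then a $2$-periodic pattern for $d\ge 2$ coming from the (anti)symmetry $x_i\leftrightarrow y_i$ of the relations, exactly as for the "almost Calabi–Yau" or preprojective-type algebras.

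Next I would apply $\Hom_{\Lambda\text{-}\Lambda}(-,\Lambda)$ to this resolution and take cohomology, keeping track of the internal $\Z$-grading determined by $(p,q)$. The key structural input is that each generator in homological degree $d$ contributes a cochain that, after imposing the cocycle and coboundary conditions, survives only when it is "closed up around the cycle": a degree-$d$ Hochschild cocycle of the relevant internal degree corresponds to assigning scalars to the two cyclic words of length $n$ — the all-$u$ word $u_{i-1,i}u_{i-2,i-1}\cdots u_{i,i+1}$ and the all-$v$ word — and everything else is either a coboundary or forced to vanish. This is where the arithmetic condition \eqref{cond}, $p_1+\cdots+p_n = q_1+\cdots+q_n = n-2$, enters: it pins the internal degree of these two surviving cyclic classes to $\lfloor d/2\rfloor(2-n)$ and guarantees there are no other classes in internal degrees $j$ with $d-j\ge 2$ by a degree-counting argument (the only potentially problematic contributions are the constant terms $\cdot(0)$ landing on $\id_{X_i}$, and their degree is exactly $n-2$ per loop around the cycle). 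So the computation splits as $HH^d(A_{(p,q)})^j \cong k^2$ in the single internal degree $\lfloor d/2\rfloor(2-n)$ for $d\ge 2$, and $0$ otherwise in the range $d-j\ge 2$; the range restriction $d-j\ge 2$ is exactly what is needed for Proposition \ref{MC_solutions} and for the classification, so I would not worry about $HH$ outside it.

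The main obstacle I anticipate is bookkeeping rather than conceptual: getting the minimal bimodule resolution of $\Lambda$ correct past homological degree $2$ — in particular verifying the claimed $2$-periodicity and the precise internal-degree shifts of the differentials — and then carefully identifying which cochains are cocycles versus coboundaries. The cleanest way to control this is probably to exploit the $\Z/n$ cyclic symmetry of $A$ (rotation $i\mapsto i+1$) to reduce everything to a local computation at a single vertex plus a "gluing around the cycle" step, and to use the internal grading aggressively: in each bidegree $(d,j)$ with $d-j\ge2$, the number of monomials of that degree in $\Lambda$ and in the resolution terms is small, so one can just enumerate. Once the resolution is in hand, the identification of the two generators of $HH^d$ with the functionals $m_n(u,\dots,u)(0)$ and $m_n(v,\dots,v)(0)$ is immediate by tracing through the dual basis, which is precisely what Proposition \ref{classification} needs. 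I would present the resolution explicitly for small homological degrees, state the periodic pattern, and relegate the verification of the higher differentials to a direct check.
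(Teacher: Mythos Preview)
Your approach via a minimal projective bimodule resolution is a legitimate alternative to the paper's method, and for this algebra it would work, but several details in your proposal are off and would have to be corrected before the computation goes through.

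First, the presentation. The loops $x_i,y_i$ are not independent generators: by the very definition of composition in $A$ one has $x_i=u_{i-1,i}\circ v_{i,i-1}$ and $y_{i-1}=v_{i,i-1}\circ u_{i-1,i}$. The \emph{minimal} quiver therefore has only the $2n$ arrows $u_{i-1,i}$ and $v_{i,i-1}$, and the relation ideal is generated by the $2n$ quadratic monomials $u_{i,i+1}u_{i-1,i}$ and $v_{i-1,i-2}v_{i,i-1}$. So $\Lambda$ is an honest monomial (in fact gentle) algebra, and the Bardzell/Anick resolution is completely explicit: the generators of $P_d$ are the all-$u$ and all-$v$ words of length $d$. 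This is \emph{not} $2$-periodic in homological degree; there are $2n$ generators in every degree $d\ge 1$, and the periodicity that matters is the length-$n$ periodicity of going once around the cycle of vertices. Your ``2-periodic pattern coming from $x_i\leftrightarrow y_i$'' is not the right picture.

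Second, once you have the correct resolution, the cochain spaces $\Hom_{\Lambda\text{-}\Lambda}(P_r,\Lambda)$ are governed by $e_{i\pm r}\Lambda e_i$, which is nonzero only when $r\equiv 0,\pm 1\pmod n$; this is the mechanism that forces the surviving classes to be supported on full cyclic words. You allude to this (``closed up around the cycle'') but do not actually carry out the cocycle/coboundary analysis, and that short computation---together with the degree bookkeeping using \eqref{cond}---is the entire content of the lemma.

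For comparison, the paper works directly with the reduced bar complex rather than importing a minimal resolution. It decomposes $A_{red}$ into commuting pieces $A_i=\langle u_{i-1,i},v_{i,i-1}\rangle$, introduces the length filtration, and proves (Lemma~\ref{trivial}) that the bar subcomplexes $K_i(d)$ are acyclic for $d>1$; this is precisely the statement that the bar resolution collapses to the Bardzell resolution. The resulting spectral sequence has $E_1$-page supported only at $r\equiv 0,\pm 1\pmod n$, and its $d_1$-differential is exactly your Bardzell cochain differential; the $E_2$-computation is the cocycle/coboundary step you left as ``bookkeeping.'' So the two routes converge almost immediately: the paper's has the advantage of being self-contained (no appeal to Anick/Bardzell), while yours would be slightly shorter once the resolution is written down correctly.
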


\begin{proof}We have a subcomplex
\begin{equation}
\label{red}
CC^{\bdot}_{red}(\cA)^{\bdot}\subset CC^{\bdot}(\cA)^{\bdot},
\end{equation}
the so-called reduced Hochschild complex, which consists of cochains
that vanish on any sequence of morphisms containing some identity
morphism. It is classically known that the inclusion \eqref{red} is
a quasi-isomorphism. We will compute Hochschild cohomology using the
reduced Hochschild complex. For convenience, we will write just $A$
instead of $A_{(p,q)}.$ Let
$$
\widetilde{A}=\bigoplus_{\substack{i,j}}A(X_i,X_j).
$$
This is a graded algebra. We have a non-unital graded algebra
$$
A_{red}:=\ker\Big(\bigoplus_{\substack{i,j}}A(X_i,X_j)\to\bigoplus_{\substack{i}}\kk\cdot\id_{X_i}\Big).
$$

Let $R=\bigoplus_{\substack{i}}\kk\cdot\id_{X_i}.$ Then both $A_{red}$ and $\widetilde{A}$ are $R\text{-}R\text{-}$bimodules, and
$$
CC^{k+l}_{red}(A)^l=\Hom_{R-R}^l({A_{red}}^{{\otimes\!}_R k},\widetilde{A}),\quad  k\ge 0.
$$

Denote by $A_i\subset A_{red}$ the subalgebra generated by $u_{i-1,i}$ and $v_{i,i-1}.$ Then we have an isomorphism
$$
A_{red}\cong\bigoplus_{\substack{i}}A_i
$$
of non-unital graded algebras (because $A_i\cdot A_j=0$ for $i\ne
j$).

Consider the bar complex of $R\text{--}R\text{-}$bimodules
$$
K_i^{\bdot}=\overline{T}(s A_i)=\bigoplus_{m>0} (s A_i)^{{\otimes\!}_R
m},
$$
where $(sA_i)^p=(A_i)^{p+1}$
and  the differential is
the bar differential
$$
D(sa_k\otimes\cdots\otimes sa_{1})=\sum_{i=1}^{k-1}  (-1)^{\epsilon_i} sa_k\otimes\cdots\otimes sa_{i+1} a_{i}\otimes\cdots sa_1
$$
with $\epsilon_i=\sum_{j\le i}\deg sa_j.$

Denote by $A_i(d)\subset A_i,$ $d>0,$ the 2-dimensional subspace generated by
the two products of $u_{i-1,i}$ and $v_{i,i-1}$ of length $d,$ i.e.\ $A_i(2m+1)$ is generated by $x_i^m u_{i-1,i}$ and $v_{i,i-1} x_i^m$
while $A_i(2m)$ is generated by $x_i^m$ and $y_{i-1}^{ m}.$
Consider the subcomplex
$$
K_i^{\bdot}(d)\subset K_i^{\bdot},\quad K_i^{\bdot}(d)=\bigoplus_{\substack{d_1+\dots+d_l=d,\\
l>0}}sA_i(d_1)\otimes_R sA_i(d_2)\otimes_R\dots\otimes_R sA_i(d_l).
$$

\begin{lemma}\label{trivial}$K_i(1)\cong sA_i(1),$ and for $d>1$ the complex $K_i^{\bdot}(d)$ is acyclic.\end{lemma}
\begin{proof}
The result is obvious for $d=1.$ For $d\ge 2,$
we subdivide the complex $K_i(d)$ into two parts, according to whether $d_l=1$ or $d_l>1.$ The first part is
$K_i(d-1)\otimes_R sA_i(1).$
We also note that the product map $A_i(d_l-1)\otimes_R A_i(1)\to A_i(d_l)$ is an isomorphism. Hence the second part of
the complex is isomorphic to
$K_i(d-1)\otimes_R A_i(1).$ Using these identifications, we conclude that
$K_i(d)$ is isomorphic to the total complex of the bicomplex $K_i(d-1)\otimes_R A_i(1)\to K_i(d-1)\otimes_R A_i(1),$ where the connecting map
is the identity map. It is therefore acyclic.
\end{proof}

Now let
$$
K^{\bdot}= T(s A_{red})=\bigoplus_{\substack{m\ge 0}}(sA_{red})^{{\otimes\!}_R m}.
$$
We have an isomorphism of graded vector spaces
$$
K^{\bdot}=R\oplus \bigoplus_{\substack{w> 0\\ i_t\ne i_{t+1}}}K_{i_1}^{\bdot}\otimes_R\dots\otimes_R K_{i_w}^{\bdot},
$$
which is also an isomorphism of complexes because $A_i\cdot A_j=0$ for $i\ne j.$ Define subcomplexes
$$
 K^{\bdot}(0)=R,\quad K^{\bdot}(d)=\bigoplus_{\substack{w>0\\
d_1+\dots+d_w=d,\\i_t\ne i_{t+1}}}K_{i_1}^{\bdot}(d_1)\otimes_R\dots\otimes_R K_{i_w}^{\bdot}(d_w)\ \text{ for }\ d\ge 1.
$$
Consider the full decreasing filtration
$$
CC^{\bdot}(A)^{\bdot}_{red}=L_1^{\bdot}(A)^{\bdot}\supset L_2^{\bdot}(A)^{\bdot}\supset\dots,
$$
where $L_r^{\bdot}(A)^{\bdot}$ consists of all cochains vanishing on $K^{\bdot}(i)$ for $0\le i<r.$

Denote by $\Gr_r^{\bdot}(A)^{\bdot}=L_r^{\bdot}(A)^{\bdot}/L_{r+1}^{\bdot}(A)^{\bdot}$
the associated graded factors of this filtration.
The Hochschild differential $d$ induces a differential
$$
d_0: \Gr_r^{\bdot}(A)^{\bdot}\to
\Gr_r^{\bdot+1}(A)^{\bdot}.
$$
It is easy to see that $d_0$ coincides with a differential defined by the bar differential $D$ on $K^{\bdot}.$
Therefore, Lemma \ref{trivial} implies that for $r\ge 1$ we have
\begin{multline*}
H^{r+j}(\Gr_r^{\bdot}(A)^{\bdot})^j=\\
\Hom^j_{R-R}\Big(\bigoplus_{\substack{t\in \Z/n\Z}}\kk\cdot u_{t-1,t}\otimes\dots\otimes u_{t-r,t-r+1}
\ \oplus \bigoplus_{\substack{t\in \Z/n\Z}}\kk\cdot v_{t+1,t}\otimes\dots\otimes v_{t+r,t+r-1},\;\widetilde{A} \Big),
\end{multline*}
 and
\begin{align*}
&H^{i+j}(\Gr_r^{\bdot}(A)^{\bdot})^j=0\quad \text{ for }\quad i\ne r.
\end{align*}

The first differential
$$
d_1:H^{r+j}(\Gr_r^{\bdot}(A)^{\bdot})^j\to H^{r+j+1}(\Gr_{r+1}^{\bdot}(A)^{\bdot})^j
$$
in the spectral sequence $E_1^{r, j}=H^{r+j}(\Gr_r^{\bdot}(A)^{\bdot})^j$
is given by the formula
$$\begin{cases}\ d_1\phi(u_{t-1,t},u_{t-2,t-1},\dots,u_{t-r-1,t-r})=&\pm u_{t-1,t}\phi(u_{t-2,t-1},\dots,u_{t-r-1,t-r})\\
&\pm\phi(u_{t-1,t},\dots,u_{t-r,t-r+1})u_{t-r-1,t-r},\\
\ d_1\phi(v_{t+1,t},v_{t+2,t+1},\dots,v_{t+r+1,t+r})\,=&\pm v_{t+1,t}\phi(v_{t+2,t+1},\dots,v_{t+r+1,t+r})\\
&\pm \phi(v_{t+1,t},\dots,v_{t+r,t+r-1})v_{t+r+1,t+r}.\end{cases}
$$
It is clear that $H^{r+j}(\Gr_r^{\bdot}(A)^{\bdot})^j\ne 0$ only for
$r\equiv 0,\pm 1\text{ mod }n$
and the spectral sequence $(E_1^{\bdot,\bdot}, d_1)$ consists of the following simple complexes
\begin{multline}\label{e1}
0\to H^{mn+j-1}(\Gr_{mn-1}^{\bdot}(A)^{\bdot})^j
\to H^{mn+j}(\Gr_{mn}^{\bdot}(A)^{\bdot})^j\\
\to H^{mn+j+1}(\Gr_{mn+1}^{\bdot}(A)^{\bdot})^j
\to 0.
\end{multline}

Let $m>0.$ 
%
%
%
%
Now, 
if $j\ne m(2-n),$ then the complexes \eqref{e1} are acyclic. If $j=m(2-n),$
then the complex \eqref{e1} 
has only two nontrivial terms and is
 \begin{multline*}\label{explicit}
0\to \Hom_{R-R}\Big(\bigoplus_{\substack{t\in \Z/n\Z}}\kk\cdot u_{t-1,t}\otimes\dots\otimes u_{t-mn,t-mn+1}\oplus
 \bigoplus_{\substack{t\in \Z/n\Z}}\kk\cdot v_{t+1,t}\otimes\dots\otimes v_{t+mn,t+mn-1},R\Big)\\
 \to
\Hom_{R-R}\Big(\bigoplus_{\substack{t\in \Z/n\Z}}\kk\cdot u_{t-1,t}\otimes\dots\otimes u_{t-mn-1,t-mn}\oplus
 \bigoplus_{\substack{t\in \Z/n\Z}}\kk\cdot v_{t+1,t}\otimes\dots\otimes v_{t+mn+1,t+mn},\\\bigoplus_{\substack{t\in \Z/n\Z}}\kk\cdot u_{t-1,t}
 \oplus \bigoplus_{\substack{t\in \Z/n\Z}}\kk\cdot v_{t+1,t}\Big)  \to 0.
 \end{multline*}

Thus, the computation of the cohomology of $d_1$ reduces to an easy computation of the kernel and the cokernel of this map.
For $m>0$ we obtain that the cohomology of $d_1$ is the following:
\bigskip

\begin{tabular}{ll}
$H^{2m}(E_1^{\bdot,\bdot},d_1)^{m(2-n)}\cong \kk^2,$ & $\phi^{a,b}(u_{i-1,i},u_{i-2,i-1},\dots,u_{i,i+1})=a\cdot\id_{X_i},$\\
& $\phi^{a,b}(v_{i+1,i},v_{i+2,i+1},\dots,v_{i,i-1})=b\cdot\id_{X_i},\quad a,b\in \kk,$\\
$H^{2m+1}(E_1^{\bdot,\bdot},d_1)^{m(2-n)}\cong \kk^2,$ & $\psi^{c,d}(u_{i-1,i},u_{i-2,i-1},\dots,u_{i-1,i})=\delta_{i1}\cdot c\cdot u_{i-1,i},$\\
& $\psi^{c,d}(v_{i+1,i},v_{i+2,i+1},\dots,v_{i+1,i})=\delta_{i1}\cdot d\cdot v_{i+1,i},\;\; c,d\in \kk,$\\
$H^{i+j}(E_1^{\bdot, \bdot},d_1)^j=0$ & in all other cases with   $i\geq 2.$
\end{tabular}
\bigskip

It is easy to see that the spectral sequence degenerates at the $E_2^{\bdot,\bdot}$ term, i.e.\ all these classes
can be lifted to actual
Hochschild cohomology classes. This proves Lemma \ref{Hoch_comp}.
\end{proof}

\begin{proof}[Proof of Proposition \ref{classification}]
Part (1) directly follows from Lemma \ref{Hoch_comp} and Proposition \ref{MC_solutions}.

Lemma \ref{Hoch_comp} and Proposition \ref{MC_solutions} also imply that the map $(a,b)\mapsto m^{a,b}$ is
a surjection on $A_{\infty}S(A_{p,q}).$
Further, it is straightforward to check that the coefficients \eqref{a,b} are invariant under strict homotopy.
This proves part (2) of the proposition.\end{proof}

\begin{remark}\label{rm:structures}{\rm
Note that autoequivalences of the graded category
$A_{p,q}$ act on the set of $A_{\infty}\text{-}$structures $A_{\infty}S(A_{p,q}).$
In particular, it is easy to see that all $A_{\infty}\text{-}$structures $m^{a,b}$ with $a\ne 0,\; b\ne 0$ yield equivalent $A_{\infty}\text{-}$categories,
all of them quasi-equivalent to $m^{1,1}.$
We also have three degenerate $A_{\infty}\text{-}$categories defined by $m^{0,1}, m^{1,0}$ and $m^{0,0},$ where the last-mentioned
 coincides with the category $A_{p,q}$ itself.
}
\end{remark}

\section{The wrapped Fukaya category of $C$}
\label{s:wrapped}

In this section we study the wrapped Fukaya category of $C.$ Recall that the
wrapped Fukaya category of an exact symplectic manifold (equipped with a
Liouville structure) is an $A_\infty$-category whose objects are (graded)
exact Lagrangian submanifolds which are invariant
under the Liouville flow outside of a compact subset. Morphisms and
compositions are defined by considering Lagrangian Floer intersection theory
perturbed by the flow generated by a Hamiltonian function $H$ which is
quadratic at infinity. Specifically, the wrapped Floer complex
$\Hom(L,L')=CW^*(L,L')$ is generated by time 1 trajectories of the
Hamiltonian vector field $X_H$ which connect $L$ to $L',$ or equivalently,
by points in $\phi_H^1(L)\cap L'$; compositions count solutions to a
perturbed Cauchy-Riemann equation. In the specific case of punctured
spheres, these notions will be clarified over the course of the discussion;
the reader is referred to \cite[Sections 2--4]{generate} for a complete
definition (see also \cite{AS} for a different construction).

The goal of this section is to prove the following:

\begin{theo}\label{th:fukaya}
The wrapped Fukaya category of $C$ $($the complement of $n\ge 3$ points
in~$\PP^1)$ is strictly generated by $n$ objects $L_1,\dots,L_n$ such that
$$\bigoplus_{i,j} \Hom(L_i,L_j)\simeq \bigoplus_{i,j} A(X_i,X_j),$$
where $A$ is the category defined in \eqref{eq:defA} $($with any grading
satisfying \eqref{cond}$),$ and the associated
$A_\infty\text{-}$structure is strictly homotopic to $m^{1,1}.$
\end{theo}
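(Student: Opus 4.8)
The strategy is to identify an explicit set of $n$ embedded non-compact exact Lagrangians $L_1,\dots,L_n$ in $C=\PP^1\setminus\{n\text{ pts}\}$, compute their wrapped Floer cohomology and products directly, match the resulting $\Z/2$-graded (in fact $\Z$-graded, for a suitable grading) cohomological category with $A$ from \eqref{eq:defA}, and then invoke Proposition~\ref{classification} to pin down the $A_\infty$-structure up to the two parameters $(a,b)$, finally showing $a\ne0$ and $b\ne0$. Concretely, I would take $C$ with its standard Liouville structure (so that a neighborhood of each puncture is a cylindrical end) and choose the $L_i$ to be properly embedded arcs whose two ends escape into two \emph{adjacent} punctures, arranged cyclically around $\PP^1$ so that $L_i$ and $L_{i+1}$ share exactly one puncture at infinity and $L_i$, $L_j$ with $|i-j|\ge2$ share none. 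Then $L_i\cap L_j=\emptyset$ for $|i-j|\ge 2$, which accounts for the vanishing of $A(X_i,X_j)$ in those cases, and after a small Hamiltonian perturbation $\phi_H^1(L_i)\cap L_j$ consists of one interior intersection point together with an infinite string of intersection points accumulating at the shared puncture, coming from the wrapping; the latter produce the polynomial generators $x_i,y_i$ and $u_{i,i+1},v_{i+1,i}$. The grading is fixed by assigning the generators $u_{i-1,i}$ and $v_{i,i-1}$ the degrees $p_i$, $q_i$ of \eqref{cond} — this is possible precisely because the Maslov index computation for a closed curve in $\PP^1$ through $n$ punctures forces $\sum p_i=\sum q_i=n-2$ (the relevant Euler-characteristic/index bookkeeping on $C$), and oddness of $p_i,q_i$ reflects that these are odd-degree generators of the circle-like local picture at each puncture.

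The computation of morphism spaces and of $m_2$ is a model calculation in dimension $2$: in the cylindrical end near a shared puncture, the wrapped complex $CW^*(L_i,L_i)$ is the Floer cohomology of an arc in $T^*S^1$-like geometry and gives $k[x_i,y_i]/(x_iy_i)$ (two ``half-lines'' of generators, one for each end, meeting only at the unit), while $CW^*(L_i,L_{i+1})$ is a free rank-one module over the relevant polynomial ring in one variable, generated by $u_{i,i+1}$; the product rules $(x_i^ku_{i-1,i})\circ(v_{i,i-1}x_i^l)=x_i^{k+l+1}$ etc.\ are read off by counting the unique immersed holomorphic triangle with the prescribed boundary and corners (an index-zero triangle in a surface, hence rigid and regular), and all other products vanish for index/degree reasons. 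This reproduces the $k$-linear category $A$ on the nose, and simultaneously shows that the cohomological category generated by $L_1,\dots,L_n$ is $A_{(p,q)}$ for the chosen grading.

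Having matched the underlying graded category, Proposition~\ref{classification}(2) tells us the induced $A_\infty$-structure is strictly homotopic to $m^{a,b}$ for some $(a,b)\in k^2$, where by \eqref{a,b} $a$ is the constant term of $m_n(u_{n,1},\dots,u_{1,2})$ and $b$ that of $m_n(v_{2,1},\dots,v_{1,n})$. So the whole theorem reduces to the two \emph{nonvanishing statements} $a\ne0$, $b\ne0$, i.e.\ to exhibiting one nonzero disk count. Geometrically the relevant disk is an $(n{+}1)$-gon whose $n$ incoming corners are the generators $u_{i,i+1}$ cycling once around $\PP^1$ and whose outgoing corner is the identity $\id_{X_1}$: this is a disk bounded by the full cyclic arrangement $L_1\cup\cdots\cup L_n$ that sweeps out one of the two complementary regions of $\PP^1\setminus(L_1\cup\cdots\cup L_n)$. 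I would argue there is exactly one such holomorphic $(n{+}1)$-gon (up to reparametrization) — by the Riemann mapping theorem applied to that complementary disk region, with the $n{+}1$ marked boundary points matching the corners — that it is regular (immersed, index $0$ in a surface), and that it contributes $\pm1$; an orientation/sign check shows the contribution is nonzero in $k$ (and if $\mathrm{char}\,k=2$ this is automatic). The symmetric region gives the $v$-disk and $b\ne0$. \textbf{The main obstacle} is exactly this last point: not the existence of the polygon (that is elementary complex analysis on a surface) but controlling the \emph{signs/orientations} and, more subtly, ruling out additional contributions to $m_n$ on these inputs coming from curves that wrap into the punctures — i.e.\ showing the only rigid polygon with these boundary conditions and output $\id$ is the obvious one, so that no cancellation or extra terms can make $a$ or $b$ vanish. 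This is where one must use the precise Hamiltonian perturbation scheme of \cite{generate} together with an action/energy estimate forcing the relevant polygon to stay in a compact part of $C$, reducing the count to the unperturbed picture where the Riemann mapping argument applies; once that is in place, strict homotopy to $m^{1,1}$ follows from Remark~\ref{rm:structures}.
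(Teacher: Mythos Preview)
Your computational strategy for identifying the cohomological category with $A_{(p,q)}$ and for determining the $A_\infty$-structure via Proposition~\ref{classification} is essentially the same as the paper's: the same arcs $L_i$, the same local model near each puncture, the same reduction to counting a single $(n{+}1)$-gon (the complementary disk region) to extract the constants $a$ and $b$. The paper in fact computes these signs explicitly ($a=1$, $b=(-1)^n$) and then passes to $m^{1,1}$ by an explicit autoequivalence of $A$; your appeal to Remark~\ref{rm:structures} achieves the same thing. Your ``main obstacle'' is also less severe than you suggest: once you know $m_n(u_{i-1,i},\dots,u_{i,i+1})$ has degree $0$ for \emph{every} grading satisfying \eqref{cond}, it is forced to be a scalar multiple of $\id_{L_i}$, so there is no room for contributions from wrapped polygons landing on other generators---you only need to enumerate immersed $(n{+}1)$-gons with the prescribed corners and output at $\id$, and the obvious one is the only one.

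There is, however, a genuine gap: you have not addressed \emph{strict generation} at all. The theorem asserts that $L_1,\dots,L_n$ strictly generate $\W(C)$, i.e.\ that every object of $D\W(C)$ is an iterated cone on the $L_i$ (no idempotent completion allowed). Nothing in your outline touches this. The paper handles it by a separate argument (Lemma~\ref{l:generate}): one realizes $C$ as an $n$-fold simple branched cover of $\C$ with $2n-2$ branch points, identifies the Lefschetz thimbles with (two copies each of) $L_1,\dots,L_{n-1}$, and then invokes a general generation theorem for wrapped Fukaya categories of curves (Theorem~\ref{thm:thimbles_generate} in Appendix~\ref{appendix}), whose proof uses doubling to a closed surface, Seidel's Dehn twist exact sequence, and the restriction functor of \cite{AS}. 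This is not a routine step and cannot be absorbed into the Floer-theoretic computation you describe; without it, you have only identified a full $A_\infty$-subcategory of $\W(C)$, not the whole derived category.
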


We now make a couple of remarks in order to clarify the meaning of this
statement.

\begin{remark}\label{rmk:generation}
{\rm (1) A given set of objects is usually said to generate a
triangulated category $\mathcal{T}$ when the smallest triangulated
subcategory of $\mathcal{T}$ containing the given objects and closed
under taking direct summands is the whole category $\mathcal{T}$; or
equivalently, when every object of $\mathcal{T}$ is
isomorphic to a direct summand of a complex built out of the
given objects. In the symplectic geometry literature this concept is
sometimes called ``split-generation'' (cf.\ e.g.~\cite{generate}).
By contrast, in this paper we always consider a stronger notion of
generation, in which direct summands are not allowed: namely, we say
that $\mathcal{T}$ is {\em strictly generated} by the given objects if
the minimal triangulated subcategory containing
these objects is $\mathcal{T}.$}

{\rm (2) The $A_\infty$-category $\W(C)$ is not triangulated, however it
admits a natural triangulated enlargement, the $A_\infty$-category of
{\em twisted complexes} $\mathrm{Tw}\,\W(C)$ (see e.g.\ section 3 of
\cite{seidel-book}). The derived wrapped Fukaya category, appearing
in the statement of Theorem \ref{th:main}, is then
defined to be the homotopy category $D\W(C)=H^0(\mathrm{Tw}\,\W(C))$; this
is an honest triangulated category.
By definition, we say that $\W(C)$ is strictly
generated by the objects $L_1,\dots,L_n$ \/if these objects strictly
generate the derived category $D\W(C)$; or equivalently, if
every object of\/ $\W(C)$ is quasi-isomorphic in $\mathrm{Tw}\,\W(C)$ to a
twisted complex built out of the objects $L_1,\dots,L_n$ and their shifts.}

{\rm (3) For the examples we consider in this paper, it turns out
that the difference between strict generation and split-generation is
not important. Indeed, in Appendix B we show that the triangulated
categories $D\W(C)$ and $D_{sg}(W^{-1}(0))$ are actually idempotent complete.
}
\end{remark}

In order to construct the wrapped Fukaya category $\W(C),$
we equip $C$ with a Liouville structure, i.e.\ a 1-form $\lambda$ whose
differential is a symplectic form $d\lambda=\omega,$ and whose associated
Liouville vector field $Z$ (defined by $i_Z\omega=\lambda$) is outward
pointing near the punctures; thus $(C,\lambda)$ has $n$ cylindrical ends modelled
on $(S^1\times [1,\infty),r\,d\theta).$
The objects of $\W(C)$ are (graded) exact
Lagrangian submanifolds of $C$ which are invariant under the Liouville
flow (i.e., radial) inside each cylindrical end (see~\cite{AS,generate} for
details; we will use the same setup as in \cite{generate}).   As a consequence of Theorem \ref{th:fukaya},
the wrapped Fukaya category is independent of the choice of $\lambda$; this can be \emph{a priori} verified using
the fact that, up to adding the differential of a compactly supported function,  any two Liouville structures can be intertwined by a symplectomorphism.

We specifically consider
$n$ disjoint oriented properly embedded arcs $L_1,\dots,L_n\subset C,$ where $L_i$
runs from the $i^\mathrm{th}$ to the $i+1^\mathrm{st}$ cylindrical end
of $C$ (counting mod $n$ as usual),
as shown in Figure \ref{fig:nlags}.
To simplify some aspects of the discussion below,
we will assume that $L_1,\dots,L_n$ are
invariant under the Liouville flow everywhere (not just at infinity);
this can be ensured e.g.\ by constructing the Liouville structure starting from
two discs (the front and back of Figure \ref{fig:nlags}) and attaching $n$
handles whose co-cores are the $L_i.$

Recall that the wrapped Floer complex
$CW^*(L_i,L_j)$ is generated by time 1 chords of the flow $\phi_H^t$ generated by a
Hamiltonian $H:C\to\R$ which is quadratic at infinity (i.e.,
$H(r,\theta)=r^2$ in the cylindrical ends), or equivalently by (transverse)
intersection points of $\phi^1_H(L_i)\cap L_j.$ Without loss of generality we
can assume that, for each $1\le i\le n,$ $H_{|L_i}$ is a Morse function
with a unique minimum.

\begin{figure}[t]
\setlength{\unitlength}{6.5mm}
\begin{picture}(8.6,8)(-4.3,-3.6)
\psset{unit=\unitlength}
\rput{0}{\psellipse(0,4)(1,0.3)\pscurve(1,4)(1.2,2)(1.53,1.76)(3.49,2.19)
  \psline[linewidth=1.5pt]{->}(1.250,1.704)(1.218,1.716)%
  \pscurve[linewidth=1.5pt](0.8,3.8)(1,2)(1.53,1.6)(3.45,2)}
\rput{72}{\psellipse(0,4)(1,0.3)\pscurve(1,4)(1.2,2)(1.53,1.76)(3.49,2.19)
  \psline[linewidth=1.5pt]{->}(1.250,1.704)(1.218,1.716)%
  \pscurve[linewidth=1.5pt](0.8,3.8)(1,2)(1.53,1.6)(3.45,2)}
\rput{144}{\psellipse(0,4)(1,0.3)\pscurve(1,4)(1.2,2)(1.53,1.76)(3.49,2.19)
  \psline[linewidth=1.5pt]{->}(1.250,1.704)(1.218,1.716)%
  \pscurve[linewidth=1.5pt](0.8,3.8)(1,2)(1.53,1.6)(3.45,2)}
\rput{-144}{\psellipse(0,4)(1,0.3)\pscurve(1,4)(1.2,2)(1.53,1.76)(3.49,2.19)
  \psline[linewidth=1.5pt]{->}(1.250,1.704)(1.218,1.716)%
  \pscurve[linewidth=1.5pt](0.8,3.8)(1,2)(1.53,1.6)(3.45,2)}
\rput{-72}{\psellipse(0,4)(1,0.3)\pscurve(1,4)(1.2,2)(1.53,1.76)(3.49,2.19)
  \psline[linewidth=1.5pt]{->}(1.250,1.704)(1.218,1.716)%
  \pscurve[linewidth=1.5pt](0.8,3.8)(1,2)(1.53,1.6)(3.45,2)}
\put(0,4){\makebox(0,0)[cc]{\small 2}}
\put(3.8,1.24){\makebox(0,0)[cc]{\small 1}}
\put(-3.8,1.24){\makebox(0,0)[cc]{\small 3}}
\put(-2.35,-3.24){\makebox(0,0)[cc]{\small \dots}}
\put(2.35,-3.24){\makebox(0,0)[cc]{\small $n$}}
\put(-1.3,1.25){\makebox(0,0)[cc]{\small $L_2$}}
\put(1.3,1.25){\makebox(0,0)[cc]{\small $L_1$}}
\put(1.45,-0.7){\makebox(0,0)[cc]{\small $L_n$}}
\put(-1.45,-0.7){\makebox(0,0)[cc]{\small \dots}}
\psellipticarcn{->}(0,4)(1.2,0.5){-55}{-125}
\psellipticarcn{->}(0,4)(1.3,0.5){-155}{-40}
\put(0,3.35){\makebox(0,0)[ct]{\tiny $u_{1,2}$}}
\put(1.35,4){\makebox(0,0)[lc]{\tiny $v_{2,1}$}}
\rput{-72}{\psellipticarcn{->}(0,4)(1.3,0.5){-100}{-99}}
\put(2.65,1){\tiny $x_1$}
\end{picture}
\caption{The generators of $\W(C)$}
\label{fig:nlags}
\end{figure}
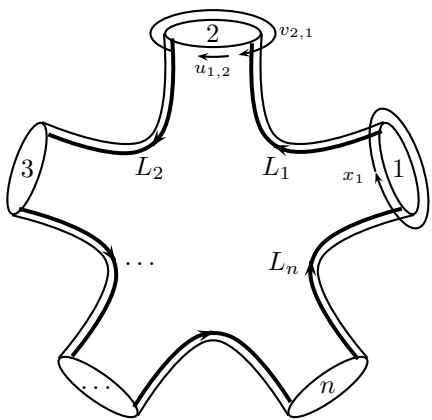

\begin{lemma}\label{l:gradedCW}
The Floer complex $CW^*(L_i,L_j)$ is naturally isomorphic to the vector
space $A(X_i,X_j)$ defined by \eqref{eq:defA}. Moreover, for every choice of
$\Z\text{-}$grading satisfying \eqref{cond} there exists a choice of graded lifts
of $L_1,\dots,L_n$ such that the isomorphism preserves gradings.
\end{lemma}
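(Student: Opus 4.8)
The plan is to compute the wrapped Floer complexes $CW^*(L_i,L_j)$ directly from the geometry of the arcs $L_1,\dots,L_n$ in the punctured sphere $C$, using the explicit Hamiltonian $H$ which is quadratic in each cylindrical end. The key point is that wrapped generators of $CW^*(L_i,L_j)$ correspond to Reeb chords from $L_i$ to $L_j$ in the cylindrical ends, graded by the number of times they wind around the puncture, together with possibly one short chord near an interior minimum of $H|_{L_i}$. First I would observe that $L_i$ and $L_j$ are disjoint unless $j\in\{i-1,i,i+1\}$ (mod $n$), simply because of how the arcs are arranged in Figure \ref{fig:nlags}: $L_i$ and $L_{i+1}$ share exactly the $(i+1)^{\mathrm{st}}$ puncture, $L_i$ and $L_{i-1}$ share the $i^{\mathrm{th}}$ puncture, and $L_i$ with itself sees both of its endpoints. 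For $|i-j|\ge 2$ there are no intersection points after perturbation by $\phi_H^1$, so $CW^*(L_i,L_j)=0$, matching the last case of \eqref{eq:defA}.

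Next I would analyze the three nonzero cases. For $j=i$, the time-$1$ chords consist of the interior minimum of $H|_{L_i}$ together with, for each of the two endpoints of $L_i$ (which lie in the $i^{\mathrm{th}}$ and $(i+1)^{\mathrm{st}}$ cylindrical ends), one chord winding $m$ times around that puncture for every $m\ge 1$. Identifying the generator at the $(i+1)^{\mathrm{st}}$ end winding once with $x_i$, the one at the $i^{\mathrm{th}}$ end with $y_i$, and the minimum with $\id_{X_i}$, these generators are in bijection with the monomial basis $\{1, x_i^k, y_i^k : k\ge 1\}$ of $k[x_i,y_i]/(x_iy_i)$ — there is no ``mixed'' generator because the two endpoints lie in different ends. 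For $j=i+1$, $L_i$ and $L_{i+1}$ meet only near the $(i+1)^{\mathrm{st}}$ puncture, so the chords from $L_i$ to $L_{i+1}$ are indexed by a nonnegative winding number; the shortest one I would call $u_{i,i+1}$ and the one winding $k$ extra times is identified with $x_{i+1}^k u_{i,i+1}=u_{i,i+1}y_i^k$, giving the free rank-one bimodule $k[x_{i+1}]u_{i,i+1}$. The case $j=i-1$ is symmetric, with generators $v_{i,i-1}$ and $k[y_{i-1}]v_{i,i-1}=v_{i,i-1}k[x_i]$. This produces the claimed vector-space isomorphism $CW^*(L_i,L_j)\simeq A(X_i,X_j)$.

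Finally, for the grading statement, I would recall that a $\Z$-grading on the wrapped Fukaya category requires a trivialization of (a suitable power of) the canonical bundle — equivalently, since $C$ is an open surface, a nonvanishing section of $(T^*C)^{\otimes 2}$, or in fact just a choice of line field on $C$ — together with graded lifts of the Lagrangians. The degree of a wrapped chord is then computed from a Maslov-type index, and for chords winding around a puncture the degree grows linearly in the winding number, with the increment governed by how the chosen line field rotates relative to the boundary of the cylindrical end. The freedom in choosing the line field near each puncture, subject to the global constraint that it extend over all of $C$ (which, by an Euler-characteristic count for the $n$-punctured sphere, forces the total winding to equal $n-2$), is exactly the freedom encoded in condition \eqref{cond}: choosing odd integers $p_i=\deg u_{i,i+1}$ and $q_i=\deg v_{i,i-1}$ with $\sum p_i=\sum q_i=n-2$, and then $\deg x_i=\deg y_{i-1}=p_i+q_i$ follows from the composition $v_{i,i-1}\circ u_{i-1,i}$-type relations. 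I expect the main obstacle to be the careful bookkeeping of the Maslov index/grading computation — matching the rotation number of the line field around each puncture to the prescribed degrees $p_i,q_i$ and verifying the global $\sum p_i = n-2$ constraint is consistent — rather than the (essentially combinatorial) enumeration of generators, which is transparent from the picture.
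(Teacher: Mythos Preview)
Your approach is essentially the paper's: enumerate generators as Hamiltonian chords in the cylindrical ends (plus the interior minimum for $i=j$), then handle gradings via a trivialization of $TC$ and graded lifts. Two small points are worth flagging. First, your labeling conventions are swapped relative to the paper's (it places $x_i$ in the $i^{\mathrm{th}}$ end and $y_i$ in the $(i{+}1)^{\mathrm{st}}$, and sets $p_i=\deg u_{i-1,i}$), so be careful when matching indices. Second, you justify $\deg x_i=p_i+q_i$ via the composition relations, but the product structure on $CW^*$ is only identified with that of $A$ in the \emph{next} lemma; at this stage the paper instead computes Maslov indices directly in each end, obtaining $\deg(x_i^k)=2kd_i$ and $\deg(v_{i,i-1})=2d_i-p_i=:q_i$, where $d_i$ is the rotation number of the chosen trivialization around the $i^{\mathrm{th}}$ puncture (with $\sum d_i=n-2$ forced by Euler characteristic, and $\sum p_i=n-2$ read off from the front $n$-gon). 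This separates the two choices---trivialization fixes the $d_i$, graded lifts fix $p_2,\dots,p_n$---and avoids the apparent circularity.
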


\begin{figure}[t]
\setlength{\unitlength}{8mm}
\begin{picture}(16,3.1)(-8,-1.6)
\psset{unit=\unitlength}
\psline(-7.5,1)(7.5,1)
\psline[linestyle=dotted](-8,1)(-7,1)
\psline[linestyle=dotted](-8,-1)(-7,-1)
\psline[linestyle=dotted](8,1)(7,1)
\psline[linestyle=dotted](8,-1)(7,-1)
\put(8.2,0){\makebox(0,0)[cl]{\small $i$}}
\put(-8,0){\makebox(0,0)[cr]{\small $i+1$}}
\psline(-7.5,-1)(-3,-1) \pscurve(-3,-1)(-2,-1.1)(-1.5,-1.5)
\psline(7.5,-1)(3,-1) \pscurve(3,-1)(2,-1.1)(1.5,-1.5)
\psline[linestyle=dotted](-1.5,-1.5)(-1.2,-1.9)
\psline[linestyle=dotted](1.5,-1.5)(1.2,-1.9)
\pscurve[linestyle=dashed](0,1)(0.4,0.9)(1,0)(1.6,-0.95)(2,-1.05)
\pscurve(3.5,1)(3.25,0.9)(2.75,0)(2.25,-0.95)(2,-1.05)
\pscurve[linestyle=dashed](3.5,1)(3.75,0.9)(4.25,0)(4.75,-0.9)(5,-1)
\pscurve(6.5,1)(6.25,0.9)(5.75,0)(5.25,-0.9)(5,-1)
\pscurve[linestyle=dashed](6.5,1)(6.75,0.9)(7.25,0)
\pscurve[linestyle=dotted](7.25,0)(7.75,-0.9)(8,-1)
\pscurve(0,1)(-0.4,0.9)(-1,0)(-1.6,-0.95)(-2,-1.05)
\pscurve[linestyle=dashed](-3.5,1)(-3.25,0.9)(-2.75,0)(-2.25,-0.95)(-2,-1.05)
\pscurve(-3.5,1)(-3.75,0.9)(-4.25,0)(-4.75,-0.9)(-5,-1)
\pscurve[linestyle=dashed](-6.5,1)(-6.25,0.9)(-5.75,0)(-5.25,-0.9)(-5,-1)
\pscurve(-6.5,1)(-6.75,0.9)(-7.25,0)
\pscurve[linestyle=dotted](-7.25,0)(-7.75,-0.9)(-8,-1)
\psline[linewidth=1pt]{-<}(-7.25,0.85)(1.75,0.85)
\psline[linewidth=1pt](1.75,0.85)(7.25,0.85)
\psline[linestyle=dotted](-8,0.85)(-7.25,0.85)
\psline[linestyle=dotted](8,0.85)(7.25,0.85)
\psline[linestyle=dotted](8,-0.85)(7.25,-0.85)
\psline[linestyle=dotted](-8,-0.85)(-7.25,-0.85)
\psline[linewidth=1pt]{->}(-7.25,-0.85)(-3,-0.85)
\psline[linewidth=1pt]{-<}(7.25,-0.85)(3,-0.85)
\pscurve[linewidth=1pt](-3,-0.85)(-1.8,-0.95)(-1.3,-1.4)
\psline[linestyle=dotted](-1.3,-1.4)(-1,-1.8)
\pscurve[linewidth=1pt](3,-0.85)(1.8,-0.95)(1.3,-1.4)
\psline[linestyle=dotted](1.3,-1.4)(1,-1.8)
\put(-0.9,-0.2){\small $\phi^1_H(L_i)$}
\put(1.5,1.15){\small $L_i$}
\put(3,-1.4){\small $L_{i-1}$}
\put(-3.65,-1.4){\small $L_{i+1}$}
\pscircle*(-6.8,0.85){0.08}
\pscircle*(-3.8,0.85){0.08}
\pscircle*(6.2,0.85){0.08}
\pscircle*(3.2,0.85){0.08}
\pscircle*(-0.45,0.85){0.08}
\put(-6.5,0.75){\makebox(0,0)[ct]{\tiny $y_{i}^2$}}
\put(-3.5,0.65){\makebox(0,0)[ct]{\tiny $y_{i}$}}
\put(-0.15,0.7){\makebox(0,0)[ct]{\tiny $\mathrm{id}_{L_i}$}}
\put(6.4,0.75){\makebox(0,0)[ct]{\tiny $x_{i}^2$}}
\put(3.4,0.65){\makebox(0,0)[ct]{\tiny $x_{i}$}}
\pscircle*(-4.7,-0.85){0.08}
\pscircle*(-1.7,-1){0.08}
\pscircle*(5.3,-0.85){0.08}
\pscircle*(2.35,-0.85){0.08}
\put(-4.7,-1.1){\makebox(0,0)[tc]{\tiny $u_{i,i+1}y_i$}}
\put(-1.5,-1){\makebox(0,0)[lc]{\tiny $u_{i,i+1}$}}
\put(2.5,-0.7){\makebox(0,0)[rb]{\tiny $v_{i,i-1}$}}
\put(5.5,-1.1){\makebox(0,0)[tc]{\tiny $v_{i,i-1}x_i$}}
\end{picture}
\caption{Generators of the wrapped Floer complexes}
\label{fig:CW}
\end{figure}
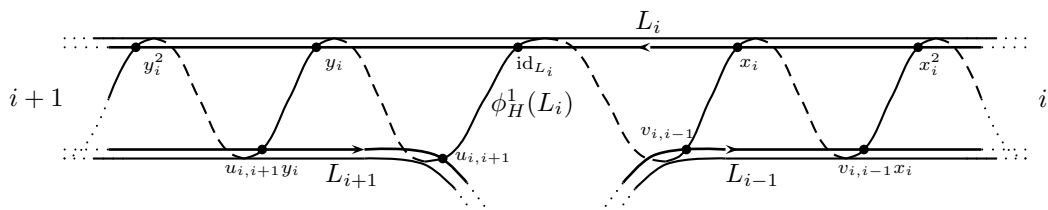

\proof
The intersections between $\phi^1_H(L_i)$ and $L_i$ (resp.\ $L_{i\pm 1}$) are
pictured in Figure \ref{fig:CW}. The point of $\phi^1_H(L_i)\cap L_i$ which corresponds to the
minimum of $H_{|L_i}$ is labeled by the identity element, while the
successive intersections in the $i^\mathrm{th}$ end are labeled by powers
of $x_i,$ and similarly those in the $(i+1)^\mathrm{st}$ end are labeled by
powers of $y_{i}.$ The generators of $CW^*(L_i,L_{i+1})$ (i.e.,
points of $\phi^1_H(L_i)\cap L_{i+1}$) are labeled by $u_{i,i+1} y_i^k,$
$k=0,1,\dots,$ and similarly the generators of $CW^*(L_i,L_{i-1})$
are labeled by $v_{i,i-1} x_i^k$ (see Figure \ref{fig:CW}).

Recall that a $\Z\text{-}$grading on Floer complexes requires the choice of a
trivialization of $TC.$ Denote by $d_i\in\Z$ the rotation number of a simple
closed curve encircling the $i^\mathrm{th}$ puncture of $C$ with respect
to the chosen trivialization: by an Euler characteristic argument,
$\sum d_i=n-2.$ Observing that each rotation
around the $i^\mathrm{th}$ cylindrical end contributes $2d_i$ to the
Maslov index, we obtain that $\deg(x_i^k)=2kd_i,$ and similarly
$\deg(y_{i}^{k})=2kd_{i+1}.$

The freedom to choose graded lifts of the Lagrangians $L_i$ (compatibly
with the given orientations) means that $p_i=\deg(u_{i-1,i})$ can be any
odd integer for $i=2,\dots,n$; however, considering the $n$-gon obtained
by deforming the front half of Figure \ref{fig:nlags}, we obtain the
relation $p_1+\dots+p_n=n-2.$ Moreover, comparing the Maslov indices of
the various morphisms between $L_{i-1}$ and $L_i$ in the $i^\mathrm{th}$
end we obtain that $\deg(x_i^ku_{i-1,i})=p_i+2kd_i,$
$\deg(v_{i,i-1})=2d_i-p_i,$ and $\deg( v_{i,i-1}x_i^l)=2d_i-p_i+2ld_i.$
Setting $q_i=2d_i-p_i,$ this completes the proof.
\endproof

It follows immediately from Lemma \ref{l:gradedCW} that the Floer
differential on $CW^*(L_i,L_j)$ is identically zero, since
the degrees of the generators all have the same parity.

\begin{lemma}\label{l:HW}
There is a natural isomorphism of algebras
$$
\bigoplus_{i,j} \,HW^*(L_i,L_j)\simeq \bigoplus_{i,j} A(X_i,X_j)
$$
where $A$ is the $\kk$-linear category defined by \eqref{eq:defA}.
\end{lemma}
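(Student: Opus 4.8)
Since Lemma~\ref{l:gradedCW} provides a grading-preserving isomorphism of vector spaces $CW^*(L_i,L_j)\cong A(X_i,X_j)$ and the Floer differential vanishes, we automatically get $HW^*(L_i,L_j)\cong A(X_i,X_j)$ as graded vector spaces; the content of the lemma is that this isomorphism intertwines the product $\mu^2$ on wrapped Floer cohomology with the composition of \eqref{eq:defA}. I would first record that $A$ admits a very economical presentation: it is the path category of the cyclic quiver with vertices $X_1,\dots,X_n$ and arrows $u_{i,i+1}\colon X_i\to X_{i+1}$, $v_{i+1,i}\colon X_{i+1}\to X_i$, modulo the relations $u_{i+1,i+2}\circ u_{i,i+1}=0$ and $v_{i-1,i-2}\circ v_{i,i-1}=0$. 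Indeed $x_i=u_{i-1,i}\circ v_{i,i-1}$ and $y_i=v_{i+1,i}\circ u_{i,i+1}$ are the two loops generating $A(X_i,X_i)$, and the remaining relations of $A$ — in particular $x_iy_i=0$ and the bimodule identities $x_{i+1}u_{i,i+1}=u_{i,i+1}y_i$ — follow from the two displayed families together with associativity. Consequently it suffices to (i) compute $\mu^2(u_{i-1,i},v_{i,i-1})$ and $\mu^2(v_{i+1,i},u_{i,i+1})$ and check that the classes $\id_{L_i}$ are units, and (ii) verify $\mu^2(u_{i+1,i+2},u_{i,i+1})=0$ and $\mu^2(v_{i-1,i-2},v_{i,i-1})=0$. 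These yield a surjective homomorphism of graded algebras $A\to\bigoplus_{i,j}HW^*(L_i,L_j)$, which is an isomorphism because source and target have equal graded dimension by Lemma~\ref{l:gradedCW}.

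For (i), all the generators involved lie near a single puncture: $u_{i-1,i}$, $v_{i,i-1}$ and the powers of $x_i$ all live in the $i$-th cylindrical end of $C$, which contains exactly two of the arcs, namely $L_{i-1}$ and $L_i$. Passing from the quadratic Hamiltonian to the equivalent linear-at-infinity description of $\W(C)$ (cf.\ \cite[Sections~2--4]{generate}), the count of rigid perturbed triangles localizes to a count of immersed triangles in an annulus carrying two radial arcs, where there is a unique regular triangle with corners $v_{i,i-1}$, $u_{i-1,i}$, $x_i$; for a consistent choice of orientations it contributes with sign $+1$, giving $\mu^2(u_{i-1,i},v_{i,i-1})=x_i$, and a mirror computation near the $(i+1)$-st puncture gives $\mu^2(v_{i+1,i},u_{i,i+1})=y_i$. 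Unitality of $\id_{L_i}$ is standard. For (ii), the product $\mu^2(u_{i+1,i+2},u_{i,i+1})$ takes values in $HW^*(L_i,L_{i+2})\cong A(X_i,X_{i+2})$: for $n\ge4$ this space vanishes, while for $n=3$ it is concentrated in odd degrees (its generators $v_{i,i-1}x_i^k$ have odd degree), whereas $\mu^2$ of two odd-degree generators has even degree; in both cases the product vanishes, and symmetrically for $\mu^2(v_{i-1,i-2},v_{i,i-1})$.

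The main obstacle is the localization in step (i): one must invoke the rescaling and continuation-map arguments underpinning the construction of the wrapped Fukaya category of a punctured surface (cf.\ \cite{AS,generate}) to rule out contributions from perturbed triangles that wrap repeatedly around a cylindrical end or reach across the core of $C$ into the two disk regions, and one must check that the signs are as claimed. Granting this, the identification of Lemma~\ref{l:gradedCW} is an isomorphism of algebras. (One could instead bypass the presentation-theoretic shortcut and compute all products $\mu^2$ directly from the explicit description of the wrapped complexes in Figure~\ref{fig:CW}, reaching the same conclusion.)
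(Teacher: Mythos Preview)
Your strategy---present $A$ as the path category of the cyclic quiver modulo $u_{i+1,i+2}\circ u_{i,i+1}=0$ and $v_{i-1,i-2}\circ v_{i,i-1}=0$, then check only these relations plus unitality and the products $\mu^2(u_{i-1,i},v_{i,i-1})$, $\mu^2(v_{i+1,i},u_{i,i+1})$---is different from the paper's, which rewrites the perturbed product as an honest immersed-triangle count on $\phi_H^2(L_i),\phi_H^1(L_j),L_k$ (via the rescaling trick of \cite{generate}) and then enumerates all such triangles directly, using simultaneous compatibility with every grading satisfying \eqref{cond} to pin down each output (Figure~\ref{fig:product}). Your parity argument for (ii) is clean, and the presentation of $A$ you write down is correct.

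There is, however, a genuine gap in your last step. After (i), (ii) and unitality you have a graded algebra homomorphism $\varphi\colon A\to\bigoplus_{i,j}HW^*(L_i,L_j)$, but equality of graded dimensions does \emph{not} by itself make a homomorphism bijective---you still need either injectivity or surjectivity, and you have established neither. Concretely, (i) gives $\varphi(x_i)=x_i$ (the labelled chord), but nothing you have checked excludes $\mu^2(x_i,x_i)=0$ in $HW^*$; if that happened, $\varphi(x_i^2)=0$ and $\varphi$ would be neither injective nor surjective. The fix is short: the same single-end triangle enumeration behind (i) equally gives $\mu^2(x_i^a,x_i^b)=x_i^{a+b}$, $\mu^2(y_i^a,y_i^b)=y_i^{a+b}$, and the bimodule products, so $\varphi$ agrees with the identification of Lemma~\ref{l:gradedCW} on a basis and is bijective. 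But once you say this you are effectively computing all products in each cylindrical end, which is exactly the paper's approach and largely dissolves the economy of the presentation-theoretic shortcut. Incidentally, the paper's reformulation as an immersed-polygon count also turns your ``localization'' concern into a finite combinatorial enumeration on the surface, with the grading argument ruling out stray contributions.
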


\proof
Recall from \cite[Section 3.2]{generate} that the product on wrapped
Floer cohomology can be defined by counting
solutions to a perturbed Cauchy-Riemann equation. Namely,
one considers finite energy maps $u:S\to C$
satisfying an equation of the form
\begin{equation}\label{eq:pertJhol}
(du-X_H\otimes\alpha)^{0,1}=0.
\end{equation}
Here the domain $S$ is a disc with three
strip-like ends, and $u$ is required to map $\partial S$ to the images
of the respective Lagrangians under suitable Liouville rescalings
(in our case $L_i$ is invariant under the Liouville
flow, so $\partial S$ is mapped to $L_i$);
$X_H$ is the Hamiltonian vector field generated by $H,$
and $\alpha$ is a closed 1-form on $S$ such that $\alpha_{|\partial S}=0$
and which is standard in the strip-like ends
(modelled on $dt$ for the input ends, $2\,dt$ for the output end).
(Further perturbations of $H$ and $J$ would be required to achieve
transversality in general, but are not necessary in our case.)

The equation (\ref{eq:pertJhol}) can be rewritten as a standard
holomorphic curve equation (with a domain-dependent almost-complex structure)
by considering $$\tilde{u}=\phi_H^\tau\circ u:S\to C,$$ where $\tau:S\to
[0,2]$ is a primitive of $\alpha.$ The product on
$CW^*(L_j,L_k)\otimes CW^*(L_i,L_j)$ is then the
usual Floer product
$$CF^*(\phi_H^1(L_j),L_k)\otimes CF^*(\phi_H^2(L_i),\phi_H^1(L_j))\to
CF^*(\phi_H^2(L_i),L_k),$$
where the right-hand side is identified with
$CW^*(L_i,L_k)$ by a rescaling trick \cite{generate}.

With this understood, since we are interested in rigid holomorphic discs,
the computation of the product structure is
simply a matter of identifying all immersed polygonal regions in $C$ with
boundaries on $\phi_H^2(L_i),$ $\phi_H^1(L_j)$ and $L_k$ and satisfying
a local convexity condition at the corners. (Simultaneous compatibility of
the product structure with all $\Z\text{-}$gradings satisfying \eqref{cond}
drastically reduces the number of cases to consider.) Signs are determined
as in \cite[Section 13]{seidel-book}, and in our case they all turn out to
be positive for parity reasons.

As an example, Figure \ref{fig:product} shows the triangle which yields
the identity $u_{i-1,i}\circ v_{i,i-1}=x_i.$ (The triangle corresponding
to $u_{i-1,i}\circ (v_{i,i-1} x_i)=x_i^2$ is also visible.)
\endproof

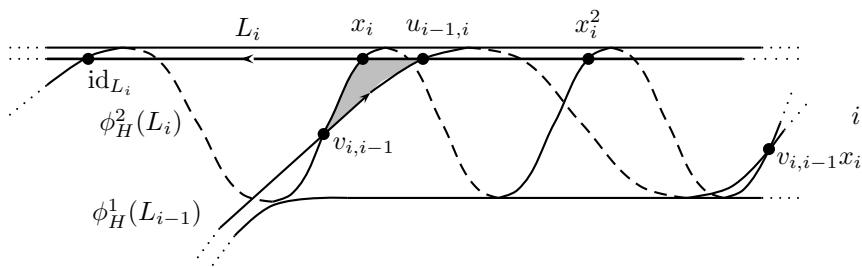
\begin{figure}[t]
\setlength{\unitlength}{1cm}
\begin{picture}(11,3.1)(-1.5,-1.6)
\psset{unit=\unitlength}
\psline(-1,1)(8.5,1)
\psline[linestyle=dotted](-1.5,1)(-1,1)
\psline[linestyle=dotted](-1.5,0.85)(-1,0.85)
\psline[linestyle=dotted](9,1)(8.5,1)
\psline[linestyle=dotted](9,-1)(8.5,-1)
\put(9.7,0.1){\makebox(0,0)[cl]{\small $i$}}
\psline(8.5,-1)(3,-1) \pscurve(3,-1)(2,-1.1)(1.5,-1.5)
\psline[linestyle=dotted](1.5,-1.5)(1.2,-1.9)
\pscurve[linestyle=dashed](3.5,1)(3.75,0.9)(4.25,0)(4.75,-0.9)(5,-1)
\pspolygon[linestyle=none,fillstyle=solid,fillcolor=lightgray]%
  (2.68,-0.15)(3.45,0.55)(4,0.85)(3.2,0.85)(3,0.6)
\pscurve[linestyle=dashed](0,1)(0.4,0.9)(1,0)(1.6,-0.95)(2,-1.05)
\pscurve(3.5,1)(3.25,0.9)(2.75,0)(2.25,-0.95)(2,-1.05)
\pscurve(6.5,1)(6.25,0.9)(5.75,0)(5.25,-0.9)(5,-1)
\pscurve[linestyle=dashed](6.5,1)(6.75,0.9)(7.25,0)(7.75,-0.9)(8,-1)
\pscurve(8,-1)(8.25,-0.9)(8.75,0)
\psline[linestyle=dotted](8.75,0)(8.9,0.4)
\pscurve(0,1)(-0.4,0.9)(-1,0.5)
\psline[linestyle=dotted](-1,0.5)(-1.5,0.1)
\psline[linewidth=1pt]{-<}(-1,0.85)(1.75,0.85)
\psline[linewidth=1pt](1.75,0.85)(8.25,0.85)
\psline[linestyle=dotted](9,0.85)(8.25,0.85)
\pscurve(3.3,0.4)(4,0.85)(4.6,1)
\pscurve[linestyle=dashed](4.6,1)(5.3,0.85)(6.1,0)(6.9,-0.85)(7.5,-1)
\pscurve(7.5,-1)(8.1,-0.85)(8.8,-0.1)
\psline[linestyle=dotted](8.8,-0.1)(9.1,0.25)
\pscurve{<-}(3.3,0.4)(1.8,-0.95)(1.3,-1.4)
\psline[linestyle=dotted](1.3,-1.4)(1,-1.8)
\put(-0.3,-0.1){\small $\phi^2_H(L_i)$}
\put(1.5,1.15){\small $L_i$}
\put(-0.4,-1.3){\small $\phi_H^1(L_{i-1})$}
\pscircle*(6.2,0.85){0.08}
\pscircle*(3.2,0.85){0.08}
\pscircle*(-0.45,0.85){0.08}
\put(-0.15,0.7){\makebox(0,0)[ct]{\small $\mathrm{id}_{L_i}$}}
\put(6.2,1.12){\makebox(0,0)[cb]{\small $x_{i}^2$}}
\put(3.2,1.17){\makebox(0,0)[cb]{\small $x_{i}$}}
\pscircle*(2.68,-0.15){0.08}
\pscircle*(4,0.85){0.08}
\pscircle*(8.6,-0.35){0.08}
\put(2.8,-0.2){\makebox(0,0)[lt]{\small $v_{i,i-1}$}}
\put(8.7,-0.4){\makebox(0,0)[lt]{\small $v_{i,i-1}x_i$}}
\put(4.2,1.12){\makebox(0,0)[cb]{\small $u_{i-1,i}$}}
\end{picture}
\caption{A holomorphic triangle contributing to the product}
\label{fig:product}
\end{figure}

\begin{lemma}\label{l:mn}
In $\W(C)$ we have
$$
m_n(u_{i-1,i},u_{i-2,i-1},\dots,u_{i,i+1})=\mathrm{id}_{L_i}\ \ \mathrm{and}\
\  m_n(v_{i+1,i},v_{i+2,i+1},\dots,v_{i,i-1})=(-1)^n\mathrm{id}_{L_i}.
$$
\end{lemma}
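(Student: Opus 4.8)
The plan is to exhibit a single immersed polygon that computes each of the two $A_\infty$ products. Recall from Figure~\ref{fig:nlags} that the $n$ arcs $L_1,\dots,L_n$ together bound two complementary discs in $C$ (the front and the back of the picture), each of which is an immersed $n$-gon with boundary on $L_1\cup\dots\cup L_n$; the front disc has its corners labelled by the morphisms $u_{i,i+1}$ and the back disc has corners labelled by the $v_{i,i-1}$. To compute $m_n$, I would appeal to the setup of Lemma~\ref{l:HW}: after applying the rescaling/Hamiltonian-perturbation trick one is reduced to counting rigid holomorphic polygons with boundary on successive Liouville rescalings $\phi_H^{n}(L_i),\phi_H^{n-1}(L_{i-1}),\dots,\phi_H^1(L_{i+1})$ and $L_i$, with a local convexity condition at each corner. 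The perturbed arcs are small deformations of the original $L_j$'s, so the only rigid $(n+1)$-gon with the prescribed inputs $u_{i-1,i},u_{i-2,i-1},\dots,u_{i,i+1}$ and output in $A(X_i,X_i)$ is the one obtained by slightly deforming the front $n$-gon; its output corner sits at the minimum of $H|_{L_i}$, i.e.\ at $\mathrm{id}_{L_i}$. This gives $m_n(u_{i-1,i},\dots,u_{i,i+1})=\pm\,\mathrm{id}_{L_i}$, and symmetrically $m_n(v_{i+1,i},\dots,v_{i,i-1})=\pm\,\mathrm{id}_{L_i}$ from the back $n$-gon.

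The substance of the lemma is therefore the sign computation, and this is where I expect the real work to be. For the $u$-product I would argue that, as in Lemma~\ref{l:HW}, the disc can be oriented so that all the contributing signs in the sense of \cite[Section~13]{seidel-book} are positive: the degrees $p_i=\deg u_{i-1,i}$ are odd and $\sum p_i=n-2$, and the relevant determinant lines behave in the same way as for the triangle products already treated, so no extra sign is introduced and $m_n(u_{i-1,i},\dots,u_{i,i+1})=\mathrm{id}_{L_i}$. For the $v$-product the orientations of the $L_j$ enter with the opposite cyclic sense relative to the boundary orientation of the back $n$-gon, which is what produces the factor $(-1)^n$; concretely, reversing the roles of the two discs amounts to reversing the cyclic order of the $n$ boundary arcs, and each transposition of adjacent (odd-degree) inputs in the $A_\infty$ operation $m_n$ contributes a sign, the net effect being $(-1)^n$. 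Equivalently, one can compare the two $n$-gons via the orientation-reversing symmetry exchanging the front and back of Figure~\ref{fig:nlags}, keeping careful track of how it acts on the chosen graded lifts and orientations of the $L_i$.

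I would carry this out in the following order. First, identify the two immersed $n$-gons and check the local convexity (positivity at corners) condition for the inputs $u_{i-1,i},\dots,u_{i,i+1}$ and $v_{i+1,i},\dots,v_{i,i-1}$ respectively, using Figure~\ref{fig:nlags}; this already shows that $m_n$ of these particular cyclic words lands in $A(X_i,X_i)$ with output at $\mathrm{id}_{X_i}$. Second, verify rigidity: with the Hamiltonian perturbation as in Lemma~\ref{l:HW}, the moduli space of such polygons is a single transversely cut-out point, so there are no other contributions and no need for further perturbations. Third, run the sign bookkeeping of \cite[Section~13]{seidel-book} on the front $n$-gon to get $+\mathrm{id}_{L_i}$, and then on the back $n$-gon, tracking the reversal of cyclic order of the boundary arcs, to get $(-1)^n\,\mathrm{id}_{L_i}$. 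The main obstacle is purely this last sign computation; everything geometric is an immediate extension of the triangle count in Lemma~\ref{l:HW}, and the only thing that can go wrong is mis-accounting for the interaction between the orientation conventions, the odd gradings of the $u$'s and $v$'s, and the orientation of the capping disc, so I would be careful to fix all conventions exactly as in \cite{seidel-book} before starting.
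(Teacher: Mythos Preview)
Your approach is essentially the paper's: identify the front and back $n$-gons of Figure~\ref{fig:nlags} as the unique rigid polygons, note that the output corner is $\mathrm{id}_{L_i}$, and then compute signs via \cite[Section~13]{seidel-book}. The paper also begins with the grading observation that $m_n(u_{i-1,i},\dots,u_{i,i+1})$ has degree~$0$ for every grading satisfying \eqref{cond}, which forces it to be a scalar multiple of $\mathrm{id}_{L_i}$ before any polygon counting; this is worth stating explicitly.

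The one place where your write-up diverges is the justification of the $(-1)^n$. Your first sentence there is correct and is exactly the paper's argument: for the back polygon the induced boundary orientation runs opposite to the chosen orientations of the $L_j$, and since each input $v_{j,j-1}$ has odd degree~$q_j$, the sign rule of \cite[Section~13]{seidel-book} picks up a factor of $-1$ at each of the $n$ corners, yielding $(-1)^n$. But your alternative heuristic---``reversing the cyclic order amounts to transpositions of adjacent odd-degree inputs, each contributing a sign''---is not a valid $A_\infty$ sign rule: the two $m_n$ computations involve different polygons with different inputs, and there is no operation of permuting inputs that relates them. Drop that explanation and keep only the orientation-comparison argument; that is all the paper uses, and it is complete.
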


\proof
Since $m_n(u_{i-1,i},\dots,u_{i,i+1})$ has degree $0$ for all
gradings satisfying \eqref{cond}, it must be a scalar multiple of
$\mathrm{id}_{L_i}.$ By the same argument as in Lemma \ref{l:HW}, the
calculation reduces to an enumeration of immersed $(n+1)$-sided
polygonal regions
with boundary on $\phi_H^n(L_i),$ $\phi_H^{n-1}(L_{i+1}),$ \dots,
$\phi_H^1(L_{i-1}),$ and $L_i,$ with locally convex corners at the prescribed intersection
points. Recall that $u_{j,j+1}$ is the first intersection point between
the images of $L_j$ and $L_{j+1}$ created by the wrapping flow inside the
$(j+1)^\mathrm{st}$ cylindrical end, and can also be visualized as a chord from
$L_j$ to $L_{j+1}$ as pictured in Figure \ref{fig:nlags}. The only polygonal
region which contributes to $m_n$ is therefore the front half of Figure
\ref{fig:nlags} (deformed by the wrapping flow).  Since the orientation of
the boundary of the polygon agrees with that of the $L_j$'s,
its contribution to the coefficient of $\mathrm{id}_{L_i}$ in
$m_n(u_{i-1,i},u_{i-2,i-1},\dots,u_{i,i+1})$ is $+1$
(cf.\ \cite[\S 13]{seidel-book}).

The argument is the same for $m_n(v_{i+1,i},\dots,v_{i,i-1}),$ except the
polygon which contributes now corresponds to the back half of Figure
\ref{fig:nlags}. Since the orientation of the boundary of the polygon
differs from that of the $L_j$'s, and $\deg(v_{j,j-1})=q_j$ is odd for all
$j=1,\dots,n,$ the coefficient of $\mathrm{id}_{L_i}$ is now $(-1)^n.$
\endproof

By Lemma \ref{classification}, we conclude that
the $A_\infty\text{-}$structure on $\bigoplus_{i,j} \Hom(L_i,L_j)$
is strictly homotopic to $m^{1,(-1)^n}.$ The sign discrepancy can be
corrected by changing the identification between the two categories:
namely, the automorphism of $\widetilde{A}$ which maps $u_{i,i+1}$ to
itself, $v_{i,i-1}$ to $-v_{i,i-1},$ and $x_i$ to $-x_i$ intertwines
the $A_\infty\text{-}$structures $m^{1,(-1)^n}$ and $m^{1,1}.$

The final ingredient needed for Theorem \ref{th:fukaya} is the following
generation statement:

\begin{lemma}\label{l:generate}
$\W(C)$ is strictly generated by $L_1,\dots,L_{n-1}.$
\end{lemma}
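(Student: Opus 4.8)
The plan is to show that the subcategory generated by $L_1,\dots,L_{n-1}$ already contains $L_n$, so that by Theorem~\ref{th:fukaya} (with the generation statement for all $n$ of the $L_i$, proved in Appendix~A) we are done. The key observation is that the algebra relations established in Lemmas \ref{l:HW} and \ref{l:mn} express $L_n$ (up to quasi-isomorphism in $\mathrm{Tw}\,\W(C)$) as a twisted complex built from $L_1,\dots,L_{n-1}$. Concretely, I would use the morphisms $v_{i,i-1}\in\Hom(L_i,L_{i-1})$, which form a "chain" $L_{n-1}\to L_{n-2}\to\cdots\to L_1$, together with the higher product $m_n(v_{1,n},\dots,v_{3,2},v_{2,1})$, which by Lemma~\ref{l:mn} equals $\pm\mathrm{id}_{L_n}$. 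This identity is exactly the statement that the Yoneda-type "iterated cone" of the sequence of morphisms $v_{2,1}, v_{3,2}, \dots, v_{1,n}$ — regarded as a twisted complex with vertices $L_1,\dots,L_{n-1}$ and differential given by the $v$'s — has $L_n$ as a direct summand via an idempotent that splits since $m_n$ composed appropriately gives the identity; in fact with a little care it gives $L_n$ on the nose, not merely as a summand.

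More precisely, I would proceed as follows. First, form the twisted complex
$$
T \;=\; \bigl(L_1 \xrightarrow{\,v_{2,1}\,} L_2[?] \xrightarrow{\,v_{3,2}\,}\cdots\xrightarrow{\,v_{n-1,n-2}\,} L_{n-1}[?]\bigr)
$$
with appropriate shifts chosen so that all the $v$'s have degree $1$ as components of the twisted differential; the relations from Lemma~\ref{l:HW} guarantee $m_2(v_{i,i-1},v_{i+1,i})=0$ (the compositions of two consecutive $v$'s vanish in $A$), so the only obstruction to $T$ being a genuine twisted complex comes from higher $m_k$. Since by Theorem~\ref{th:fukaya} $m_3=\cdots=m_{n-1}=0$, the Maurer–Cartan equation for $T$ reduces to checking that $m_n$ of the full string of $v$'s — a morphism $L_1\to L_{n-1}$-ish landing in the correct degree — is the only surviving higher term, and Lemma~\ref{l:mn} identifies it as a (shift of the) identity. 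Reading the Maurer–Cartan equation correctly, this says precisely that $T$ is a twisted complex whose total object is isomorphic to $L_n$ (shifted): the "wrap-around" term $m_n(v_{1,n},v_{2,1},\dots)=\pm\mathrm{id}_{L_n}$ is the differential component $L_{n-1}\to L_n\to L_1$ that closes up the complex into a resolution of $L_n$ by $L_1,\dots,L_{n-1}$. One then checks the degrees: the conditions \eqref{cond} on the $q_i$, namely $q_1+\cdots+q_n=n-2$ with each $q_i$ odd, are exactly what make the shifts consistent all the way around the cycle, so such a $T$ exists.

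I expect the main obstacle to be bookkeeping the shifts and signs so that the string of $v_{i,i-1}$'s assembles into an \emph{honest} twisted complex (satisfying Maurer–Cartan) rather than merely a "pre-twisted complex", and verifying that its convolution is $L_n$ rather than a cone on $\pm\mathrm{id}_{L_n}$ (which would be zero) — i.e.\ getting the direction of the wrap-around arrow right. The cleanest way to handle this is probably to argue at the level of the derived category $D\W(C)$: the iterated cones $\mathrm{Cone}(v_{2,1})$, then $\mathrm{Cone}$ of the induced map to $L_3$, etc., build an object $M$ in the triangulated subcategory generated by $L_1,\dots,L_{n-1}$; one then produces a morphism $M\to L_n$ (or $L_n\to M$) out of the remaining $v$ and shows, using the $m_n$ relation from Lemma~\ref{l:mn} and the fact that all other products vanish, that the cone on this morphism is acyclic, hence $L_n$ lies in the subcategory. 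Since Appendix~A establishes that $L_1,\dots,L_n$ together strictly generate $\W(C)$, adding $L_n$ to the list is harmless and Lemma~\ref{l:generate} follows.
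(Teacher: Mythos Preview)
Your approach differs substantively from the paper's, and contains a genuine gap. The paper's proof is purely geometric: it exhibits $C$ as an $n$-sheeted simple branched cover of $\C$ with $2n-2$ branch points (monodromies $(1\,2),(2\,3),\dots,(n{-}1\ n),(n{-}1\ n),\dots,(1\,2)$), checks that the resulting $2n-2$ thimbles are isotopic to $L_1,\dots,L_{n-1},L_{n-1},\dots,L_1$, and then invokes Theorem~\ref{thm:thimbles_generate} from Appendix~A. No algebra enters.

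The gap in your argument is the input ``$L_1,\dots,L_n$ generate $\W(C)$.'' You attribute this to Appendix~A (via Theorem~\ref{th:fukaya}), but Theorem~\ref{th:fukaya} \emph{uses} Lemma~\ref{l:generate} as its final ingredient, so that citation is circular; and Appendix~A only asserts that the \emph{thimbles of a chosen Lefschetz fibration} generate. Applying it requires constructing a fibration and identifying its thimbles --- which is exactly the content of the paper's proof, and that construction already yields $L_1,\dots,L_{n-1}$ directly, rendering your algebraic reduction superfluous. (A Riemann--Hurwitz count shows any Lefschetz fibration on the $n$-punctured sphere has at least $2n-2$ thimbles, so there is no fibration whose basis of thimbles is precisely $L_1,\dots,L_n$.) Your appeal to ``$m_3=\cdots=m_{n-1}=0$'' via Theorem~\ref{th:fukaya} is circular for the same reason, and in any case unnecessary: the Maurer--Cartan equation for your twisted complex holds automatically because the target spaces $A(X_{j+k-1},X_{j-1})$ vanish for $2\le k\le n-2$.

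Your underlying algebraic observation --- that $L_n$ is quasi-isomorphic to the convolution of $L_1\to\cdots\to L_{n-1}$ thanks to the $m_n$ relation of Lemma~\ref{l:mn} --- is correct. The paper records it immediately \emph{after} the proof of Lemma~\ref{l:generate} as a consequence, and the parallel argument appears on the B-side in the proof of Lemma~\ref{l:mndsg}. But it does not, on its own, establish generation; it only shows $L_n$ is redundant once generation by $L_1,\dots,L_n$ is already known from elsewhere.
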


\proof
Observe that $C$ can be viewed as an $n$-fold simple branched covering
of $\C$ with $2n-2$ branch points, around which the
monodromies are successively $(1\ 2),$ $(2\ 3),$ \dots, $(n-1\ n),$
$(n-1\ n),$ \dots, $(2\ 3),$ $(1\ 2)$; see Figure \ref{fig:cover}.
(Since the product of these
transpositions is the identity, the monodromy at infinity is trivial, and
it is easy to check that the $n$-fold cover we have described is indeed
an $n$-punctured $\PP^1$).

\begin{figure}[t]
\setlength{\unitlength}{4mm}
\begin{picture}(10,9.8)(-5,-0.5)
\psset{unit=\unitlength}
\psframe(-5,-0.5)(5,9)
\pscircle*(0,8){0.1}
\pscircle*(0,7){0.1}
\pscircle*(0,5){0.1}
\pscircle*(0,4){0.1}
\pscircle*(0,2){0.1}
\pscircle*(0,1){0.1}
\psline[linestyle=dotted](0,6.3)(0,5.7)
\psline[linestyle=dotted](0,3.3)(0,2.7)
\psline[linestyle=dotted](5.6,6.3)(5.6,5.7)
\psline[linestyle=dotted](5.6,3.3)(5.6,2.7)
\psline(0,8)(5,8)
\psline(0,7)(5,7)
\psline(0,5)(5,5)
\psline(0,4)(5,4)
\psline(0,2)(5,2)
\psline(0,1)(5,1)
\put(-0.25,8){\makebox(0,0)[rc]{\tiny $(1\ 2)$}}
\put(-0.25,7){\makebox(0,0)[rc]{\tiny $(2\ 3)$}}
\put(-0.25,5){\makebox(0,0)[rc]{\tiny $(n\!-\!1\ n)$}}
\put(-0.25,4.2){\makebox(0,0)[rc]{\tiny $(n\!-\!1\ n)$}}
\put(-0.25,2){\makebox(0,0)[rc]{\tiny $(2\ 3)$}}
\put(-0.25,1){\makebox(0,0)[rc]{\tiny $(1\ 2)$}}
\put(5.2,1){\makebox(0,0)[lc]{\small $\delta_1$}}
\put(5.2,2){\makebox(0,0)[lc]{\small $\delta_2$}}
\put(5.2,4){\makebox(0,0)[lc]{\small $\delta_{n-1}$}}
\put(5.2,5){\makebox(0,0)[lc]{\small $\delta_n$}}
\put(5.2,7){\makebox(0,0)[lc]{\small $\delta_{2n-3}$}}
\put(5.2,8){\makebox(0,0)[lc]{\small $\delta_{2n-2}$}}
\pscurve[linestyle=dashed,dash=1pt 2pt](0,4)(-1.5,3)(-2.2,2)(-2,0.5)(-1,0.1)(0,0)
\psline[linestyle=dashed,dash=1pt 2pt](0,0)(5,0)
\put(-2.8,2.5){\makebox(0,0)[lc]{\small $\varepsilon$}}
\end{picture}
\caption{A simple branched cover $\pi:C\to\C$}
\label{fig:cover}
\end{figure}
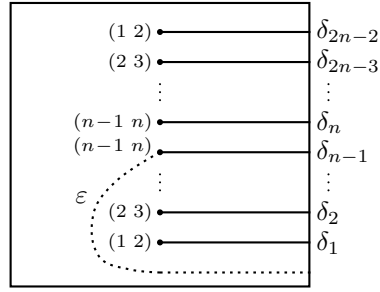

The $2n-2$ thimbles $\delta_1,\dots,\delta_{2n-2}$ are disjoint properly
embedded arcs in $C,$ projecting to the
arcs shown in Figure \ref{fig:cover}. We claim that
they are respectively isotopic to $L_1, \dots, L_{n-1},$
$L_{n-1},
 \dots, L_1$ in that order. Indeed, for $1\le i\le n-1,$ $\delta_i$ and
$\delta_{2n-1-i}$ both connect the $i^\mathrm{th}$ and $(i+1)^\mathrm{st}$
punctures of $C.$ Cutting $C$ open along all these arcs, we obtain
$n$ components, one of them (corresponding to the first sheet of the
covering near $-\infty$) a $(2n-2)$-gon bounded successively by
$\delta_1,\delta_2,\dots,\delta_{2n-2},$ while the $n-1$ others
(corresponding to sheets $2,\dots,n$ near $-\infty$) are strips
bounded by $\delta_i$ and $\delta_{2n-1-i}.$ From there it is not hard
to check that $\delta_i$ and $\delta_{2n-1-i}$ are both isotopic to $L_i$
for $1\le i\le n-1.$

The result then follows from Theorem \ref{thm:thimbles_generate}, which
asserts that the thimbles $\delta_1,\dots,\delta_{2n-2}$ strictly generate $\W(C).$
\endproof

Note that, by this result, $L_n$ could have been omitted entirely from the
discussion. 
To be more specific, an argument similar to that in Appendix A shows that,
up to a shift, $L_n$ is quasi-isomorphic to the complex
$$L_1\stackrel{u_{1,2}}{\longrightarrow}
L_2\stackrel{u_{2,3}}{\longrightarrow}\ \dots
\stackrel{u_{n-2,n-1}}{\longrightarrow} L_{n-1}.
$$
(Namely, consider a double branched 
cover as in Appendix A, and denote by $\gamma_i$ the curve obtained by
doubling the thimble $\delta_i$. 
The thimble $\varepsilon$ corresponding to the dotted
arc in Figure \ref{fig:cover} is isotopic to $L_n$. 
However, by Proposition 18.23 of \cite{seidel-book}, 
the curve obtained by doubling $\varepsilon$ is
isotopic to the image of $\gamma_{n-1}$ under the product of the Dehn
twists about $\gamma_{n-2},\dots,\gamma_1$, and can be interpreted as
an iterated mapping cone; the claim then follows from the same argument
as in the proof of Theorem \ref{thm:thimbles_generate}.)

We shall encounter this complex on the mirror side (see Equation \eqref{complex})  in the process of determining the
$A_{\infty}$ structure on the category of matrix factorizations.  In particular, we could replace Lemma \ref{l:mn}
with an argument modeled after that given for Lemma \ref{l:mndsg}.

\section{The Landau-Ginzburg mirror $(X(n), W)$}
\label{s:LGmodel}

In this section we describe mirror Landau-Ginzburg (LG) models
$W:X(n)\to\C$ for $n\ge 3.$ These mirrors are toric,
and their construction can be justified by a physics
argument due to Hori and Vafa \cite{HV}, see also
\cite[Section 3]{KKOY}.
(Mathematically, this construction can be construed as a duality between
toric Landau-Ginzburg models.)

Let us start with $\PP^1$ minus three points. In this case we can
realize our curve as a line in $(\C^{*})^{2}$ viewed as the
complement of three lines in $\PP^2.$
The Hori--Vafa procedure then gives us as mirror LG model a variety
$X(3)\subset \C^4$ defined by the equation
$$
\mathrm{x_1 x_2 x_3}=\exp(-t) \mathrm{p}
$$
with superpotential $W={\rm p}: X(3)\to \C,$ i.e. the mirror LG model $(X(3), W)$
is isomorphic to the affine space $\C^3$ with the superpotential
$W=\mathrm{x_1 x_2 x_3}.$

In the case $n=2k$ we can realize $C=\PP^1\backslash\{2k\ \text{points}\}$ as a curve  of bidegree
$(k-1, 1)$ in the torus $(\C^{*})^2$ considered as the open orbit of $\PP^1\times\PP^1.$
The raw output of the Hori--Vafa procedure is
a singular  variety $Y(2k)\subset \C^5$ defined by the equations
$$
\begin{cases}
\mathrm{y_1 \cdot y_4= y_3^{\mathit{k}-1}}\\
\mathrm{y_2 \cdot y_5= y_3}
\end{cases}
$$
with $y_3$ as a superpotential. The variety $Y(2k)$ is a 3-dimensional affine toric variety with coordinate algebra
$\C [ \mathrm{y_1, y_2, y_3 y_2^{-1}, y_3^{\mathit{k}-1} y_1^{-1}} ].$
A smooth mirror $(X(2k), W)$ can then be obtained by resolving the
singularities of $Y(2k).$
More precisely, $Y(2k)$ admits toric small resolutions. Any two
such resolutions are related to each other by flops, and thus yield LG
models which are
equivalent, in the sense that they have equivalent  categories of D-branes of type B (see \cite{KKOY}).

If $n$ is odd we realize our curve as a curve in the Hirzebruch surface
$\mathbb{F}_1$.
All the calculations are similar.

Now we describe a mirror LG model $(X(n), W)$ directly.
Consider the lattice $N=\Z^3$ and the
fan $\Sigma_n$ in $N$ with the following maximal cones:
\begin{alignat*}{2}
& \sigma_{i,0}:=\left\langle (i,0,1),(i,1,1),(i+1,0,1)\right\rangle, &\quad 0\leq i < \left\lfloor
\frac{n-1}{2}\right\rfloor,\\
& \sigma_{i,1}:=\left\langle (i,1,1),(i+1,1,1),(i+1,0,1)\right\rangle, &\quad 0\leq i < \left\lfloor
\frac{n-2}{2}\right\rfloor.
\end{alignat*}

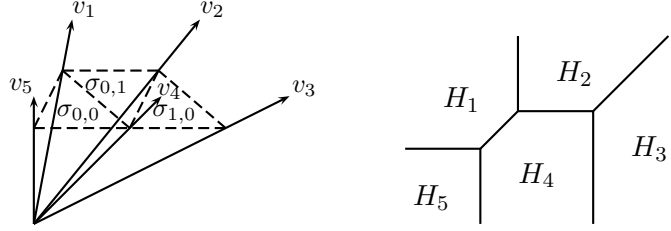
\begin{figure}[t]
\setlength{\unitlength}{17mm}
\begin{picture}(2.3,1.9)(0,0)
\psset{unit=\unitlength,dash=4pt 2pt}
\psline{->}(0,0)(0,1)  
\psline{->}(0,0)(0.3,1.6)  
\psline{->}(0,0)(1,1)  
\psline{->}(0,0)(1.3,1.6)  
\psline{->}(0,0)(2,1)  
\psline[linestyle=dashed](0,0.75)(0.225,1.2)
\psline[linestyle=dashed](0.225,1.2)(0.75,0.75)
\psline[linestyle=dashed](0,0.75)(1.5,0.75)
\psline[linestyle=dashed](0.225,1.2)(0.975,1.2)
\psline[linestyle=dashed](0.75,0.75)(0.975,1.2)
\psline[linestyle=dashed](1.5,0.75)(0.975,1.2)
\put(0.18,0.84){\small $\sigma_{0,0}$}
\put(0.93,0.84){\small $\sigma_{1,0}$}
\put(0.4,1.05){\small $\sigma_{0,1}$}
\put(0,1.02){\makebox(0,0)[rb]{\small $v_5$}}
\put(1.06,0.98){\makebox(0,0)[cb]{\small $v_4$}}
\put(2.02,1){\makebox(0,0)[lb]{\small $v_3$}}
\put(0.3,1.62){\makebox(0,0)[lb]{\small $v_1$}}
\put(1.3,1.62){\makebox(0,0)[lb]{\small $v_2$}}
\end{picture}
\qquad
\setlength{\unitlength}{1cm}
\begin{picture}(3.5,2.5)(0,0)
\psset{unit=\unitlength}
\psline(0,1)(1,1)(1,0)
\psline(1,1)(1.5,1.5)
\psline(1.5,2.5)(1.5,1.5)(2.5,1.5)(2.5,0)
\psline(2.5,1.5)(3.5,2.5)
\put(0.35,0.35){\makebox(0,0)[cc]{$H_5$}}
\put(1.75,0.65){\makebox(0,0)[cc]{$H_4$}}
\put(3.25,1){\makebox(0,0)[cc]{$H_3$}}
\put(2.25,2){\makebox(0,0)[cc]{$H_2$}}
\put(0.75,1.65){\makebox(0,0)[cc]{$H_1$}}
\end{picture}
\caption{The fan $\Sigma$ and the configuration of divisors $H_i$ (for $n=5$)}
\label{fig:LG}
\end{figure}

\noindent
Let $X(n):=X_{\Sigma_n}$ be the toric variety corresponding to the fan $\Sigma_n.$

We label the one-dimensional cones in $\Sigma_n$ as follows:
$$
{\mathrm v}_i:=(i-1,1,1),\quad 1\leq i\leq \left\lfloor\frac{n}{2}\right\rfloor,\quad {\mathrm v}_i=(n-i,0,1),
\quad \left\lfloor\frac{n}{2}\right\rfloor+1\leq i\leq n.
$$
For simplicity, we set ${\mathrm v}_{i-n}:={\mathrm v}_i=:{\mathrm v}_{i+n}.$ Also, let
$H_i:=H_{\mathrm v_i}\subset X(n)$ be the toric divisor
corresponding to the ray ${\mathrm v}_i$ (see Figure \ref{fig:LG}).

The vector $\xi=(0,0,1)\in M=N^{\vee}$ is non-negative on each cone of
$\Sigma_n,$ and therefore it defines a function
$$
W=W_{\xi}:X(n)\to \C,
$$
which will be considered as the superpotential. By construction,
$W^{-1}(0)=\bigcup_{i=1}^n H_i.$

The LG model $(X(n),W)$ can be considered as a mirror to $C=\PP^1\setminus\{n\text{
points}\},$ by the argument explained above.

\begin{remark}
{\rm The construction of the LG model $(X(n),W)$ can also be motivated
from the perspective of the Strominger-Yau-Zaslow conjecture. Here again
we think of $C$ as a curve in a toric surface, namely we
write $C=\overline{C}\cap (\C^*)^2,$ where $\overline{C}$ is a rational
curve
in either $\PP^1\times\PP^1$ (for $n$ even) or the Hirzebruch surface
$\mathbb{F}_1$ (for $n$ odd). Then, by the main result of \cite{AAK},
$(X(n),W)$ is an SYZ mirror to the blowup
of $(\C^*)^2\times\C$ along $C\times \{0\}.$
}
\end{remark}

\section{The category of D-branes of type B in LG model $(X(n), W)$}
\label{s:Dbsing}

The aim of this section is to describe the category of D-branes of type B in the mirror symmetric
LG model $(X(n),W),$ and to show that it is equivalent to the derived category of the wrapped Fukaya category $\W(C)$
calculated in Section \ref{s:wrapped}.

There are two ways to define the category of D-branes of type B in  LG models.
Assuming that $W$ has a unique critical value at the origin, the first one is to take the triangulated category of singularities $D_{sg}(X_0)$ of
the singular fiber $X_0=W^{-1}(0),$ which is by definition the Verdier quotient of the bounded derived category
of coherent sheaves $D^b(\coh(X_0))$ by the full subcategory of perfect complexes
$\perf{X_0}.$

The other approach involves matrix factorizations.
We can define a triangulated category of matrix factorizations $MF(X,W)$  as follows.
First define a category $MF^{naive}(X,W)$ whose objects are
pairs
$$
\underline{T}:=\Bigl(
\xymatrix{
T_1 \ar@<0.6ex>[r]^{t_1} & T_0 \ar@<0.6ex>[l]^{t_0}
}
\Bigl),
$$
 where $T_1, T_0$ are locally free sheaves of finite rank on $X,$
and $t_1$ and  $t_0$ are
morphisms such that both compositions $t_1\cdot t_0$ and $t_0\cdot t_1$ are multiplication by $W.$
Morphisms in the category $MF^{naive}(X,W)$ are morphisms of pairs modulo null-homotopic morphisms,
 where a morphism of pairs $f:\underline{T}\to\underline{S}$  is a pair of morphisms
$f_1: T_1\to S_1$ and $f_0: T_0\to S_0$ such that
$f_1\cdot t_0=s_0\cdot f_0$ and $s_1\cdot f_1=f_0\cdot t_1,$
and a morphism $f$ is null-homotopic if there are two morphisms
$h_0: T_0\to S_1$ and $h_1: T_1\to S_0$ such that
$f_1=s_0 h_1 + h_0 t_1$ and $f_0=h_1 t_0 + s_1 h_0.$

The category $MF^{naive}(X,W)$ can be
endowed with a natural triangulated structure.
Now, we consider the full triangulated subcategory  of acyclic objects,
namely the subcategory
$Ac(X, W)\subset MF^{naive}(X,W)$ which consists of all convolutions of exact triples of matrix factorizations.
We define a triangulated category  of matrix factorizations $MF(X, W)$  on $(X, W)$
as the Verdier quotient of $MF^{naive}(X,W)$ by the subcategory of acyclic objects
$$
MF(X,W):=MF^{naive}(X,W)/Ac(X, W).
$$
This category will  also be called
triangulated category of D-branes of type B in the LG model $(X, W).$
It is proved in  \cite{Orlov2}
that there is an equivalence
\begin{equation}\label{MF_D_sg}
MF(X,W)\stackrel{\sim}{\lto} D_{sg}(X_0),
\end{equation}
where the functor \eqref{MF_D_sg} is defined by the rule
$\underline{T}\mapsto \coker(t_1)$ and  we can regard $\coker(t_1)$ as a sheaf on $X_0$ due to it being annihilated by $W$ as a sheaf on $X.$

In this section we use the first approach and work with the triangulated  category of singularities $D_{sg}(X_0).$
This category has a natural DG enhancement, which arises as the DG quotient of the natural DG enhancement
of $D^b(\coh (X_0))$ by the DG subcategory of perfect complexes $\perf{X_0}.$
This implies that the triangulated category of singularities $D_{sg}(X_0)$ has a natural minimal
$A_{\infty}$-structure which is quasi-equivalent to the DG enhancement described above.
Thus, in the following discussion we will consider the triangulated category of singularities $D_{sg}(X_0)$ with this natural
$A_{\infty}$-structure.

The singular fiber $X_0$ of $W$ is the union of
the toric divisors in
$X(n).$ Consider the structure sheaves $E_i:=\cO_{H_i}$ as objects of the category $D_{sg}(X_0).$

\begin{theo} \label{th:B-side-mn} Let $(X(n), W)$ be the LG model described above.
Then the triangulated category of singularities $D_{sg}(X_0)$ of the singular fiber
$X_0=W^{-1}(0)$ is strictly generated by $n$ objects $E_1,\dots, E_n$ and
there is a natural isomorphism of  algebras
$$
\bigoplus_{\substack{i,j}}\Hom_{D_{sg}(X_0)}(E_i,E_j)\cong \bigoplus_{i,j} A(X_i,X_j),
$$
where $A$ is the category defined in \eqref{eq:defA}.

Moreover,
the $A_{\infty}$-structure on $\bigoplus_{\substack{i,j}}\Hom_{D_{sg}(X_0)}(E_i,E_j)$ is strictly homotopic to $m^{(1,1)}.$
\end{theo}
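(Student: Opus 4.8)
The plan is to mirror, on the $B$-side, the three-step strategy already carried out for the wrapped Fukaya category in Section~\ref{s:wrapped}: first identify the endomorphism algebra $\bigoplus_{i,j}\Hom_{D_{sg}(X_0)}(E_i,E_j)$ with the $k$-linear category $A$ of \eqref{eq:defA}, then prove strict generation of $D_{sg}(X_0)$ by $E_1,\dots,E_n$, and finally pin down the induced $A_\infty$-structure using the classification of Proposition~\ref{classification}, i.e.\ by computing the single coefficient pair $(a,b)$ in \eqref{a,b}. The first step is a direct local computation: each $H_i$ is a toric divisor, $X_0=\bigcup H_i$ is a cyclically-arranged chain of surfaces glued along the toric curves $H_i\cap H_{i+1}$, and one computes $\Hom_{D_{sg}(X_0)}(E_i,E_j)$ by taking a locally free resolution of $\cO_{H_i}$ over $\cO_{X_0}$ (equivalently, by passing through the matrix factorization description \eqref{MF_D_sg} and writing $\cO_{H_i}$ as the cokernel of the matrix factorization of $W$ cut out by the single monomial vanishing on $H_i$). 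Because $W$ near a point of $H_i\cap H_{i+1}$ looks like $xyz$ in suitable toric coordinates, the $2$-periodic $\Ext$-groups are readily seen to be generated by the classes I will call $x_i,y_i,u_{i,i+1},v_{i,i-1}$, with all the relations $x_iy_i=0$ etc.\ holding for exactly the same reason as in the pair-of-pants computation of \cite{sheridan}; morphisms between $E_i$ and $E_j$ with $|i-j|\ge 2$ vanish since the supports are disjoint. One must also check the products $u_{i-1,i}\circ v_{i,i-1}=x_i$ and $v_{i,i-1}\circ u_{i-1,i}=y_{i-1}$, which again is a local calculation at the intersection point.

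For strict generation, the plan is to use the structure of $X_0$ as a toric surface configuration: $\cO_{X_0}$ and the structure sheaves of the compact toric curves and points lying in $X_0$ all become zero in $D_{sg}(X_0)$ if they are perfect on $X_0$, and one exhibits Koszul-type resolutions expressing any coherent sheaf on $X_0$ (up to perfect complexes) as built from the $E_i$ by extensions and cones. Concretely, I would first note that $\cO_{X_0}$ itself is perfect, hence zero in $D_{sg}(X_0)$, and then for each toric stratum produce an explicit resolution by the $E_i$; alternatively one can invoke that $D_{sg}(X_0)$ is generated by the structure sheaves of the irreducible components of $X_0$ together with sheaves supported on the singular locus, and then resolve the latter. (This is the algebraic counterpart of Lemma~\ref{l:generate}, where the analogous statement on the symplectic side followed from the branched-cover picture.) The cyclic relation among the $E_i$, analogous to the complex $L_1\to\cdots\to L_{n-1}$ displayed after Lemma~\ref{l:generate}, will reappear here as Equation~\eqref{complex}: one shows $E_n$ is quasi-isomorphic in the category of matrix factorizations to the twisted complex $E_1\xrightarrow{u_{1,2}}E_2\to\cdots\to E_{n-1}$, so that $E_1,\dots,E_{n-1}$ already strictly generate.

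The final and most delicate step is the determination of the $A_\infty$-structure. By Lemma~\ref{l:gradedCW}'s $B$-side analogue the morphism algebra carries a $\Z$-grading satisfying \eqref{cond}, so Proposition~\ref{classification} applies and the structure is classified by the pair $(a,b)$ of \eqref{a,b}; moreover $m_3=\cdots=m_{n-1}=0$ for degree reasons, exactly as on the $A$-side. Thus it suffices to compute the single higher product $m_n(u_{n,1},u_{n-1,n},\dots,u_{1,2})$ and its $v$-counterpart, each of which lands in $k\cdot\id_{E_1}$. The plan here is to compute $m_n$ directly from the DG enhancement of $D_{sg}(X_0)$: represent the $E_i$ by their matrix factorizations, compute the $A_\infty$-transfer (homotopy-transfer / Merkulov) formula for the $n$-fold composition of the chosen cocycles $u_{i,i+1}$, and check that the resulting element of $\Hom^0(E_1,E_1)$ has constant coefficient $1$ (and likewise the $v$-composition gives a nonzero scalar). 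This is the content of Lemma~\ref{l:mndsg} referred to in the excerpt; the cleanest route is to work with the explicit cyclic complex \eqref{complex}: the composition $u_{n,1}\circ\cdots\circ u_{1,2}$ being a priori zero on the nose, the higher product $m_n$ is precisely the obstruction class of this nullhomotopy, and unwinding the twisted-complex differential of \eqref{complex} identifies it with $\id_{E_1}$ up to a sign $(-1)^?$. I expect this last verification---getting the scalar, and in particular the sign, correct---to be the main obstacle, since it requires careful bookkeeping of the matrix-factorization homotopies and of Koszul signs; but once $(a,b)=(1,\pm1)$ is established, the same automorphism of $\widetilde A$ used at the end of Section~\ref{s:wrapped} (sending $v_{i,i-1}\mapsto -v_{i,i-1}$, $x_i\mapsto -x_i$) converts $m^{1,\pm1}$ into $m^{1,1}$, completing the proof.
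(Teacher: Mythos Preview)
Your overall three-step strategy matches the paper's, and your plan for the $A_\infty$-structure via the twisted complex \eqref{complex} is close to what the paper does in Lemma~\ref{l:mndsg}. However, there is one genuine geometric error and one unnecessary complication.

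\textbf{The geometric error.} You assert that $\Hom_{D_{sg}(X_0)}(E_i,E_j)=0$ for $|i-j|\ge 2$ ``since the supports are disjoint.'' This is false for $n\ge 4$: the toric divisors $H_i$ are \emph{not} arranged in a simple cycle, and many non-adjacent pairs do intersect. For instance when $n=5$ (Figure~\ref{fig:LG}), the divisor $H_4$ meets $H_1$ and $H_2$ as well as $H_3$ and $H_5$. So the vanishing of $\Hom(E_i,E_j)$ for such pairs is not a matter of disjoint support and requires a real argument. The paper's computation shows that when $H_i\cap H_j=\varGamma_{ij}\ne\emptyset$ with $j\notin\{i,i\pm 1\}$, one has $\varGamma_{ij}\cong\PP^1$ with normal bundles $\cO_{\PP^1}(-1)$ in both $H_i$ and $H_j$, so that the relevant cokernel is $\cO_{\varGamma_{ij}}(H_i)\cong\cO_{\PP^1}(-1)$, which has no cohomology. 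This is a genuinely global fact about the toric resolution $X(n)$, not a local ``$xyz$'' computation, and your proposal as written would not catch it. The same misunderstanding colours your endomorphism computation: for several $i$ the curve $D_i=\bigcup_j\varGamma_{ij}$ is not just two copies of $\bbA^1$ meeting at a point, but a chain containing one or two $\PP^1$'s; the paper checks that in every case the ring of regular functions is still $k[x_i,y_i]/(x_iy_i)$.

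\textbf{The unnecessary complication.} For the $A_\infty$-structure you aim to compute $m_n(u_{n,1},\dots,u_{1,2})$ and its $v$-analogue exactly, worrying about signs at the end. The paper sidesteps this entirely: by the evident symmetry exchanging $u$'s and $v$'s one has $a=b$, and by Remark~\ref{rm:structures} any $m^{a,b}$ with $a,b\ne 0$ is equivalent to $m^{1,1}$. So it suffices to show $a\ne 0$. The paper does this by proving that the convolution of \eqref{complex} is isomorphic to $E_n$ and that both $\overline{u}_{n,1}:E_n\to\cC_{n-1}$ and $\overline{u}_{n-1,n}:\cC_{n-1}\to E_n$ are isomorphisms in $D_{sg}(X_0)$; their composite $m_2^{\mathrm{Tw}}(\overline{u}_{n-1,n},\overline{u}_{n,1})=m_n(u_{n-1,n},\dots,u_{n,1})$ is then an automorphism of $E_n$, hence has nonzero constant term. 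No homotopy-transfer bookkeeping or sign chase is needed. Your proposed route via explicit Merkulov formulas would work in principle but is substantially harder, and the sign issue you flag as ``the main obstacle'' simply does not arise in the paper's argument.
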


Each object $E_i=\cO_{H_i},$ being the cokernel of the morphism $\cO_{X(n)}(-H_i)\to \cO_{X(n)},$
is a Cohen-Macaulay sheaf on the fiber $X_0.$ Hence by Proposition 1.21 of \cite{Orlov}
we have
$$
\Hom_{D_{sg}(X_0)}(E_i, E_j[N])\cong\Ext^N_{X_0}(E_i, E_j)
$$
for any $N>\dim X_0=2.$
Since the shift by $[2]$ is isomorphic to identity, this allows us to determine morphisms between these objects in
$D_{sg}(X_0)$ by calculating $\Ext$'s between them in the category of coherent sheaves.
Hence, if $H_i\cap H_j=\emptyset,$ then $\Hom_{D_{sg}(X_0)}^{\bdot}(E_i,E_j)=0.$

Assume that $H_i\cap H_j\ne \emptyset,$ and
denote by $\varGamma_{ij}$ the curve that is the intersection of $H_i$ and $H_j.$
Consider the 2-periodic locally free resolution of $\cO_{H_i}$ on $X_0,$
$$
\{\cdots\lto\cO_{X_0}\lto\cO_{X_0}(-H_i)\lto\cO_{X_0}\}\lto\cO_{H_i}\lto 0.
$$
Now the groups $\Ext^N_{X_0}(E_i, E_j)$ can be calculated as the hypercohomology of
the 2-periodic complex
$$
0\lto \cO_{H_j}\stackrel{\phi_{ij}}{\lto}\cO_{H_j}(H_i)\stackrel{\psi_{ij}}{\lto}\cO_{H_j}\lto\cdots
$$

We first consider the case where $j=i$: then $\phi_{ii}=0,$ and the morphism $\psi_{ii}$ is isomorphic to the canonical map
$\cO_{H_i}(-D_i)\to\cO_{H_i},$ where $D_i=\bigcup _j \varGamma_{ij}.$ Hence the cokernel
of $\psi_{ii}$ is the structure sheaf $\cO_{D_i}.$ This implies that $\Hom_{D_{sg}(X_0)}^{\bdot}(E_i,E_i)$
is concentrated in even degree and
the algebra $\Hom_{D_{sg}(X_0)}^{0}(E_i,E_i)$ is isomorphic to the algebra of regular functions on $D_i.$
However, $D_i$ consists of either two $\bbA^1$ meeting at one point, two $\bbA^1$ connected by a $\bbP^1,$ or
two $\bbA^1$ connected by a chain of two $\bbP^1$ (see Figure \ref{fig:LG}). In all cases,
the algebra of regular functions is isomorphic to $\kk[x_i, y_i]/(x_i y_i).$

On the other hand, when $j\ne i$\/ we must have $\psi_{ij}=0,$ and the cokernel of $\phi_{ij}$ is isomorphic to $\cO_{\varGamma_{ij}}(H_i).$
When $j\not\in \{i,i\pm 1\}$ the curve $\varGamma_{ij}$ is isomorphic to $\PP^1,$ and moreover the normal bundles to $\varGamma_{ij}$ in
$H_i$ and in $H_j$ are both isomorphic to $\cO_{\PP^1}(-1).$ Hence $\cO_{\varGamma_{ij}}(H_i)\cong\cO_{\PP^1}(-1)$
and we obtain that $\Hom_{D_{sg}(X_0)}^{\bdot}(E_i,E_j)$ is trivial.

When $j=i+1,$ the curve $\varGamma_{ij}$ is isomorphic to $\bbA^1$ and $\Hom_{D_{sg}(X_0)}^{\bdot}(E_i,E_j)$
is concentrated in odd degree. Moreover,
$\Hom_{D_{sg}(X_0)}(E_i,E_j[1])$ is isomorphic to $H^0(\cO_{\varGamma_{ij}}).$ Therefore,
it is generated by a morphism $u_{i, i+1}:E_i\to E_{i+1}[1]$ as a right module over $\End(E_i)$ and as a left module over $\End(E_{i+1}),$
and there are isomorphisms
$$
\Hom_{D_{sg}(X_0)}(E_i,E_{i+1}[1])\cong \kk[x_{i+1}] u_{i, i+1}= u_{i, i+1} \kk[y_i].
$$

Analogously, if $j=i-1$ then there is a morphism $v_{i, i-1}: E_i\to E_{i-1}[1]$ such that
$$
\Hom_{D_{sg}(X_0)}(E_i,E_{i-1}[1])\cong \kk[y_{i-1}] v_{i, i-1}= v_{i, i-1} \kk[x_i].
$$
It is easy to check that the composition $v_{i+1, i} u_{i, i+1}$ is equal to $y_i$ and $u_{i, i-1} v_{i, i-1}=x_i.$

Hence, we obtain an isomorphism of super-algebras
$$
\bigoplus_{\substack{i,j}}\Hom_{D_{sg}(X_0)}(E_i,E_j)\cong \bigoplus_{i,j} A(X_i,X_j)
$$
This proves the first part of the Theorem.

We claim that the $\Z/2$-graded algebra
$\bigoplus_{i,j}\Hom_{D_{sg}(X_0)}(E_i, E_j)$ admits natural lifts to $\Z$-grading, parameterized by
vectors $\xi\in N$ such that $\langle \xi,l\rangle=1$ where $l=(0,0,1).$ Indeed, each such element defines an even grading $2\xi$ on the algebra $\C[N\otimes\C^*]$ of functions on the torus,
with the property that $\deg(W)=2.$ Fixing trivializations of all line bundles restricted to the torus, we then obtain the desired grading. It is easy to check that the resulting grading on cohomology  satisfies \eqref{cond}.

Now let us calculate the induced $A_{\infty}$-structure on the algebra $\bigoplus_{\substack{i,j}}\Hom_{D_{sg}(X_0)}(E_i,E_j).$
By Proposition \ref{classification} it suffices to compute the numbers
$$
a=m_n(u_{i-1,i},u_{i-2,i-1},\dots,u_{i,i+1})(0),\quad b=m_n(v_{i+1,i},v_{i+2,i+1},\dots,v_{i,i-1})(0).
$$
We have $a=b$ by symmetry, and by Remark \ref{rm:structures} it is sufficient to show that $a\ne 0.$

\begin{lemma}\label{l:mndsg} In the category $D_{sg}(X_0)$ we have
$a=m_n(u_{i-1,i},u_{i-2,i-1},\dots,u_{i,i+1})(0)\ne 0.$
\end{lemma}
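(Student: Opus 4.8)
The plan is to compute the coefficient $a$ by realising the relevant Massey product as an honest composition of morphisms in $D_{sg}(X_0)$, using a twisted-complex model. Fix $i=1$, and (by Proposition~\ref{classification}) choose the $A_\infty$-structure in the form $m^{a,b}$, so that $m_2$ is the composition of $A$, $m_3=\dots=m_{n-1}=0$, and $a=m_n(u_{n,1},u_{n-1,n},\dots,u_{1,2})(0)$; we must show $a\ne 0$. Irrelevant shifts of $E_1$ are suppressed below.

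\smallskip\noindent\emph{Step 1: a twisted complex.} Over this $A_\infty$-category form the one-sided twisted complex
\[
\mathcal{T}:=\bigl[\,E_2\xrightarrow{\ u_{2,3}\ }E_3\xrightarrow{\ u_{3,4}\ }\cdots\xrightarrow{\ u_{n-1,n}\ }E_n\,\bigr].
\]
It is legitimate: the consecutive products $u_{j+1,j+2}\circ u_{j,j+1}$ vanish in $A$, and the only higher product among the $u$'s --- the length-$n$ cyclic one computed by $m_n$ --- cannot occur on a chain with only $n-2$ arrows, since $m_3=\dots=m_{n-1}=0$. Let $\iota\colon E_n\to\mathcal{T}$ and $q\colon\mathcal{T}\to E_2$ be the closed inclusion of the top term and the closed projection onto the bottom term, and let $h\colon E_1\to\mathcal{T}$ and $g\colon\mathcal{T}\to E_1$ be the closed morphisms whose only nonzero component is $u_{1,2}$, respectively $u_{n,1}$; then $q\circ h=u_{1,2}$ and $g\circ\iota=u_{n,1}$. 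Composition in the category of twisted complexes gives $g\circ h=\sum_{b\ge0}m_{b+2}(g,\delta_{\mathcal{T}}^{\otimes b},h)$, where $\delta_{\mathcal{T}}$ is the internal differential of $\mathcal{T}$; since $h$ outputs only into $E_2$ and $g$ takes input only from $E_n$, the unique nonzero summand is the one with $b=n-2$, and it equals $m_n(u_{n,1},u_{n-1,n},\dots,u_{1,2})$. Hence, up to an overall sign,
\[
a\cdot\id_{E_1}\;=\;m_n(u_{n,1},u_{n-1,n},\dots,u_{1,2})\;=\;g\circ h\qquad\text{in }D_{sg}(X_0).
\]

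\smallskip\noindent\emph{Step 2: identification of $\mathcal{T}$.} The claim is that $\mathcal{T}\cong E_1$ in $D_{sg}(X_0)$ up to shift; equivalently, that the full chain $[E_1\to E_2\to\cdots\to E_n]$ built from $u_{1,2},\dots,u_{n-1,n}$ is a perfect complex, hence zero in $D_{sg}(X_0)$. This is the mirror of the fact noted after Lemma~\ref{l:generate} that $L_n$ is the iterated cone of $L_1,\dots,L_{n-1}$, and it also yields the strong-generation statement in Theorem~\ref{th:B-side-mn}. It is proved by a direct computation with structure-sheaf exact sequences on the singular fiber $X_0=\bigcup_jH_j$, of the same kind as those used above to compute $\bigoplus\Hom_{D_{sg}}(E_i,E_j)$: a Mayer--Vietoris (\v{C}ech) resolution expressing $\cO_{X_0}$ in terms of the $\cO_{H_j}$ and the $\cO_{\varGamma_{ij}}$, the two-term presentations $\cO_{\varGamma_{j,j+1}}\simeq[\cO_{H_j}(-\varGamma_{j,j+1})\to\cO_{H_j}]$, the vanishing of the perfect sheaf $\cO_{X_0}$ in $D_{sg}(X_0)$, and the triviality there of the occurring line bundles (which follows from $\cO_X(X_0)\cong\cO_X$ and $\cO_X(H_j)|_{H_j}\cong\cO_{H_j}(-D_j)$) together allow one to peel off the divisors one at a time and identify the iterated cone of the $u_{j,j+1}$ with $\cO_{X_0}$ up to shift. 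Carrying out this bookkeeping --- tracking all shifts and twists, including the chains of $\bbP^1$'s inside the $D_j$ and the triple point when $n=3$ --- is the main technical obstacle.

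\smallskip\noindent\emph{Step 3: conclusion.} Fix an isomorphism $\alpha\colon\mathcal{T}\xrightarrow{\ \sim\ }E_1$ and set $\beta:=\alpha\circ h$ and $\gamma:=g\circ\alpha^{-1}$, both elements of $\End^\bullet_{D_{sg}(X_0)}(E_1)\cong k[x_1,y_1]/(x_1y_1)$. By Step~1, $a\cdot\id_{E_1}=g\circ h=(g\circ\alpha^{-1})\circ(\alpha\circ h)=\gamma\circ\beta$. Now $q\circ\alpha^{-1}$ lies in $\Hom_{D_{sg}(X_0)}(E_1,E_2)$, which by Theorem~\ref{th:B-side-mn} is the cyclic right $\End(E_1)$-module generated by $u_{1,2}$; writing $q\circ\alpha^{-1}=u_{1,2}\circ s$, the identity $u_{1,2}=q\circ h=(q\circ\alpha^{-1})\circ\beta=u_{1,2}\circ(s\circ\beta)$ shows that $s\circ\beta$ differs from $\id_{E_1}$ by an element annihilating $u_{1,2}$ on the right. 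That right annihilator is a proper homogeneous ideal of $k[x_1,y_1]/(x_1y_1)$ (since $u_{1,2}\circ\id_{E_1}\ne0$), hence is contained in the augmentation ideal $(x_1,y_1)$; so $s\circ\beta$, and in particular $\beta$, has nonzero constant coefficient. Symmetrically, from $g\circ\iota=u_{n,1}$ and the left $\End(E_1)$-module structure of $\Hom_{D_{sg}}(E_n,E_1)$, the element $\gamma$ has nonzero constant coefficient. Since the constant-coefficient map $k[x_1,y_1]/(x_1y_1)\to k$ is a ring homomorphism, $a=(\gamma\circ\beta)(0)=\gamma(0)\,\beta(0)\ne0$, which proves the lemma.
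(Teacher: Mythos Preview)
Your overall architecture coincides with the paper's: form the twisted complex on $n-1$ of the $E_j$'s, identify its convolution with the remaining one, and recognise the product $m_2^{\mathrm{Tw}}$ of the two bridging $u$-morphisms as $m_n(u_{\,\cdot\,},\dots,u_{\,\cdot\,})$. Steps~1 and~3 are fine; the identity $g\circ h=m_n(u_{n,1},\dots,u_{1,2})$ is exactly Equation~(3.20) of \cite{seidel-book}, and your Step~3 argument (using that $\Hom(E_1,E_2)$ is cyclic over $\End(E_1)$ and that the right annihilator of $u_{1,2}$ is the proper ideal $(x_1)$) is correct and pleasantly soft: it shows $\beta(0)\neq 0$, hence $h$ is invertible, from the \emph{abstract} isomorphism $\mathcal{T}\cong E_1$ alone.

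The gap is Step~2, which you yourself flag as ``the main technical obstacle'' and do not carry out. The Mayer--Vietoris/\v{C}ech picture you sketch is in the right spirit but is not a proof: one must actually identify the successive cones with structure sheaves of partial unions $\bigcup_{j>k}H_j$ \emph{up to a twist}, and track that the connecting maps induced by the $u_{j,j+1}$ are the natural restriction maps. The paper does this by writing down an explicit divisor $L$ on $X(n)$ with the property that $\cO_{X_0}(L-H_1-\cdots-H_k)|_{H_{k+1}}$ is trivial for all $k$, then using the short exact sequences $0\to\cO_{\bigcup_{j\neq i}H_j}(L-H_1-\cdots-H_i)\to\cO_{X_0}(L-H_1-\cdots-H_{i-1})\to\cO_{H_i}(L-H_1-\cdots-H_{i-1})\to 0$ to identify $u_{i,i+1}$ (in $D_{sg}$) with a restriction map, and proving by induction that the convolution $\mathcal{C}_k$ of $E_1\to\cdots\to E_k$ is $\cO_{H_{k+1}\cup\cdots\cup H_n}$ up to the twist. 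Without something playing the role of $L$, your ``triviality there of the occurring line bundles'' is an assertion, not an argument. Once Step~2 is done in this explicit way, your Step~3 becomes unnecessary: the induction already shows that $h$ and $g$ themselves are isomorphisms, so $g\circ h$ is an automorphism and $a\neq 0$ follows immediately. Your Step~3 would genuinely buy something only if you had an independent, softer proof of $\mathcal{T}\cong E_1$.
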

\begin{proof}
Consider the complex of objects in the category $D_{sg}(X_0):$
\begin{equation}\label{complex}
E_1[1-n]\lto E_2[2-n]\lto\dots\lto E_{n-1}[-1],
\end{equation}
where the maps are $u_{i,i+1},$ $1\leq i\leq n-2,$ and we place $E_{n-1}[-1]$ in degree zero.

The convolution of \eqref{complex} is well defined up to an isomorphism. It is isomorphic to $E_n.$
To see this, introduce the divisor
$$
L:= \sum\limits_{k=1}^{\lfloor\frac{n}2\rfloor}\binom {k-1}{2}H_k+\sum\limits_{k=\lfloor\frac{n}2\rfloor+1}^n\left(\binom {n-k}{2}-1\right)H_k.
$$
It is straightforward to check that for $i\geq 0$ the restriction of
$\cO_{X_0}(L-H_1-\dots-H_i)$ to $H_{i+1}$ is trivial.
Moreover, the morphism $u_{i,i+1}:E_i\to E_{i+1}[1]$ for $i\geq 1$ can be interpreted as follows.
Let $f: E_i\cong \cO_{H_i}(L-H_1-\dots-H_{i-1})\to \cO_{\bigcup_{j\ne i}H_j}(L-H_1-\dots-H_i)[1]$
be the morphism corresponding to the extension:
$$
0\to \cO_{\bigcup_{j\ne i}H_j}(L-H_1-\dots-H_i)\to \cO_{X_0}(L-H_1-\dots-H_{i-1})\to
\cO_{H_i}(L-H_1-\dots-H_{i-1})\to 0.
$$
Then $Cone(f)$ is a perfect complex, so $f$ is invertible in $D_{sg}(X_0).$
Let $g$ be the projection
$$
\cO_{\bigcup_{j\ne i}H_j}(L-H_1-\dots-H_i)[1]\lto \cO_{H_{i+1}}(L-H_1-\dots-H_i)[1].
$$
Then $u_{i,i+1}=gf^{-1}.$

By induction, we now see that, for all $1\leq k\leq n-1,$ the following two
properties hold:
\begin{enumerate}
\item the convolution $\cC_k$ of
$\displaystyle
E_1[1-n]\stackrel{u_{1,2}}{\lto} E_2[2-n]\lto\dots
\stackrel{u_{k-1,k}}{\lto} E_{k}[k-n]
$
is isomorphic to $\cO_{H_{k+1}\cup\dots\cup H_n}(L-H_1-\dots-H_k)[k+1-n],$
and
\item the restriction map from $\cO_{H_{k+1}\cup\dots
\cup H_n}(L-H_1-\dots-H_k)[k+1-n]$ (which is isomorphic to $\cC_k$) to $\cO_{H_{k+1}}(L-H_1-\dots-H_k)[k+1-n]
\simeq E_{k+1}[k+1-n]$ corresponds to the morphism
$u_{k,{k+1}}:E_k[k-n]\to
E_{k+1}[k+1-n].$
\end{enumerate}

We conclude that $E_n$ is isomorphic to the
convolution $\cC_{n-1}$ of \eqref{complex}, and that the map from
$\cC_{n-1}$ to $E_n$ induced by $u_{n-1,n}:E_{n-1}[-1]\to E_n$ is
an isomorphism.

Moreover, it is not hard to check that the map from
$E_n$ to \,$\cC_{n-1}$ induced by $u_{n,1}:E_n\to E_1$ is also an isomorphism,
for instance by using an argument similar to the above one to show that
the convolution of
$$E_n[-n]\stackrel{u_{n,1}}{\lto} E_1[1-n]\stackrel{u_{1,2}}{\lto}
E_2[2-n]\lto\dots\stackrel{u_{n-2,n-1}}{\lto} E_{n-1}[-1]$$ is the zero
object.

We claim this implies that $m_n(u_{n-1,n},u_{n-2,n-1},\dots,u_{1,2},
u_{n,1})(0)\neq 0.$ The easiest way to see this
is to use the language of {\it twisted complexes} (see e.g.\ Section 3 of
\cite{seidel-book}). Recall that twisted complexes are a generalization of
complexes in the context of $A_\infty$-categories, for which they provide
a natural triangulated enlargement. The philosophy
is that, in the $A_\infty$ setting, compositions of maps can only
be expected to vanish up to chain homotopies which are explicitly provided
as part of the twisted complex; see Section $3l$ of \cite{seidel-book} for
the actual definition. In our case, the higher compositions
of the morphisms within the complex \eqref{complex} are all zero (since the relevant
morphism spaces are zero), so \eqref{complex} defines a twisted complex
without modification; we again denote this twisted complex by $\cC_{n-1}.$
Moreover, the maps $u_{n,1}$ and $u_{n-1,n}$ induce morphisms of twisted
complexes $\overline{u}_{n,1}\in
\Hom^{\mathrm{Tw}}(E_n,\cC_{n-1})$ and $\overline{u}_{n-1,n}\in
\Hom^{\mathrm{Tw}}(\cC_{n-1},E_n),$ and
by the above argument these are isomorphisms.
Thus the composition
$m_2^{\mathrm{Tw}}(\overline{u}_{n-1,n},\overline{u}_{n,1})$ is an
automorphism of $E_n$; hence the coefficient of $\id_{E_n}$ in this composition is non-zero.
However, by definition of the product in the $A_\infty$-category
of twisted complexes \cite[Equation 3.20]{seidel-book},
$$m_2^{\mathrm{Tw}}(\overline{u}_{n-1,n},\overline{u}_{n,1})=
m_n(u_{n-1,n},u_{n-2,n-1},\dots,u_{1,2},u_{n,1}).$$
It follows that $a\neq 0.$
\end{proof}

The final ingredient needed for Theorem \ref{th:B-side-mn} is the following
generation statement:

\begin{lemma} \label{lem:strict_gen_dbsing}
The objects $E_1,\dots,E_n$ generate the triangulated category $D_{sg}(X_0)$ in the strict sense,
i.e.\ the minimal triangulated subcategory of $D_{sg}(X_0)$ that contains $E_1,\dots,E_n$ coincides
with the whole $D_{sg}(X_0).$
\end{lemma}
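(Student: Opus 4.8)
The plan is to show that the structure sheaf $\cO_{X_0}$, together with the structure sheaves of all closed toric strata, generate $D_{sg}(X_0)$, and then that each $E_i$ already suffices by an inductive argument using the geometry of the chain of divisors. First I would recall that the $E_i=\cO_{H_i}$ are the building blocks, and that $X_0=\bigcup_i H_i$ is a normal crossings configuration whose dual graph is a cycle (each $H_i$ meets $H_{i\pm1}$ in a curve $\varGamma_{i,i\pm1}$ and the $H_i\cap H_j$ for $|i-j|\geq 2$ are either empty or a single $\PP^1$, according to the combinatorics of $\Sigma_n$). The key structural fact is that $D_{sg}(X_0)$ is generated (in the strict triangulated sense, no summands) by the structure sheaves of the irreducible components of $X_0$: this is an instance of the general principle that for a reduced scheme with a closed cover $X_0=Z_1\cup\dots\cup Z_r$ by effective Cartier divisors, the Mayer--Vietoris-type triangles express $\cO_{X_0}$ and all intersection strata in terms of the $\cO_{Z_i}$ modulo perfect complexes; more precisely one uses the Koszul-type resolutions $0\to\cO_{X_0}(-\sum H_i)\to\cdots$ to cut down. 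Since $\cO_{X_0}$ is itself perfect (it is the free module of rank one), it is the zero object in $D_{sg}(X_0)$, so the strata impose relations rather than new generators.

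Concretely, I would argue as follows. Let $\mathcal{T}\subset D_{sg}(X_0)$ be the minimal triangulated subcategory containing $E_1,\dots,E_n$. For each $k$, the short exact sequence
$$
0\to \cO_{\bigcup_{j\ge k+1}H_j}\to \cO_{\bigcup_{j\ge k}H_j}\to \cO_{H_k}\to 0
$$
(suitably twisted, as in the proof of Lemma \ref{l:mndsg}) shows that $\cO_{\bigcup_{j\geq k+1}H_j}$ lies in $\mathcal{T}$ whenever $\cO_{\bigcup_{j\geq k}H_j}$ and $E_k$ do; running this from $k=1$, and using that $\cO_{X_0}=\cO_{\bigcup_{j\geq 1}H_j}\cong 0$ in $D_{sg}(X_0)$, one peels off the components one at a time and finds that all partial unions $\cO_{H_{k}\cup\dots\cup H_n}$, and in particular every intersection curve's structure sheaf (as a further quotient), lie in $\mathcal{T}$. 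Thus $\mathcal{T}$ contains $\cO_Z$ for every toric stratum $Z\subseteq X_0$. It then remains to see that these strata sheaves generate all of $D_{sg}(X_0)$.

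For the last step I would invoke the general structure theory of categories of singularities of such toric configurations: since $X_0$ has only Cohen--Macaulay (in fact hypersurface) singularities along the toric strata, every coherent sheaf on $X_0$ admits, after finitely many steps of taking syzygies, a filtration whose subquotients are (twists of) structure sheaves of strata — this follows from a dévissage with respect to the stratification, using that sheaves supported on a stratum can be built from $\cO_Z$ by extensions and twists, and that in $D_{sg}$ line-bundle twists on smooth or mildly singular pieces only change an object up to a perfect complex. Alternatively, and perhaps more cleanly, one can cite the equivalence $MF(X(n),W)\xrightarrow{\sim}D_{sg}(X_0)$ of \eqref{MF_D_sg} together with the toric description of $X(n)$ to reduce generation to an explicit finite check. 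I expect the main obstacle to be precisely this last reduction: verifying that the strata sheaves (equivalently, the $E_i$) split-generate, and then upgrading split-generation to \emph{strict} generation — the latter is where Appendix B (idempotent completeness of $D_{sg}(X_0)$) is needed, so that no phantom direct summands can escape the triangulated span of the $E_i$. Combined with the peeling argument above, this yields that $\mathcal T = D_{sg}(X_0)$, completing the proof.
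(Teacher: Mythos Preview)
Your proposal has a genuine gap at the crucial step, and the logic of your final reduction is incorrect.

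\textbf{The main gap.} Your step 1 (peeling off components via short exact sequences) only shows that the structure sheaves of unions of components and of the intersection curves $\varGamma_{ij}$ lie in $\mathcal{T}$. Your step 2, where you claim these strata sheaves generate all of $D_{sg}(X_0)$, is asserted but not proved: the sentence about ``after finitely many syzygies, a filtration whose subquotients are twists of structure sheaves of strata'' is not a theorem you can cite, and the claim that ``line-bundle twists \dots\ only change an object up to a perfect complex'' is false in general (tensoring by a line bundle is an autoequivalence of $D_{sg}$, but it need not preserve $\mathcal{T}$). So as written you have not shown that an arbitrary object of $D_{sg}(X_0)$ is reached.

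\textbf{The logical error.} Your step 3 is wrong as stated: idempotent completeness of the \emph{ambient} category $D_{sg}(X_0)$ does \emph{not} upgrade split-generation by a subcategory $\mathcal{T}$ to strict generation. Idempotent completeness only says that idempotents in $D_{sg}(X_0)$ split; it says nothing about whether the resulting summands lie in $\mathcal{T}$. (The correct tool here would be Thomason's classification of dense subcategories via $K_0$, which is what Appendix~B actually uses, but then you would need an additional argument that the classes $[E_i]$ generate $K_0(D_{sg}(X_0))$ --- and you would still need split-generation first, which you have not established.) Also, a minor point: for $n>3$ the dual intersection graph of the $H_i$ is not a cycle; non-adjacent components can meet in a $\PP^1$.

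\textbf{How the paper proceeds.} The paper avoids all of this by proving the much stronger statement that $\cO_{H_1},\dots,\cO_{H_n}$ strictly generate $D^b(\coh(X_0))$ itself (which then trivially passes to the quotient $D_{sg}$). The key geometric input is a well-chosen ordering $s_1,\dots,s_n$ of the components (namely $n,1,n-1,2,\dots$) with the property that $H_{s_k}\setminus\bigl(\bigcup_{j<k}\varGamma_{s_j s_k}\bigr)$ is isomorphic to $\bbA^2$ (or an open subset thereof). Since $\cO_{\bbA^2}$ strictly generates $D^b(\coh(\bbA^2))$, one concludes by induction that $D^b_{H_{s_k}}(\coh(X_0))\subset\mathcal{T}$ for each $k$: at each stage the complexes supported on the already-handled intersection curves are in $\mathcal{T}$ by induction, and $\cO_{H_{s_k}}$ handles the rest. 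This gives strict generation directly, with no appeal to split-generation, idempotent completeness, or $K$-theory.
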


\begin{proof}
Clearly, it suffices to show that the sheaves $\cO_{H_1},\dots,\cO_{H_n}$ generate
the category $D^b(\coh(X_0)).$ Denote by $\cT\subset D^b(\coh(X_0))$ the full triangulated subcategory generated by these objects.
As above denote by $\varGamma_{st}$ the intersection $H_s\cap H_t.$

Since the divisors $H_s$ are precisely the irreducible components of $X_0,$ it suffices to prove that
that $D^b_{H_s}(\coh(X_0))\subset \cT$ for all $1\leq s\leq n,$ where $D^b_{H_s}(\coh(X_0))$ is the full subcategory consisting of complexes
with cohomology supported on $H_s.$
We introduce a new ordering on the set of components $H_s$ by setting $s_1=n,$ $s_2=1,$
$s_3=n-1,$ $s_4=2,$ \dots, $s_n=\lfloor\frac{n+1}2\rfloor,$
and will prove by induction on $1\leq i\leq n$ that
\begin{equation}
\label{generated_D^b} D^b_{H_{s_i}}( \coh(X_0))\subset \cT.
\end{equation}

For $i=1$ we have $H_{s_1}=H_n\cong\mathbb{A}^2.$ Therefore, the sheaf $\cO_{H_n}$ generates
$D^b(\coh(H_{s_1}))$ and, hence, it generates $D^b_{H_{s_1}}(\coh (X_0)).$
Thus, the subcategory $D^b_{H_{s_1}}(\coh (X_0))$ is contained in $\cT.$
If $n=3,$ then $H_1\cong H_2\cong\mathbb{A}^2,$ and we are done.

Assume that $n>3,$ and
suppose that \eqref{generated_D^b} is proved for $1\leq i <k.$
By induction hypothesis,
$D^b_{\varGamma_{s_j s_k}}(\coh (X_0))\subset\cT$ for any $j<k.$
The complement $H_{s_k}\setminus (\bigcup_{j<k}\varGamma_{s_j s_k})$ is isomorphic to either $\bbA^2$ (if $k<n-1$)
or an open subset in $\bbA^2$ (if $k=n-1$ or $n$). In any case we obtain that the sheaf $\cO_{H_{s_k}}$
together with the subcategories $D^b_{\varGamma_{s_j s_k}}(\coh (X_0))$ for $j<k$ generate $D^b_{H_{s_k}}(\coh (X_0)).$
In particular, $D^b_{H_{s_k}}(\coh (X_0))\subset\cT.$
This proves \eqref{generated_D^b}
for $i=k,$ which implies that
$\cT=D^b(\coh(X_0)).$
\end{proof}

\section{HMS for cyclic covers}\label{s:covers}

Let $d_1,$ $d_2,$ and $d_3$ be a triple of integers whose sum is a strictly positive integer $D.$  To this data, we shall associate a trivialization of the tangent space of a $D$-fold cyclic cover $C$ of $S^{2} - \{3 \textrm{ points} \},$ as well as a Landau-Ginzburg model on an orbifold quotient of $\C^{3}.$  In order to prove that these are mirror, we shall introduce a purely algebraic model for a category equivalent to a full generating subcategory  of the Fukaya category on one side and of the category of matrix factorizations on the other, then extend Theorem  \ref{th:main} to the cover.

\subsection{A rational grading on $A$}\label{ss:Atilde}
The algebraic model corresponds to a choice of a positive integer $D,$
and of integers $(p_1, p_2, p_3)$ and $(q_1, q_2, q_3)$ such that
$$p_1 + p_2 + p_3 = q_1 + q_2 + q_3 = D \quad \text{and}\quad
  p_i \equiv q_j \equiv D \mod 2.$$
As in  Lemma \ref{l:gradedCW}, we introduce the integers
$d_i = \frac{p_i+q_i}{2}.$
We also introduce the rational numbers
$\tilde{p}_i=p_i/D,$ $\tilde{q}_i=q_i/D,$ and $\tilde{d}_i=d_i/D.$
We then define a \emph{$\frac{1}{D}\Z$-graded} category
$A_{(\tilde{p},\tilde{q})}$ (the notation is analogous to that in Definition \ref{categ}) by setting
\begin{align}
\deg(u_{i-1,i}) & = \tilde{p}_i,\\
 \deg(v_{i,i-1}) & = \tilde{q}_i.
\end{align}
Note that additivity with respect to the multiplicative structure determines the rest of the gradings
\begin{align}
  \deg(x_{i}^{k}) = \deg(y_{i-1}^{k}) & =  2\tilde{d}_ik, \\
  \deg(x_{i}^{k}u_{i-1,i} ) = \deg(u_{i-1,i}  y_{i-1}^{k}) & = \tilde{p}_i+2\tilde{d}_i k,\\
  \deg(y_{i-1}^{k} v_{i,i-1} ) = \deg(v_{i,i-1} x_{i}^{k}) & = \tilde{q}_i+2\tilde{d}_i k.
\end{align}

We will now construct from the
$\frac1D\Z$-graded category $A_{(\tilde{p},\tilde{q})}$ a
$\Z$-graded category $\tilde{A}_{(\tilde{p},\tilde{q})}$,
and discuss $A_\infty$-structures on it.
The process we describe is in fact a specific instance of a
more general construction (see Definition \ref{def:Btilde}).

The first step is to consider an enlargement
$\tilde{A}_{(\tilde{p},\tilde{q})}^{[D]} $  of $A_{(\tilde{p},\tilde{q})}$ in which each object is replaced by $D$ different copies, and the groups of morphisms are shifted by multiples of $\frac{1}{D}.$
(On the symplectic side, the different objects correspond to the components of the inverse image of a curve under a $D$-fold covering map.)
\begin{align}
  \Ob \left( \tilde{A}_{(\tilde{p},\tilde{q})}^{[D]}    \right)  & = \{\tilde{X}_{i}^{k}\,|\, 0 \leq k < D  \} \\
  \tilde{A}_{(\tilde{p},\tilde{q})}^{[D]}(\tilde{X}_{i}^{k}, \tilde{X}_{j}^{\ell}) & =  A_{(\tilde{p},\tilde{q})}
  (   X_{i},  X_{j})\left[\frac{2(\ell - k)}{D}\right].  \label{eq:rational_shift_gradings}
\end{align}
Writing $A_{(1,1)}$ for the $\Z/2$-grading on $A$ in which the generators $u_{i-1,i}$ and $v_{i,i-1}$ both have odd degree, we have a forgetful functor
\begin{equation*}
  \tilde{A}_{(\tilde{p},\tilde{q})}^{[D]} \to A_{(1,1)}
\end{equation*}
 which takes $ \tilde{X}_{i}^{k} $  to $X_{i}.$  This functor is of course not graded, but there is a maximal subcategory of the source with the property that the restriction becomes a $\Z/2$-graded functor:
\begin{defi}\label{def:Atilde}
The category $\tilde{A}_{(\tilde{p},\tilde{q})} $ has objects those of
$\tilde{A}_{(\tilde{p},\tilde{q})}^{[D]} $ and morphisms the subgroup
\begin{equation}
  \tilde{A}_{(\tilde{p},\tilde{q})}(\tilde{X}_{i}^{k},   \tilde{X}_{j}^{\ell} ) \subset A_{(\tilde{p},\tilde{q})}  (   X_{i},  X_{j})\left[\frac{2(\ell - k)}{D}\right]
\end{equation}
generated by morphisms whose degree is integral, and moreover agrees in parity with the degree of the image in $ A_{(1,1)} .$
\end{defi}

We shall also need to understand $A_{\infty}$-structures on
$\tilde{A}_{(\tilde{p},\tilde{q})}.$ For this,
it will be convenient to make the following definition.

\begin{defi} A $\frac1D\Z$-graded $A_{\infty}$-category $B$ consists of a
$\Z/2$-graded $A_{\infty}$-category $B,$ together with
$\frac1D\Z$-gradings on\/ $\Hom^{even}(X,Y)$ and\/ $\Hom^{odd}(X,Y)$ for any pair of objects $X,Y\in Ob(B),$
with respect to which the higher products $m_n$ have degree\/ $2-n.$

A $\frac1D\Z$-graded DG category is a $\frac1D\Z$-graded $A_{\infty}$-category with $m_n=0$ for $n\geq 3,$ and with identity of
degree zero; finally,
a $\frac1D\Z$-graded category is a $\frac1D\Z$-graded DG category with zero differential.\end{defi}

We treat both $A_{(\tilde{p},\tilde{q})}$ and $\tilde{A}_{(\tilde{p},\tilde{q})}^{[D] } $
as $\frac1D\Z$-graded categories, with $u_{i-1,i},$ $v_{i,i-1}$ being odd morphisms. Note that for a $\frac1D\Z$-graded $A_{\infty}$-category $B$ over a field,
the standard construction gives a minimal $A_{\infty}$-structure on the cohomology,
i.e. on the $\frac1D\Z$-graded category $H^{*}(B).$

The $A_\infty$-structures of interest to us arise from the fact that any
$\frac1D\Z$-graded $A_{\infty}$-structure on
$A_{ (\tilde{p},\tilde{q})}$ extends to $ \tilde{A}_{(\tilde{p},\tilde{q})}^{[D] },$
in such a way that $\tilde{A}_{(\tilde{p},\tilde{q})}  $ is an $A_{\infty}$ subcategory.
The following result classifies $\frac1D\Z$-graded $A_{\infty}$-structures on
$A_{ (\tilde{p},\tilde{q})},$  by extending Proposition \ref{classification}:

\begin{prop}\label{classif_rational}
Equation \eqref{a,b} gives a bijection between the set of $\frac1D\Z$-graded
$A_{\infty}$-structures on $A_{ (\tilde{p},\tilde{q})},$ up to
$\frac1D\Z$-graded strict homotopy,  and  $\kk^{2}.$
\end{prop}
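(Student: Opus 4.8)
The plan is to extend the argument of Sections \ref{s:Ainfinity}--\ref{s:classif} to the $\frac1D\Z$-graded setting and to deduce the proposition from a $\frac1D\Z$-graded version of Proposition \ref{MC_solutions} applied to $\cA=A_{(\tilde p,\tilde q)}$. First I would observe that all of the obstruction theory of Section \ref{s:Ainfinity} carries over verbatim: although the morphism spaces of $A_{(\tilde p,\tilde q)}$ carry a genuinely $\frac1D\Z$-valued internal grading, every component $m_k$ of a $\frac1D\Z$-graded $A_\infty$-structure (with $m_1=0$ and $m_2$ the given composition) has internal degree $2-k\in\Z$, and every component $f_k$ of a $\frac1D\Z$-graded strict homotopy has internal degree $1-k\in\Z$; the same is true of the quadratic and polynomial expressions $\Phi_k,\Psi_k$. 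Hence the entire Maurer--Cartan problem -- and with it Lemmas \ref{obstr_m_n} and \ref{perturb_m_n} and Proposition \ref{MC_solutions} -- takes place inside the subcomplex $\bigoplus_{j\in\Z}CC^{\bdot}(A_{(\tilde p,\tilde q)})^{j}$ of the full $\frac1D\Z$-internally graded Hochschild complex, with $HH^{d}(\cA)^{j}$ now understood as the $\frac1D\Z$-graded Hochschild cohomology in (integer) internal degree $j$.

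Next I would compute the relevant Hochschild groups. Since $HH^{\bdot}(A)$ as a $\Z/2$-graded vector space is independent of the choice of internal grading refining it, it is enough to read off the internal degrees of the cocycle representatives built in the proof of Lemma \ref{Hoch_comp}: the classes obtained by composing $m$ times around the $n$-cycle of the $u$'s (arity $mn$, value a multiple of $\id_{X_i}$), those of arity $mn+1$, and their $v$-counterparts. Because $n=3$ and $p_1+p_2+p_3=q_1+q_2+q_3=D$, we have $\tilde p_1+\tilde p_2+\tilde p_3=\tilde q_1+\tilde q_2+\tilde q_3=1$, so these cochains are $\frac1D\Z$-homogeneous of internal degree $-m\in\Z$; thus, exactly as in Lemma \ref{Hoch_comp}, $HH^{2m}(A_{(\tilde p,\tilde q)})^{-m}\cong HH^{2m+1}(A_{(\tilde p,\tilde q)})^{-m}\cong k^{2}$ for $m\ge 1$, while $HH^{d}(A_{(\tilde p,\tilde q)})^{j}=0$ in all other cases with $d-j\ge 2$. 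In particular $HH^{2}(A_{(\tilde p,\tilde q)})^{j}=0$ for $j\le -2$ and $HH^{3}(A_{(\tilde p,\tilde q)})^{j}=0$ for $j\le -2$, so the hypotheses of Proposition \ref{MC_solutions} hold with $l=1$.

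With these two points in hand, the $\frac1D\Z$-graded form of Proposition \ref{MC_solutions} (with $l=1$) shows that every $\frac1D\Z$-graded $A_\infty$-structure on $A_{(\tilde p,\tilde q)}$ is $\frac1D\Z$-graded strictly homotopic to a structure $m^{\phi}$ with no lower nontrivial products (since $l+1=2$) and $[m^{\phi}_3]=\phi\in HH^{2}(A_{(\tilde p,\tilde q)})^{-1}\cong k^{2}$. As in the proof of Lemma \ref{Hoch_comp}, a basis of $HH^{2}(A_{(\tilde p,\tilde q)})^{-1}$ is given by the classes $\phi^{a,b}$ with $\phi^{a,b}(u_{i-1,i},u_{i-2,i-1},u_{i,i+1})=a\cdot\id_{X_i}$ and $\phi^{a,b}(v_{i+1,i},v_{i+2,i+1},v_{i,i-1})=b\cdot\id_{X_i}$, so the coordinates of $\phi$ are exactly the numbers $a,b$ read off from $m_3$ by the formula \eqref{a,b} (specialized to $n=3$). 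This makes $(a,b)\mapsto m^{a,b}$ a well-defined surjection onto the set of $\frac1D\Z$-graded $A_\infty$-structures modulo $\frac1D\Z$-graded strict homotopy; injectivity follows, just as in the proof of part (2) of Proposition \ref{classification}, from the fact that the coefficients \eqref{a,b} are invariant under $\frac1D\Z$-graded strict homotopy.

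The hard part will be the bookkeeping in the first step: one must check carefully that refining the internal grading from $\Z$ to $\frac1D\Z$ introduces neither new $A_\infty$-structures nor new strict homotopies, i.e.\ that all relevant cochains, obstruction classes and homotopies really sit in integer internal degree, so that nothing is lost in passing to the integer-internal-degree subcomplex of the Hochschild complex. This is precisely where the hypothesis $p_1+p_2+p_3=q_1+q_2+q_3=D$ (equivalently $\sum\tilde p_i=\sum\tilde q_i=1$) is used, and it is what makes the spectral sequence computation of Lemma \ref{Hoch_comp} transport without modification.
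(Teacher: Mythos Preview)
Your proposal is correct and follows essentially the same approach as the paper, which simply states that Hochschild cohomology and the obstruction theory of Sections \ref{s:Ainfinity}--\ref{s:classif} carry over verbatim to the $\frac1D\Z$-graded setting. You give considerably more detail than the paper does---in particular your explicit observation that, because $m_k$ and $f_k$ have integer internal degree, the entire problem lives in the integer-degree summand of the $\frac1D\Z$-graded Hochschild complex, and your verification that $\sum\tilde p_i=\sum\tilde q_i=1$ makes the spectral-sequence computation of Lemma \ref{Hoch_comp} go through unchanged (with $n=3$, $l=1$)---but this is elaboration of the same argument rather than a different route.
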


\begin{proof}The proof is the same as for Proposition \ref{classification} (2). Namely, Hochschild cohomology can be
defined for $\frac1D\Z$-graded categories in exactly the same manner as in the $\Z$-graded case, and all the relevant
computations from Sections \ref{s:Ainfinity} and \ref{s:classif} still hold in this setting.\end{proof}

\begin{cor}
The  $A_{\infty}$ structure on $\tilde{A}_{(\tilde{p},\tilde{q})}  $
induced by a $\frac1D\Z$-graded $A_{\infty}$-structure on
$A_{(\tilde{p},\tilde{q})}$  depends, up to strict $\Z$-graded homotopy,
only on the constants $a$ and $b$ appearing in Equation \eqref{a,b}.
\end{cor}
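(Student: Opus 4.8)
The plan is to deduce this corollary from Proposition \ref{classif_rational} together with the explicit construction of $\tilde{A}_{(\tilde{p},\tilde{q})}$ as an $A_\infty$ subcategory of the extension $\tilde{A}_{(\tilde{p},\tilde{q})}^{[D]}$. First I would observe that, by Proposition \ref{classif_rational}, any $\frac1D\Z$-graded $A_\infty$-structure on $A_{(\tilde{p},\tilde{q})}$ is $\frac1D\Z$-graded strictly homotopic to the model structure $m^{a,b}$ determined by the constants $(a,b)$ of Equation \eqref{a,b}. So it suffices to show that a $\frac1D\Z$-graded strict homotopy between two $\frac1D\Z$-graded $A_\infty$-structures on $A_{(\tilde{p},\tilde{q})}$ induces a $\Z$-graded strict homotopy between the corresponding induced structures on $\tilde{A}_{(\tilde{p},\tilde{q})}$.

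The key step is to make precise and verify the "extension" claim that precedes the statement of Proposition \ref{classif_rational}: a $\frac1D\Z$-graded $A_\infty$-structure $m$ on $A_{(\tilde{p},\tilde{q})}$ extends canonically to $\tilde{A}_{(\tilde{p},\tilde{q})}^{[D]}$, and restricts to $\tilde{A}_{(\tilde{p},\tilde{q})}$. Concretely, for objects $\tilde X_{i_0}^{k_0}, \dots, \tilde X_{i_r}^{k_r}$ the morphism spaces in $\tilde{A}_{(\tilde{p},\tilde{q})}^{[D]}$ are, by \eqref{eq:rational_shift_gradings}, just shifts of the morphism spaces $A_{(\tilde{p},\tilde{q})}(X_{i_{s-1}}, X_{i_s})$, and the shifts are compatible along a composable chain (the total shift $\frac{2(k_r-k_0)}{D}$ telescopes), so one simply sets $m_n$ on $\tilde{A}_{(\tilde{p},\tilde{q})}^{[D]}$ to be $m_n$ on $A_{(\tilde{p},\tilde{q})}$ under these identifications. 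The $A_\infty$-relations \eqref{ainf} then hold because they hold downstairs; the degree $2-n$ condition holds because the shifts telescope; and Definition \ref{def:Atilde} cuts out precisely the integrally-graded part with the correct parity, which is an $A_\infty$ subcategory since $m_n$ has degree $2-n$ (an integer) and the $\Z/2$-parity is preserved by the forgetful functor to $A_{(1,1)}$. The same telescoping argument applies verbatim to a strict homotopy $f$: a $\frac1D\Z$-graded strict homotopy $f$ on $A_{(\tilde{p},\tilde{q})}$ (with $f_1 = \id$, acting identically on objects, $f_n$ of degree $1-n$) extends to $\tilde{A}_{(\tilde{p},\tilde{q})}^{[D]}$ and restricts to $\tilde{A}_{(\tilde{p},\tilde{q})}$, giving a $\Z$-graded strict homotopy there.

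Putting these together: given a $\frac1D\Z$-graded $A_\infty$-structure on $A_{(\tilde{p},\tilde{q})}$ with invariants $(a,b)$, Proposition \ref{classif_rational} produces a $\frac1D\Z$-graded strict homotopy to $m^{a,b}$; applying the extension-and-restriction construction of the previous paragraph to this homotopy yields a $\Z$-graded strict homotopy between the induced structure on $\tilde{A}_{(\tilde{p},\tilde{q})}$ and the one induced by $m^{a,b}$. Hence the induced structure on $\tilde{A}_{(\tilde{p},\tilde{q})}$ depends only on $(a,b)$ up to $\Z$-graded strict homotopy, which is the assertion.

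The main obstacle I anticipate is purely bookkeeping rather than conceptual: one must check carefully that the grading shifts in \eqref{eq:rational_shift_gradings} are genuinely consistent along composable strings of morphisms (so that $m_n$ and the homotopy maps $f_n$ land in the right graded pieces), and that the Koszul signs in \eqref{ainf} and \eqref{ainff} are unaffected by passing from $A_{(\tilde{p},\tilde{q})}$ to its copies — both of which follow because the underlying $\Z/2$-grading is literally unchanged by the shift, since $\frac{2(\ell-k)}{D}$ contributes an even amount. I would also want to note explicitly that the construction here is the special case of the general Definition \ref{def:Btilde} referenced in the text, so no independent verification of functoriality is needed beyond what that general construction provides.
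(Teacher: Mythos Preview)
Your proposal is correct and follows essentially the same approach as the paper's proof: reduce via Proposition \ref{classif_rational} to showing that a $\frac1D\Z$-graded strict homotopy on $A_{(\tilde{p},\tilde{q})}$ extends to $\tilde{A}_{(\tilde{p},\tilde{q})}^{[D]}$ and restricts to the integrally graded subcategory $\tilde{A}_{(\tilde{p},\tilde{q})}$. The paper's proof is a terse two sentences stating exactly this; your version simply unpacks the telescoping of shifts and the parity bookkeeping that the paper leaves implicit.
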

\proof
A strict homotopy between two $A_{\infty}$ structures on
$A_{ (\tilde{p},\tilde{q})}$ extends to one between the structures on
$\tilde{A}_{(\tilde{p},\tilde{q})}^{[D]}.$  Moreover, if the homotopy is
graded, the functor will preserve integral gradings, and hence induce a
functor on the integral subcategories.
\endproof

The next result will allow us some flexibility in proving homological mirror symmetry by choosing an appropriate graded representative of each object.  The key observation needed for its proof is that if we allow arbitrary integers in Equation \eqref{eq:rational_shift_gradings}, then replacing $k$ by $k+D$ corresponds to a homological shift by $2,$ so that integrality is preserved as well as parity:
\begin{lemma} \label{lem:closure_shift_iso}
The closure of $\tilde{A}_{(\tilde{p},\tilde{q})}  $ under the shift functor depends, up to isomorphism, only the triple $(d_1,d_2,d_3).$
\end{lemma}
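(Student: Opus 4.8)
The plan is to reduce the statement to a purely combinatorial re‑indexing. Fix two admissible choices $(p_i,q_i)$ and $(p_i',q_i')$ of integers as in Section \ref{ss:Atilde} defining the \emph{same} triple $(d_1,d_2,d_3)$, i.e.\ $p_i+q_i=p_i'+q_i'=2d_i$ for all $i$; the goal is to produce an equivalence between the closures of $\tilde A_{(\tilde p,\tilde q)}$ and of $\tilde A_{(\tilde p',\tilde q')}$ under the shift functor. The first step is to repackage these closures. As noted just before the statement, if one lets the upper index $k$ in \eqref{eq:rational_shift_gradings} range over all of $\Z$, then replacing $k$ by $k+D$ is a homological shift by $2$, and in particular preserves the integrality and parity clauses of Definition \ref{def:Atilde}. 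So I would introduce the $\Z$-graded category $\tilde A^{\Z}_{(\tilde p,\tilde q)}$ with objects $\{\tilde X_i^k\mid i\in\Z/3,\ k\in\Z\}$ and morphism groups the integral, parity‑compatible subgroups of $A_{(\tilde p,\tilde q)}(X_i,X_j)[2(\ell-k)/D]$; inside it $\tilde X_i^{k+D}\cong\tilde X_i^k[2]$, so every object is a shift of one of the $\tilde X_i^k$ with $0\le k<D$, and hence the shift‑closure of $\tilde A^{\Z}_{(\tilde p,\tilde q)}$ agrees with that of $\tilde A_{(\tilde p,\tilde q)}$. It therefore suffices to build a grading‑preserving isomorphism of categories $\Phi\colon\tilde A^{\Z}_{(\tilde p',\tilde q')}\to\tilde A^{\Z}_{(\tilde p,\tilde q)}$.

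For the construction I would write $p_i'=p_i+2e_i$ with $e_i\in\Z$ (possible since $p_i$ and $p_i'$ have the same parity), so that also $q_i'=q_i-2e_i$, and observe that $\sum_i e_i=\tfrac12\bigl(\sum_i p_i'-\sum_i p_i\bigr)=0$. This equality is exactly what makes the cyclic system $a_i-a_{i-1}=-e_i$ ($i\in\Z/3$) solvable over $\Z$; fix such a solution $(a_i)$. Define $\Phi$ on objects by $\tilde X_i^k\mapsto\tilde X_i^{k+a_i}$ and on morphisms as the identity on the underlying elements of the $k$-linear category $A$. To check $\Phi$ is well defined and grading‑preserving it is enough to treat the generators: the degree of $u_{i-1,i}\colon\tilde X_{i-1}^k\to\tilde X_i^\ell$ in $\tilde A^{\Z}_{(\tilde p',\tilde q')}$ equals its degree as $u_{i-1,i}\colon\tilde X_{i-1}^{k+a_{i-1}}\to\tilde X_i^{\ell+a_i}$ in $\tilde A^{\Z}_{(\tilde p,\tilde q)}$, since the extra contribution $\tfrac2D(a_{i-1}-a_i)$ from the index shift is precisely $\tfrac2D e_i=\tilde p_i'-\tilde p_i$; the computation for $v_{i,i-1}$ is analogous, the sign change in $q_i'=q_i-2e_i$ being compensated by the opposite sign of $a_i-a_{i-1}$. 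Because $d_i$, hence $\tilde d_i$, is unchanged, the degrees of $x_i$, $y_i$ and of all remaining morphisms are automatically matched, so $\Phi$ (with inverse the functor attached to the solution $-a_i$ of the transposed system) is an isomorphism of $\Z$-graded categories carrying integral, parity‑compatible morphisms to integral, parity‑compatible morphisms of the same degree.

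Finally, since $\Phi$ preserves gradings it commutes with the shift functor, hence extends to an isomorphism of the shift‑closures; composed with the two identifications of the previous step this gives the asserted equivalence, and being a strict isomorphism of graded categories it also transports the $A_\infty$-structures of the kind discussed just above. I expect the only delicate point to be the bookkeeping in the first step — verifying that $\tilde X_i^{k+D}\cong\tilde X_i^k[2]$ genuinely respects both the integrality and the $\Z/2$-parity conditions of Definition \ref{def:Atilde}, so that passing to the $\Z$-indexed category really computes the shift‑closure — together with confirming the consistency of the cyclic system for the $a_i$, which is exactly where the hypothesis that the two parameter sets have the same sums (equivalently, the same triple $(d_1,d_2,d_3)$) enters.
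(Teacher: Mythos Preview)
Your proposal is correct and follows essentially the same approach as the paper: both construct the isomorphism by re-indexing objects via $\tilde X_i^k \mapsto \tilde X_i^{k+a_i}$ for suitable integer offsets $a_i$ depending only on the differences $p_i-p_i'$, and then observe that this carries integral, parity-compatible morphisms to integral, parity-compatible morphisms of the same degree. Your write-up is somewhat more explicit than the paper's about the passage to the $\Z$-indexed category (so that the shifts $a_i$ need not lie in $[0,D)$) and about the consistency of the cyclic system for the $a_i$, which is precisely where $\sum p_i = \sum p_i' = D$ is used.
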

\proof
Let $(p_1', p_2', p_3')$ and $(q_1', q_2', q_3')$ be triples of integers such that
\begin{equation*}
  p_i' + q_i' = p_i + q_i.
\end{equation*}
The assignment
\begin{align*}
 \tilde{X}_{1}^{k} &  \mapsto \tilde{X}_{1}^{k}\\
\tilde{X}_{2}^{k} &  \mapsto \tilde{X}_{2}^{{k+ p_2 - p_2'}} \\
\tilde{X}_{3}^{k} &  \mapsto \tilde{X}_{3}^{{k+ p_2 - p_2'+p_3- p_3'}}
\end{align*}
defines a  $\frac1D\Z$-graded isomorphism, and hence an isomorphism of the corresponding subcategories of integrally graded morphisms.
\endproof

\subsection{The wrapped Fukaya category of a cyclic cover}
As in the previous section, we choose integers $(d_1,d_2,d_3)$ whose sum is a strictly positive integer $D.$  Projecting the Riemann surface
\begin{equation}
  C = \{ (x,y) | y^{D} =  x^{d_2}(1-x)^{d_3} \} \subset \C \times \C^{*}
\end{equation}
to the $x$-plane defines a cover of $\C - \{0,1\},$ in which the punctures are ordered $(\infty,0,1).$

\begin{prop} \label{prop:wrapped_cover}
The wrapped Fukaya category of  $C,$ with the $\Z$-grading determined by
the restriction of the holomorphic $1$-form
$\frac{dx}{y},$  is strictly generated by the components of  the inverse image of the real axis.  Whenever $p_i + q_i = 2d_i,$ there is a choice of grading on these components so that the resulting subcategory of the Fukaya category is $A_{\infty}$-equivalent to the structure induced by
$m^{1,1}$ on $  \tilde{A}_{(\tilde{p},\tilde{q})} .$
\end{prop}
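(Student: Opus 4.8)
The plan is to mimic the proof of Theorem~\ref{th:fukaya} on the $D$-fold cyclic cover, using that an unramified cover is a local symplectomorphism so that Floer-theoretic data on the base pull back; this is the unramified-cover device of Seidel~\cite{Segenus2} (also used by Sheridan~\cite{sheridan}). Realize $C$ as the $D$-fold cyclic covering $\pi\colon C\to\C\setminus\{0,1\}=\PP^1\setminus\{3\text{ points}\}$, $(x,y)\mapsto x$, with deck group $\Z/D$ acting by $(x,y)\mapsto(x,e^{2\pi i/D}y)$. Pull back via $\pi$ the Liouville form $\lambda$ and the quadratic-at-infinity Hamiltonian $H$ on $\PP^1\setminus\{3\text{ points}\}$ fixed in Section~\ref{s:wrapped}; then $\pi^{*}\lambda$ is a Liouville form on $C$ with cylindrical ends over the punctures, and the three Liouville-invariant arcs $L_1,L_2,L_3$ — the components of the real axis, running from $\infty$ to $0$, from $0$ to $1$, and from $1$ to $\infty$ — lift to $3D$ Liouville-invariant exact Lagrangians $L_i^{k}$ ($i\in\Z/3$, $0\le k<D$), which are our objects. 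Since $x|_{C}$ is an \emph{unramified} covering, $dx$ is nowhere zero on $C$, and $y$ is nowhere zero by construction, so $\frac{dx}{y}$ is a nowhere-vanishing holomorphic $1$-form, which defines the $\Z$-grading in the statement; note it is the $\pi$-pullback of a multivalued holomorphic $1$-form on $\PP^1\setminus\{3\text{ points}\}$ whose monodromy lies in the group of $D$-th roots of unity.

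I would first identify the morphism spaces. Because $\pi$ is a local symplectomorphism and the data upstairs are pulled back, path-lifting gives, for each $i,j$ and each sheet $k$ over $L_i$, a bijection between the time-$1$ Hamiltonian chords from $L_i^{k}$ to $\bigsqcup_{\ell}L_j^{\ell}$ and the time-$1$ Hamiltonian chords from $L_i$ to $L_j$; a chord starting on $L_i^{k}$ ends on a sheet $L_j^{\ell}$ determined by the chord and by $k$. On the other hand, running the Maslov-index computation of Lemma~\ref{l:gradedCW} on $C$ with the trivialization $\frac{dx}{y}$, each wrapping in the cylindrical end over the $i$-th puncture of $\PP^1\setminus\{3\text{ points}\}$ contributes $2\tilde d_i$ to the degree, so that — after a suitable choice of graded lifts of the $L_i^{k}$ — a generator of $CW^{*}(L_i^{k},L_j^{\ell})$ has degree equal to that of the corresponding element of $A_{(\tilde p,\tilde q)}(X_i,X_j)\big[\tfrac{2(\ell-k)}{D}\big]$, in particular an integer; the crucial point is that the sheet $\ell$ on which a given chord lands is precisely the one making this degree integral. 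Thus $CW^{*}(L_i^{k},L_j^{\ell})\cong\tilde{A}_{(\tilde p,\tilde q)}(\tilde X_i^{k},\tilde X_j^{\ell})$ as $\Z$-graded spaces, in the sense of Definition~\ref{def:Atilde}. The needed freedom in the graded lifts is exactly the observation preceding Lemma~\ref{lem:closure_shift_iso}, that shifting a graded lift by $2$ amounts to $k\mapsto k+D$; combined with Lemma~\ref{lem:closure_shift_iso} this lets us realize any admissible triple $(p,q)$ with $p_i+q_i=2d_i$.

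Next the $A_\infty$-structure. Every rigid perturbed holomorphic polygon in $C$ with boundary on (Liouville rescalings of) the $L_i^{k}$ projects to a rigid one in $\PP^1\setminus\{3\text{ points}\}$, and each rigid polygon downstairs lifts uniquely once a lift of one boundary arc is fixed; moreover transversality needs no further perturbation, exactly as downstairs, since upstairs polygons project to rigid ones. Hence the higher products $m_r$ on $\bigoplus CW^{*}(L_i^{k},L_j^{\ell})$ are computed by projecting to $\PP^1\setminus\{3\text{ points}\}$, where by Lemmas~\ref{l:HW} and~\ref{l:mn} they are governed by the structure $m^{1,1}$ on $A$ (with the sign normalization of Section~\ref{s:wrapped}), and then lifting via the sheet bookkeeping above. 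In other words the $A_\infty$-structure on $\tilde{A}_{(\tilde p,\tilde q)}$ obtained this way is exactly the one induced by the $\frac1D\Z$-graded structure $m^{1,1}$ on $A_{(\tilde p,\tilde q)}$ — which makes sense by Proposition~\ref{classif_rational} and the remark preceding it, since any $\frac1D\Z$-graded $A_\infty$-structure on $A_{(\tilde p,\tilde q)}$ extends canonically to $\tilde{A}_{(\tilde p,\tilde q)}^{[D]}$ with $\tilde{A}_{(\tilde p,\tilde q)}$ as an $A_\infty$-subcategory. This yields the asserted $A_\infty$-equivalence.

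It remains to prove strict generation, which I would do as in Lemma~\ref{l:generate}: realize $C$ as a simple branched covering of $\C$ (for instance by suitably perturbing the composite of $\pi$ with the covering of Figure~\ref{fig:cover}), so that Theorem~\ref{thm:thimbles_generate} shows its thimbles strictly generate $\W(C)$; these thimbles are isotopic to lifts of the arcs $\delta_1,\dots,\delta_4$ downstairs, which are isotopic to $L_1,L_2,L_2,L_1$, hence upstairs to a subcollection of the $L_i^{k}$, whence $\{L_i^{k}\}$ strictly generate $\W(C)$. I expect the main difficulty to lie in the second step: synchronizing the sheet-labelling of the cover (governed by the deck group $\Z/D$) with the integrality-and-parity conditions that cut $\tilde{A}_{(\tilde p,\tilde q)}$ out of $\tilde{A}_{(\tilde p,\tilde q)}^{[D]}$, i.e.\ checking that the covering monodromy and the $\frac{dx}{y}$-grading interlock exactly as in Definition~\ref{def:Atilde}; the covering-space arguments for the products are routine but require the usual care with transversality and orientation signs.
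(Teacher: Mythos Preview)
Your proposal is correct and follows the same approach as the paper: pull back the Liouville and Hamiltonian data through the unramified cover, identify chords via path-lifting, transport the $A_\infty$-structure by unique lifting of polygons, and for generation compose $\pi$ with the Lefschetz fibration of Lemma~\ref{l:generate} and apply Theorem~\ref{thm:thimbles_generate}. The paper is merely more explicit at the step you flag as delicate: it fixes concrete real-valued phases for the graded lifts $\tilde{L}_i^k$ (using that each component of $\pi^{-1}(L_i)$ has constant phase with respect to $\frac{dx}{y}$), verifies the identification with $\tilde{A}_{(\tilde p,\tilde q)}$ for one specific choice $(p_1,p_2,p_3)=(D-2d_2,\,2d_2-D,\,D)$, $(q_1,q_2,q_3)=(D-2d_3,\,D,\,2d_3-D)$, and then invokes Lemma~\ref{lem:closure_shift_iso} for arbitrary $(p,q)$ with $p_i+q_i=2d_i$.
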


\begin{remark}
{\rm
A description of the Fukaya categories of covers as a semi-direct product
has previously appeared in the proof of Homological mirror symmetry for
the closed genus $2$ curve (see \cite[Remark 8.1]{Segenus2}), and in
Sheridan's work \cite[Section 7]{sheridan}, but our implementation will
be quite different because we are concerned with recovering integral gradings that do not come from trivializations of the tangent space of $C$ which are pulled back from the base.  Of course, underlying either approach is the fact that each holomorphic disc in the base lifts uniquely, upon choosing a basepoint, to a holomorphic disc in the cover.
}
\end{remark}


In order to prove Proposition \ref{prop:wrapped_cover}, we choose our curves to be
\begin{align*}
L_1 & = (-\infty,0)  \\
L_2 & = (0,1) \\
L_{3}& = (1,+\infty).
\end{align*}
Note that each component of the inverse image of $L_i$ in $C$ has constant phase with respect to the $1$-form $\frac{dx}{y}.$ The different components are distinguished by their phases: those lying over $L_2$ have phases the $D$-th roots of unity, while the inverse images of $L_1$ and $L_3$ respectively have phases equal to the solutions of $y^{D}=(-1)^{d_2}$ and $y^{D}=(-1)^{d_3}.$  If we fix the exponential map
\begin{equation*}
  \alpha \mapsto e^{\pi \sqrt{-1} \alpha}
\end{equation*}
then the graded lifts of such components are again distinguished by the corresponding real-valued phase, which lies in $\frac{d_i}{D} + \frac{2}{D}\Z.$  For each integer $0 \leq k < D,$ we fix graded lifts $\tilde{L}_{i}^{k}$ of $L_i$ with real valued phases
\begin{equation*}
  \textrm{Phase}(\tilde{L}_{i}^{k}) = \begin{cases}
\frac{-d_2}{D} + \frac{2k}{D} & \textrm{ if } i=1 \\
  \frac{2k}{D}  & \textrm{ if } i=2 \\
\frac{d_3}{D} + \frac{2k}{D}  & \textrm{ if } i=3.
\end{cases}
\end{equation*}

If we use a Hamiltonian on $C$ which is pulled back from $\C - \{0,1\},$ a chord between $\tilde{L}_{i}^{k}$ and $\tilde{L}_{j}^{\ell}$ is uniquely determined by its projection to $\C,$ which is a chord with endpoints on $L_i$ and $L_j.$  Choosing the Hamiltonian as in Section \ref{s:wrapped}, the differential in the Floer complex vanishes, so that $HW^{*}( \tilde{L}_{i}^{k}, \tilde{L}_{j}^{\ell})$ is the subgroup of $ HW^{*}( L_i,L_j )$ generated by those chords admitting a lift with the correct boundary conditions.

By construction, we have arranged for the chords $v_{2,1}$ and $u_{2,3}$ to respectively lift to generators of $HW^{*}( \tilde{L}_{2}^{0}, \tilde{L}_{1}^{0}) $ and  $HW^{*}( \tilde{L}_{2}^{0}, \tilde{L}_{3}^{0}) .$  It is then not hard to see that the generators of $HW^{*}( \tilde{L}_{2}^{0}, \tilde{L}_{1}^{0}) $ correspond to lifts of chords $v_{2,1}x_{2}^{k}$ whenever $D$ divides $kd_2,$ while the generators of
$HW^{*}( \tilde{L}_{2}, \tilde{L}_{3}) $ are lifts of $ y_{2}^{k} u_{2,3}$ where $D$ divides $kd_3.$

Note that if we set $q_2=p_3=D,$ these are precisely the monomials in
$A_{(\tilde{p},\tilde{q})}(X_2,X_1) $ and $A_{(\tilde{p},\tilde{q})}(X_2,X_3)$
of odd integer degree, i.e.\ the generators of $\tilde{A}_{(\tilde{p},\tilde{q})}(\tilde{X}_2^{0},\tilde{X}_1^{0})$
and $\tilde{A}_{(\tilde{p},\tilde{q})}(\tilde{X}_2^{0},\tilde{X}_3^{0}).$  Extending this computation from $k=\ell=0$ to the general case, and using the fact that a holomorphic curve in $\C - \{0,1\}$ lifts uniquely to $C$ upon choosing a basepoint, we conclude:
\begin{lemma}
If $(p_1,p_2,p_3) = (D - 2d_2 , 2 d_2 -  D, D)$ and $(q_1, q_2, q_3) = (D - 2d_3  , D, 2d_3- D ),$ then the subcategory of $\W(C)$ with objects $\tilde{L}_{i}^{k}$ is quasi-isomorphic to $\tilde{A}_{(\tilde{p},\tilde{q})}$ equipped with the $A_{\infty}$ structure induced by $m^{1,1}.$  \qed
\end{lemma}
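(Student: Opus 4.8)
The plan is to prove the two halves of the statement separately — first that the full subcategory $\cW_{0}\subset\W(C)$ on the objects $\tilde L_{i}^{k}$ agrees with $\tilde A_{(\tilde p,\tilde q)}$ as a $\Z$-graded category, then that the $A_{\infty}$-structure it carries is the one induced by $m^{1,1}$ — both resting on the single geometric input that a holomorphic polygon in $\C-\{0,1\}$ with boundary on $L_{1},L_{2},L_{3}$ lifts uniquely to $C$ once a boundary marked point is fixed. I would first record that the hypothesis on $(p,q)$ enters only through $\tilde p_{i}=p_{i}/D$, $\tilde q_{i}=q_{i}/D$, the equalities $q_{2}=p_{3}=D$ (which make $v_{2,1}$ and $u_{2,3}$ have integral degree $1$), and $p_{i}+q_{i}=2d_{i}$.

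For the first half I would begin by using the deck group $\Z/D$, acting on $C$ by $(x,y)\mapsto(x,e^{2\pi\sqrt{-1}/D}y)$, which sends $\tilde L_{i}^{k}$ to $\tilde L_{i}^{k+1}$ and hence reduces the computation of $HW^{*}(\tilde L_{i}^{k},\tilde L_{j}^{\ell})$ to the case $k=0$. Next I would observe that, by the $n=3$ instance of Lemma \ref{l:gradedCW} refined to the $\frac1D\Z$-grading coming from $dx/y$, the chords from $L_{i}$ to $L_{j}$ form a basis of $A_{(\tilde p,\tilde q)}(X_{i},X_{j})$, and that such a monomial lifts to a chord between $\tilde L_{i}^{0}$ and $\tilde L_{j}^{\ell}$ exactly when the wrapping it records — read off from the monodromies of the covering around the three punctures ($e^{2\pi\sqrt{-1}d_{2}/D}$ around $0$, $e^{2\pi\sqrt{-1}d_{3}/D}$ around $1$, $e^{2\pi\sqrt{-1}d_{1}/D}$ around $\infty$) — carries the fixed phase of $\tilde L_{i}^{0}$ to that of $\tilde L_{j}^{\ell}$. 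The point to verify is that, after unwinding \eqref{eq:rational_shift_gradings}, this coincides with the condition that the monomial have integral degree of the correct parity, i.e. lie in $\tilde A_{(\tilde p,\tilde q)}(\tilde X_{i}^{0},\tilde X_{j}^{\ell})$. This is precisely what was checked before the statement for the pairs $(i,j)=(2,1)$ and $(2,3)$, where it isolates the monomials $v_{2,1}x_{2}^{k}$ with $D\mid kd_{2}$ and $y_{2}^{k}u_{2,3}$ with $D\mid kd_{3}$; I would dispatch the remaining index pairs and the self-morphisms in the same way. Since the Floer differential vanishes (as it does downstairs, by the parity remark after Lemma \ref{l:gradedCW}), this gives $\bigoplus HW^{*}(\tilde L_{i}^{k},\tilde L_{j}^{\ell})\cong\tilde A_{(\tilde p,\tilde q)}$ as $\Z$-graded vector spaces, and since $m_{2}$ counts rigid holomorphic triangles, each the unique lift of one in the base, it agrees with the composition of $A$ from Lemma \ref{l:HW} and hence with that of $\tilde A_{(\tilde p,\tilde q)}$.

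For the second half, the same unique-lifting principle shows that each higher product $m_{r}$ of $\cW_{0}$ is the count of lifted $(r+1)$-gons in $\C-\{0,1\}$, so that the $A_{\infty}$-structure on $\cW_{0}$ is the one induced, in the sense of the Corollary to Proposition \ref{classif_rational}, by the $\frac1D\Z$-graded $A_{\infty}$-structure $m$ on $A_{(\tilde p,\tilde q)}$ given by counting polygons in the base with boundary on $L_{1},L_{2},L_{3}$ graded by $dx/y$ — and these counts do form a $\frac1D\Z$-graded $A_{\infty}$-structure since the index formula forces $\deg m_{r}=2-r$. Then, invoking Proposition \ref{classif_rational}, I would only need the constants $a=m_{3}(u_{3,1},u_{2,3},u_{1,2})(0)$ and $b=m_{3}(v_{2,1},v_{3,2},v_{1,3})(0)$ of Equation \eqref{a,b}: as in Lemma \ref{l:mn} with $n=3$, the unique polygon contributing to each is the front (resp.\ back) half of the $\theta$-graph formed by $L_{1},L_{2},L_{3}$ on the sphere, giving $a=1$ and $b=(-1)^{3}=-1$. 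Both being nonzero, the $\frac1D\Z$-graded automorphism of $A_{(\tilde p,\tilde q)}$ that fixes the $u_{i-1,i}$ and sends $v_{i,i-1}\mapsto-v_{i,i-1}$, $x_{i}\mapsto-x_{i}$ intertwines $m$ with $m^{1,1}$, just as at the end of Section \ref{s:wrapped}; the Corollary to Proposition \ref{classif_rational} then identifies the $A_{\infty}$-structure on $\cW_{0}\cong\tilde A_{(\tilde p,\tilde q)}$ with the one induced by $m^{1,1}$, which combined with the first half gives the asserted quasi-isomorphism.

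The main obstacle I anticipate is the phase/monodromy bookkeeping in the first half: establishing, uniformly in $(i,j,k,\ell)$ and with the specific $(p,q)$ of the hypothesis, that a base chord admits a lift with boundary on $\tilde L_{i}^{k}$ and $\tilde L_{j}^{\ell}$ if and only if the corresponding monomial meets the integrality-and-parity condition of Definition \ref{def:Atilde} — the cases already treated are representative, but the general verification is fiddly. A lesser point is checking that the strict homotopy produced in the second half can be taken $\frac1D\Z$-graded, so that it descends to the integral subcategory $\tilde A_{(\tilde p,\tilde q)}$; this follows from the $\frac1D\Z$-graded Hochschild computations underlying Proposition \ref{classif_rational}.
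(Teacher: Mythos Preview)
Your proposal is correct and follows essentially the same approach as the paper. The lemma carries a \qed\ marker because its proof is the preceding discussion together with the unique-lifting principle, and you have simply fleshed out those two ingredients: extending the $(i,j)=(2,1),(2,3)$ phase/integrality match to all pairs via the deck action, and invoking Proposition~\ref{classif_rational} (and its Corollary) together with the $n=3$ case of Lemma~\ref{l:mn} to pin down the induced $A_\infty$-structure as $m^{1,1}$.
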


This result, together with Lemma \ref{lem:closure_shift_iso}, implies the second part of Proposition \ref{prop:wrapped_cover}, while the first part follows from Theorem \ref{thm:thimbles_generate} applied to the composition of the covering map from $C$ to $\C - \{0,1\}$ with the Lefschetz fibration used in Lemma \ref{l:generate}.

\subsection{Equivariant Landau-Ginzburg mirror model}
\label{ss:LGquotient}

Consider $\C^3$ equipped with the diagonal action of $G=\Z/D$ with weights $\frac1D(d_1,d_2,d_3),$ where $d_i=\frac{p_i+q_i}2$ as above. Let
$W:=z_1z_2z_3\in\C[z_1,z_2,z_3]^G.$ Our LG model is $(\C^3//G,W).$ We have an equivalence
\begin{equation}D_{sg}^G(W^{-1}(0))\cong MF^G(W).\end{equation}

For each $\chi\in G^*\cong\Z/D,$ we have a functor $-(\chi)$ on $D_{sg}(W^{-1}(0)).$ For each $0\leq k<D,$ denote by $\chi_k\in G^*$
the character corresponding to the image of $k$ in $\Z/D.$ Take the objects
$$E_i^k:=\cO_{H_i}(\chi_k)\in D_{sg}^G(W^{-1}(0)),\quad 1\leq i\leq 3,\quad 0\leq k<D,$$
where $H_i=\{z_i=0\}\subset W^{-1}(0).$ Clearly, they generate (strictly) the category $D_{sg}^G(W^{-1}(0)).$
Now we would like to prove that there is an equivalence $D\W(C)\cong D_{sg}^G(W^{-1}(0)),$ such that the objects $\tilde{L}_i^k$ correspond to $E_i^k.$
To do that, we will deal with $\frac1D\Z$-gradings on matrix factorizations.

Put $\deg(z_i):=2\tilde{d}_i=\frac{2d_i}D.$ Then the algebra $R=\C[z_1,z_2,z_3]$ becomes $\frac1D\Z$-graded, and $\deg(W)=2.$
Define a $\frac1D\Z$-graded DG category $MF^{\frac1D\Z}(W)$ of $\frac1D\Z$-graded matrix factorizations as follows.

An object of this category is a pair of free finitely generated $\frac1D\Z$-graded $R$-modules $\underline{T}=(T_1, T_0),$ together with homogeneous morphisms $t_1: T_1\to T_0,$ $t_0: T_0\to T_1$ of degree $1,$
such that $t_1 t_0=W\cdot\id_{T_0},$ $t_0 t_1=W\cdot\id_{T_1}.$

Further, for two objects $\underline{T}, \underline{S},$ the $2$-periodic complex of morphisms $\Hom(\underline{T},\underline{S})$ is defined as usual.
Composition is also the usual one. Finally, the $\frac1D\Z$-grading on $\Hom^{even}(\underline{T}, \underline{S})$ and $\Hom^{odd}(\underline{T},
\underline{S})$ comes from
the $\frac1D\Z$-gradings on $T_1,$ $T_0,$ $S_1,$ $S_0.$

It is straightforward to check that we get indeed a $\frac1D\Z$-graded DG category. Now we consider three particular matrix factorizations
${\underline{T}\,}_1, {\underline{T}\,}_2, {\underline{T}\,}_3\in MF^{\frac1D\Z}(W)$ as follows:
$$
{\underline{T}\,}_1=\{R\stackrel{z_2z_3}{\lto}R[1-2\tilde{d}_1]\stackrel{z_1}{\lto}R\},
$$
and analogously for ${\underline{T}\,}_2, {\underline{T}\,}_3.$ Denote by $\cC_{d_1,d_2,d_3}\subset MF^{\frac1D\Z}(W)$ the full $\frac1D\Z$-graded DG subcategory
with objects ${\underline{T}\,}_1, {\underline{T}\,}_2, {\underline{T}\,}_3.$ Then the $\frac1D\Z$-graded cohomological category $H^{*}(\cC_{d_1,d_2,d_3})$ is equipped with a natural minimal $A_{\infty}$-structure (defined up to graded strict homotopy).

For convenience, set ${\underline{T}\,}_{i+3}:={\underline{T}\,}_i,$ $z_{i+3}:=z_i,$ and $d_{i+3}:=d_i.$

\begin{prop}$(1)$ There is a natural equivalence of $\frac1D\Z$-graded categories $A_{(\tilde{p},\tilde{q})}\cong H^{*}(\cC_{d_1,d_2,d_3}),$
where $p_i=2d_i+2d_{i+1}-D$ and $q_i=2d_{i-2}+2d_{i-1}-D.$

$(2)$ Under the above equivalence, the $A_{\infty}$-structure on $H^{*}(\cC_{d_1,d_2,d_3})$ is homotopic to $m^{1,1}.$\end{prop}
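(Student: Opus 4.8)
The plan is to view this Proposition as the $\frac1D\Z$-graded refinement of the $n=3$ case of Theorem~\ref{th:B-side-mn}, obtained by replacing the integral gradings of Section~\ref{s:Dbsing} with the $\frac1D\Z$-grading determined by $\deg(z_i)=2\tilde d_i$ (so that $\deg W=2$). Under the equivalence \eqref{MF_D_sg} the matrix factorizations $\underline{T}_1,\underline{T}_2,\underline{T}_3$ correspond to the structure sheaves $E_i=\cO_{H_i}$ of the three coordinate hyperplanes in $W^{-1}(0)\subset\C^3$, so for part~$(1)$ I would first import verbatim the computation from the proof of Theorem~\ref{th:B-side-mn} specialized to $n=3$ (where each $H_i\cong\bbA^2$, each pairwise intersection $\varGamma_{ij}\cong\bbA^1$, and the triple intersection is a point): this already produces the isomorphism of $\Z/2$-graded algebras $\bigoplus_{i,j}\Hom(\underline{T}_i,\underline{T}_j)\cong\bigoplus_{i,j}A(X_i,X_j)$, the generators $u_{i-1,i},v_{i,i-1},x_i,y_i$, and the relations $u_{i-1,i}\circ v_{i,i-1}=x_i$, $v_{i+1,i}\circ u_{i,i+1}=y_i$.

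What remains for part~$(1)$ is to compute the $\frac1D\Z$-degrees of these generators. This is a linear bookkeeping using $\deg(z_i)=2\tilde d_i$ together with the internal shifts $R[1-2\tilde d_i]$ built into $\underline{T}_i$: each identity $\mathrm{id}_{\underline{T}_i}$ has degree $0$; the classes $x_i,y_i$ are restrictions of coordinate functions on the subscheme $D_i\subset H_i$, so their degrees are read off from the $\deg(z_j)$; and the degrees of $u_{i-1,i},v_{i,i-1}$ are then extracted from the two-periodic morphism complexes $\Hom(\underline{T}_i,\underline{T}_{i\pm1})$, keeping track of the shifts. One checks that the outcome is precisely $A_{(\tilde p,\tilde q)}$ for the stated $(p_i,q_i)$. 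Since this grading refines the $\Z/2$-grading and is additive under composition, matching the degrees of the generators upgrades the algebra isomorphism above to an equivalence of $\frac1D\Z$-graded categories, realized by the evident functor $\underline{T}_i\mapsto X_i$.

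For part~$(2)$: by Proposition~\ref{classif_rational}, the $\frac1D\Z$-graded $A_\infty$-structure induced on $H^{*}(\cC_{d_1,d_2,d_3})$ by its natural enhancement is $\frac1D\Z$-graded strictly homotopic to $m^{a,b}$ for the pair $(a,b)$ read off via \eqref{a,b}, and the rescaling automorphisms of Remark~\ref{rm:structures} (which are grading-preserving, hence legitimate here) identify $m^{a,b}$ with $m^{1,1}$ as soon as $a\neq0$ and $b\neq0$. So it is enough to check $a\neq0$ and $b\neq0$. The threefold cyclic symmetry permuting $z_1,z_2,z_3$ forces $a=b$, so it suffices to show $a\neq0$; for this I would observe that, upon forgetting the $\frac1D\Z$-grading, the minimal $A_\infty$-model of $\cC_{d_1,d_2,d_3}$ agrees, as a $\Z/2$-graded minimal $A_\infty$-category, with the one computed in the $n=3$ case of Theorem~\ref{th:B-side-mn}. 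Since $a$ is invariant under strict homotopy (Proposition~\ref{classification}$(2)$) and is nonzero in that setting by Lemma~\ref{l:mndsg}, we conclude $a=b\neq0$, hence the structure is $\frac1D\Z$-graded homotopic to $m^{1,1}$.

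I expect the main obstacle to lie in the grading bookkeeping of part~$(1)$: one must verify that the internal shifts $R[1-2\tilde d_i]$ in $\underline{T}_1,\underline{T}_2,\underline{T}_3$ conspire so that every generator receives exactly the degree prescribed by $A_{(\tilde p,\tilde q)}$ — in particular that the odd generators $u_{i-1,i},v_{i,i-1}$ acquire the fractional degrees $\tilde p_i,\tilde q_i$ with the correct cyclic index shifts, while the identities remain in degree $0$. Once this is settled, part~$(2)$ and the equivalence assertion follow formally from results already established in Sections~\ref{s:Ainfinity}, \ref{s:classif}, \ref{s:Dbsing} and Subsection~\ref{ss:Atilde}.
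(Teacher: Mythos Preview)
Your proposal is correct, and for part~(2) it is essentially identical to the paper's argument: invoke Proposition~\ref{classif_rational}, note that the constants $a,b$ are insensitive to the grading refinement, and import their nonvanishing from the integer-graded computation in Section~\ref{s:Dbsing}.

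For part~(1), your route differs from the paper's. You propose to pass through the equivalence \eqref{MF_D_sg} to $D_{sg}(W^{-1}(0))$, import the $\Z/2$-graded algebra isomorphism from Theorem~\ref{th:B-side-mn} at $n=3$, and then return to the matrix factorization side to read off the fractional degrees. The paper instead stays entirely inside $MF^{\frac1D\Z}(W)$ and writes down explicit closed odd morphisms $\tilde u_{i-1,i}:\underline{T}_{i-1}\to\underline{T}_i$ (given by the pair $(z_{i+1},-1)$) and $\tilde v_{i,i-1}$ (given by $(z_{i-2},-1)$), checks directly from the shifts $R[1-2\tilde d_i]$ that their degrees are $\tilde p_i,\tilde q_i$, verifies that the ``wrong'' double compositions are null-homotopic, and concludes that the resulting functor $A_{(\tilde p,\tilde q)}\to H^*(\cC_{d_1,d_2,d_3})$ is an equivalence. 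The paper's approach is shorter because it avoids transporting the $\frac1D\Z$-grading across \eqref{MF_D_sg} (which is only stated as a $\Z/2$-graded equivalence); your approach is conceptually clean in that it separates the algebra computation from the grading computation, but as you yourself anticipate, the grading bookkeeping ultimately forces you back to the explicit morphism complexes anyway, at which point you are essentially redoing the paper's computation.

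One small caveat: your appeal to ``threefold cyclic symmetry permuting $z_1,z_2,z_3$'' for $a=b$ is not literally a symmetry of the $\frac1D\Z$-graded category unless the $d_i$ coincide; but since $a,b$ depend only on the underlying $\Z/2$-graded $A_\infty$-structure (where the relevant symmetry does hold, as used before Lemma~\ref{l:mndsg}), the conclusion stands.
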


\begin{proof}(1) For each $i=1,2,3,$ consider the odd closed morphism $\tilde{u}_{i-1,i}: {\underline{T}\,}_{i-1}\to {\underline{T}\,}_i$ given by the pair of morphisms
$$
R\stackrel{z_{i+1}}{\lto} R[1-2\tilde{d}_i],\quad R[1-2\tilde{d}_{i-1}]\stackrel{-1}{\lto} R.
$$
The sign appears because the morphism is odd.
Clearly, $\deg(\tilde{u}_{i-1,i})=\frac{p_i}D=\tilde{p}_i.$ Similarly, consider the odd morphism
$\tilde{v}_{i,i-1}: {\underline{T}\,}_i\to {\underline{T}\,}_{i-1}$ given by the pair of morphisms:
$$
R\stackrel{z_{i-2}}{\lto} R[1-2\tilde{d}_{i-1}],\quad R[1-2\tilde{d}_i]\stackrel{-1}{\lto} R.
$$
It is easy to see that $\deg(\tilde{v}_{i,i-1})=\tilde{q}_i.$ Moreover, the compositions $\tilde{u}_{i+1,i}\tilde{u}_{i-1,i}$
and $\tilde{v}_{i,i-1}\tilde{v}_{i+1,i}$ are homotopic to zero. Hence, we have a functor
$$
A_{(\tilde{p},\tilde{q})}\lto H^{*}(\cC_{d_1,d_2,d_3})
$$
of $\frac1D\Z$-graded categories. It is easily checked to be an equivalence.

(2) The non-vanishing of the constant terms of $m_3(\tilde{u}_{3,1},\tilde{u}_{2,3},\tilde{u}_{1,2})$ and $m_3(\tilde{v}_{2,1},\tilde{v}_{3,2},\tilde{v}_{1,3})$
follows from the results of Section \ref{s:Dbsing}. Indeed these constants terms do not depend on gradings, and they were shown not to vanish for integer gradings. Hence, the statement follows from Proposition \ref{classif_rational}.\end{proof}

\begin{defi}\label{def:Btilde}
For a $\frac1D\Z$-graded $A_{\infty}$-category $B,$ denote by $\tilde{B}$ the $\Z$-graded $A_{\infty}$-category
whose objects are pairs $(X,k),$ where $X\in Ob(B)$ and $0\leq k<D,$ and where morphisms are defined by the formula
\begin{align*} \label{eq:morphisms_cover_category} \Hom_{\tilde{B}}^{2i}((X,k),(Y,l)) & = \Hom^{2i+\frac{2(l-k)}D,even}(X,Y) \\
 \Hom_{\tilde{B}}^{2i-1}((X,k),(Y,l)) &= \Hom^{2i-1+\frac{2(l-k)}D,odd}(X,Y).\end{align*}
The higher products are induced by those of $B.$\end{defi}

\noindent (Compare with the construction in Section \ref{ss:Atilde}.)

It is clear that the assignment $B\mapsto\tilde{B}$ defines a functor from $\frac1D\Z$-graded $A_{\infty}$-categories and $A_{\infty}$-morphisms
to usual $\Z$-graded $A_{\infty}$-categories and $A_{\infty}$-morphisms.

\begin{cor}With the same notation, the DG category $\widetilde{\cC_{d_1,d_2,d_3}}$ is quasi-equivalent to the $A_{\infty}$-category
$(\tilde{A}_{(\tilde{p},\tilde{q})},\tilde{m}^{1,1}),$ where the $A_{\infty}$-structure $\tilde{m}^{1,1}$ is induced by $m^{1,1}.$\end{cor}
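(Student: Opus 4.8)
The plan is to obtain this as a formal consequence of the preceding Proposition together with the functoriality of the construction $B\mapsto\tilde{B}$ of Definition \ref{def:Btilde}. The first step is to replace the $\frac1D\Z$-graded DG category $\cC_{d_1,d_2,d_3}$ by its cohomology: by the standard minimal model theorem (homotopy transfer along a deformation retraction of each $\frac1D\Z$-graded morphism complex onto its cohomology, which is legitimate here because, as already noted for Proposition \ref{classif_rational}, every homological-algebra argument used in Sections \ref{s:Ainfinity} and \ref{s:classif} is insensitive to refining the grading group from $\Z$ to $\frac1D\Z$), the DG category $\cC_{d_1,d_2,d_3}$ is $\frac1D\Z$-graded $A_{\infty}$-quasi-equivalent to $H^{*}(\cC_{d_1,d_2,d_3})$ equipped with its induced minimal $A_{\infty}$-structure. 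Note that both $\widetilde{\cC_{d_1,d_2,d_3}}$ and $\widetilde{H^{*}(\cC_{d_1,d_2,d_3})}$ remain honest $\Z$-graded DG (resp.\ minimal $A_{\infty}$-) categories, since the tilde functor induces the higher products from those of $B.$

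Next I would apply the functor $B\mapsto\tilde{B}.$ Since it acts on each morphism space componentwise in the $\frac1D\Z$-grading, it sends an $A_{\infty}$-functor inducing isomorphisms on cohomology in every degree and parity to one inducing isomorphisms on cohomology in every $\Z$-degree; moreover an isomorphism in $H^{0}$ of the source, having $\frac1D\Z$-degree $0$ and even parity, lifts to an isomorphism in $H^{0}$ of $\tilde{B}.$ Hence $B\mapsto\tilde{B}$ carries quasi-equivalences to quasi-equivalences, and in particular $\widetilde{\cC_{d_1,d_2,d_3}}$ is quasi-equivalent to $\widetilde{H^{*}(\cC_{d_1,d_2,d_3})}.$

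Now I would invoke the preceding Proposition. Part (1) provides an equivalence of $\frac1D\Z$-graded categories between $A_{(\tilde{p},\tilde{q})}$ (with the $p_i,q_i$ displayed there) and $H^{*}(\cC_{d_1,d_2,d_3}),$ and part (2) says that transporting the minimal $A_{\infty}$-structure along this equivalence yields a $\frac1D\Z$-graded $A_{\infty}$-structure on $A_{(\tilde{p},\tilde{q})}$ homotopic to $m^{1,1}.$ Applying $B\mapsto\tilde{B}$ to this equivalence and to the homotopy (again using that the functor preserves quasi-equivalences, hence turns a $\frac1D\Z$-graded strict homotopy into a quasi-equivalence) gives $\widetilde{H^{*}(\cC_{d_1,d_2,d_3})}\simeq\widetilde{(A_{(\tilde{p},\tilde{q})},m^{1,1})}.$ Finally I would unwind the definitions: evaluating Definition \ref{def:Btilde} at $B=A_{(\tilde{p},\tilde{q})}$ (with $u_{i-1,i},v_{i,i-1}$ odd) gives objects indexed by $(X_i,k),$ $0\le k<D,$ and morphisms the subspace of $A_{(\tilde{p},\tilde{q})}(X_i,X_j)\left[\frac{2(\ell-k)}{D}\right]$ spanned by elements of integral degree whose parity matches that integral degree, with higher products induced by $m^{1,1}$ --- which is exactly $\tilde{A}_{(\tilde{p},\tilde{q})}$ of Definition \ref{def:Atilde} with the structure $\tilde{m}^{1,1}.$ Concatenating the quasi-equivalences $\widetilde{\cC_{d_1,d_2,d_3}}\simeq\widetilde{H^{*}(\cC_{d_1,d_2,d_3})}\simeq\widetilde{(A_{(\tilde{p},\tilde{q})},m^{1,1})}=(\tilde{A}_{(\tilde{p},\tilde{q})},\tilde{m}^{1,1})$ proves the statement.

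The argument is essentially formal once the preceding Proposition is in hand; the only points requiring (routine) verification are that the minimal model theorem holds for $\frac1D\Z$-graded DG categories and that $B\mapsto\tilde{B}$ preserves quasi-equivalences. I expect the latter --- in particular tracking essential surjectivity on $H^{0}$ through the passage to integrally graded morphisms --- to be the main, but very mild, obstacle.
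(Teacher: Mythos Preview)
Your proposal is correct and follows exactly the approach the paper intends: the Corollary is stated without proof in the paper, being an immediate consequence of the preceding Proposition together with the functoriality of $B\mapsto\tilde{B}$ (asserted just before the Corollary) and the parenthetical remark ``Compare with the construction in Section~\ref{ss:Atilde}'' identifying $\widetilde{A_{(\tilde{p},\tilde{q})}}$ with $\tilde{A}_{(\tilde{p},\tilde{q})}.$ Your last worry about essential surjectivity is not an issue, since $B\mapsto\tilde{B}$ is bijective on objects by construction.
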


Now write the matrix factorizations in $MF^G(W)$ corresponding to the above generators $E_i^k\in D_{sg}^G(W^{-1}(0)):$
$$
{\underline{\tilde{T}}\,}_i^k=\{R(\chi_k)\stackrel{z_{i+1}z_{i+2}}{\longrightarrow} R(\chi_{k-d_i})\stackrel{z_i}{\lto} R(\chi_k)\}.
$$

Then it is straightforward to see that we have a fully faithful functor of $\Z/2$-graded DG categories
$$\widetilde{\cC_{d_1,d_2,d_3}}\lto MF^G(W),\quad ({\underline{T}\,}_i, k)\mapsto {\underline{\tilde{T}}\,}_i^k.$$

Since the collection of sheaves $\{ \cO_{H_i}(\chi_k) \}_{k=0}^{D-1}$ strongly generate the category of equivariant coherent sheaves on $W^{-1}(0)$ supported on the component $H_i$, we obtain the following result using the same argument as the proof of Lemma \ref{lem:strict_gen_dbsing}:

\begin{prop}\label{D_sg_descr}The triangulated category
$D_{sg}^G(W^{-1}(0))$ is strictly generated by the objects $E_i^k$ introduced above. The resulting $\Z/2$-graded
DG subcategory of $MF^G(W)$ is quasi-equivalent to the $(\Z/2$-graded$)$ $A_{\infty}$-category $\tilde{A}_{(\tilde{p},\tilde{q})}.$\end{prop}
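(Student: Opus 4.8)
The plan is to assemble the two assertions from pieces that are already in place. For strict generation, I would run the argument of Lemma \ref{lem:strict_gen_dbsing} verbatim in the $G$-equivariant setting. It suffices to show that the twisted structure sheaves $\cO_{H_i}(\chi_k)$, $1\le i\le 3$, $0\le k<D$, generate $D^b(\coh^G(W^{-1}(0)))$: indeed $E_i^k$ is the cokernel of the natural map $\cO_{W^{-1}(0)}(-H_i)(\chi_k)\to\cO_{W^{-1}(0)}(\chi_k)$, and the Verdier quotient functor $D^b(\coh^G(W^{-1}(0)))\to D_{sg}^G(W^{-1}(0))$ is essentially surjective, so generation upstairs forces strict generation of $D_{sg}^G$ by the $E_i^k$. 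Here $W^{-1}(0)$ is the union of the three equivariant coordinate planes $H_i=\{z_i=0\}$, and I would filter $D^b(\coh^G(W^{-1}(0)))$ by the strata $H_i\supset\varGamma_{ij}=H_i\cap H_j\supset\{0\}$ exactly as in Lemma \ref{lem:strict_gen_dbsing}. The only input required at each inductive step is that $\{\cO_Z(\chi_k)\}_{0\le k<D}$ generates $D^b(\coh^G(Z))$ whenever $Z$ is a coordinate subspace of $\C^3$ carrying its linear $G$-action (equivalently, the line bundles $\cO(\chi_k)$ generate the derived category of the quotient stack $[Z/G]$), which is precisely the strong generation statement recalled just before the proposition.

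For the comparison of DG categories, I would use the fully faithful $\Z/2$-graded DG functor
\[
\widetilde{\cC_{d_1,d_2,d_3}}\longrightarrow MF^G(W),\qquad ({\underline{T}\,}_i,k)\mapsto {\underline{\tilde{T}}\,}_i^k,
\]
constructed above. Being fully faithful, it identifies $\widetilde{\cC_{d_1,d_2,d_3}}$ with the full DG subcategory of $MF^G(W)$ spanned by the ${\underline{\tilde{T}}\,}_i^k$, and under the equivalence $MF^G(W)\cong D_{sg}^G(W^{-1}(0))$ these objects correspond exactly to the generators $E_i^k$; thus that DG subcategory is the one ``resulting'' from the $E_i^k$. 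I would then invoke the Corollary preceding the proposition, which says $\widetilde{\cC_{d_1,d_2,d_3}}$ is quasi-equivalent to $(\tilde{A}_{(\tilde{p},\tilde{q})},\tilde m^{1,1})$. Composing the two quasi-equivalences yields the claim that the DG subcategory of $MF^G(W)$ cut out by the $E_i^k$ is quasi-equivalent to $\tilde{A}_{(\tilde{p},\tilde{q})}$ equipped with its induced $A_\infty$-structure $\tilde m^{1,1}$.

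The main obstacle is the bookkeeping in the equivariant filtration by supports: one must verify that, after removing the lower strata already known to lie in the triangulated subcategory generated by the $\cO_{H_i}(\chi_k)$, the remaining open piece of each $H_i$ is (an open subset of) a quotient $[\C^2/G]$ on which the twists $\cO(\chi_k)$ still generate, and that the restriction-and-pushforward manipulations used in Lemma \ref{lem:strict_gen_dbsing} carry the $G$-actions along without incident. Since $G$ is finite and acts linearly no genuinely new phenomenon appears, but this is the one point where equivariance has to be tracked explicitly rather than simply quoted.
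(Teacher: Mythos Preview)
Your proposal is correct and follows essentially the same approach as the paper: the paper's proof is the single sentence preceding the proposition, which invokes the argument of Lemma \ref{lem:strict_gen_dbsing} together with the observation that $\{\cO_{H_i}(\chi_k)\}_{k=0}^{D-1}$ strongly generates the equivariant derived category supported on $H_i$, and the quasi-equivalence is exactly the composition of the fully faithful functor $\widetilde{\cC_{d_1,d_2,d_3}}\to MF^G(W)$ with the preceding Corollary. Your write-up simply unpacks these references with more care than the paper does.
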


Taking into account the results of the previous Subsection, we have proved the following theorem.

\begin{theo}The triangulated categories $D\W(C)$ and $D_{sg}^G(W^{-1}(0))$ are equivalent.
\end{theo}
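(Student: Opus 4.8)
The plan is to identify both triangulated categories in the statement with the derived category of a single $A_\infty$-category, and then conclude by the same formal argument used for Theorem~\ref{th:main}: Keller's theorem (Theorem~3.8 of \cite{Ke}, or Lemma~3.34 of \cite{seidel-book}) implies that if two triangulated categories are each strictly generated by a full $A_\infty$-subcategory, and the two subcategories are quasi-equivalent $A_\infty$-categories, then the two triangulated categories are equivalent. So the whole argument is a matter of assembling results already proved.

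On the symplectic side, Proposition~\ref{prop:wrapped_cover} (together with the lemma preceding it) provides integers $(p_i,q_i)$, $1\le i\le 3$, with $p_i+q_i=2d_i$, and graded lifts $\tilde{L}_i^k$, $0\le k<D$, which strictly generate $\W(C)$ and whose full $A_\infty$-subcategory is quasi-equivalent to $\tilde{A}_{(\tilde{p},\tilde{q})}$ with the $A_\infty$-structure $\tilde{m}^{1,1}$. Since the $\tilde{L}_i^k$ strictly generate, $D\W(C)=H^0(\mathrm{Tw}\,\W(C))$ is equivalent to the homotopy category of twisted complexes on this subcategory, i.e.\ to the derived category of $(\tilde{A}_{(\tilde{p},\tilde{q})},\tilde{m}^{1,1})$.

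On the Landau--Ginzburg side, Proposition~\ref{D_sg_descr} shows that the objects $E_i^k=\cO_{H_i}(\chi_k)$ strictly generate $D_{sg}^G(W^{-1}(0))$ and that the $\Z/2$-graded DG subcategory of $MF^G(W)$ they span is quasi-equivalent to $\widetilde{\cC_{d_1,d_2,d_3}}$; by the corollary immediately preceding the theorem, the latter is in turn quasi-equivalent to $(\tilde{A}_{(\tilde{p},\tilde{q})},\tilde{m}^{1,1})$, now for the triple $p_i=2d_i+2d_{i+1}-D$, $q_i=2d_{i-2}+2d_{i-1}-D$. Hence $D_{sg}^G(W^{-1}(0))$ is also equivalent to the derived category of $\tilde{A}_{(\tilde{p},\tilde{q})}$ equipped with $\tilde{m}^{1,1}$.

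It then remains to reconcile the two triples $(p_i,q_i)$ that occur. They do not literally agree, but in both cases $p_i+q_i$ equals twice one of the $d_j$, and after a suitable cyclic relabeling of the three objects---matching the puncture ordering $(\infty,0,1)$ used in Proposition~\ref{prop:wrapped_cover} with the divisors $H_1,H_2,H_3$---one has $p_i+q_i=2d_i$ on both sides for the \emph{same} assignment $i\mapsto d_i$. By Lemma~\ref{lem:closure_shift_iso} the closures of the two copies of $\tilde{A}_{(\tilde{p},\tilde{q})}$ under the shift functor then depend only on $(d_1,d_2,d_3)$ and are isomorphic, and since $\tilde{m}^{1,1}$ is transported along such $\frac1D\Z$-graded isomorphisms (by the corollary following Proposition~\ref{classif_rational}), the resulting $\Z$-graded $A_\infty$-categories, hence their derived categories, are identified. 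Keller's theorem then yields $D\W(C)\cong D_{sg}^G(W^{-1}(0))$, which is Theorem~\ref{th:cover}. The only point requiring genuine care is this last reconciliation: one must track the normalizations of the $\frac1D\Z$-gradings in Proposition~\ref{prop:wrapped_cover} and in the proposition preceding Definition~\ref{def:Btilde} closely enough to see that the two grading data differ only by a shift and a relabeling, so that Lemma~\ref{lem:closure_shift_iso} applies; all the substantive computations have already been carried out in the preceding sections.
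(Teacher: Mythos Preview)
Your proposal is correct and follows essentially the same approach as the paper: the paper's own proof is the one-line citation ``This follows from Proposition~\ref{D_sg_descr}, Proposition~\ref{prop:wrapped_cover} and Lemma~\ref{lem:closure_shift_iso},'' and you have simply unpacked how these three ingredients fit together, including the point (left implicit in the paper) that the two $(p_i,q_i)$ triples differ by a cyclic relabeling together with a shift, so that Lemma~\ref{lem:closure_shift_iso} and the obvious cyclic autoequivalence of $A$ suffice to identify the two copies of $\tilde{A}_{(\tilde p,\tilde q)}$.
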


\begin{proof}This follows from Proposition \ref{D_sg_descr}, Proposition \ref{prop:wrapped_cover} and Lemma \ref{lem:closure_shift_iso}.
\end{proof}

\appendix

\section{A generation result for the wrapped Fukaya category}
\label{appendix}

Throughout this section, we shall consider $\pi \co \Sigma \to D^2,$ a Lefschetz fibration on a compact Riemann surface with boundary,
i.e. a simple branched covering of the disc.  The inverse image of an arc starting at a critical value and ending at $1 \in D^2$
is called a Lefschetz thimble, and the collection  of thimbles obtained by choosing a collection of arcs which do not intersect
in the interior, one for each critical point, is called a \emph{basis of thimbles}.

\begin{theo} \label{thm:thimbles_generate}
Any basis of thimbles generates (in the strict sense) the wrapped Fukaya category of\/ $\Sigma$ for all coefficient rings.
\end{theo}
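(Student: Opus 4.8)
The plan is to show that every object of $\W(\Sigma)$ --- every graded exact Lagrangian arc properly embedded in the completion of $\Sigma$, together with every exact embedded circle --- becomes isomorphic, in the derived category $D\W(\Sigma)$, to an object of the strictly generated subcategory $\langle\Delta_1,\dots,\Delta_m\rangle$, where $\Delta_1,\dots,\Delta_m$ is a fixed basis of thimbles (one for each of the $m$ critical points of $\pi$). Two geometric facts drive the argument. First, a basis of thimbles cuts $\Sigma$ into discs, the sheets of the covering $\pi$; each sheet is an ideal polygon whose sides are copies of the $\Delta_i$ and arcs of $\partial\Sigma$, and this is immediate once one lifts the complement of the chosen arcs in $D^2$. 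Second, the Lagrangian surgery exact triangle holds in $\W(\Sigma)$: a transverse intersection point $p$ of two arcs $K$ and $L$ determines an exact triangle $L\to K\to K\#_p L\to L[1]$, with the opposite surgery at $p$ producing the rotated triangle. Since $\Sigma$ is a curve there are no spin obstructions, so this triangle --- and hence the whole argument --- is valid over an arbitrary coefficient ring and is insensitive to the grading.

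I would treat arcs by induction on $k$, the minimal geometric intersection number of the arc $L$ with $\bigcup_i\Delta_i$ over Hamiltonian isotopy. If $k\ge 1$, pick $p\in L\cap\Delta_i$ so that the subarc of $\Delta_i$ from $p$ to one of its ends at infinity meets $L$ only in $p$, the outermost intersection along $\Delta_i$. Because the thimbles are pairwise disjoint, the surgered arc $L\#_p\Delta_i$, which follows $L$ up to $p$ and then runs along $\Delta_i$ to infinity, is again an embedded exact arc and meets $\bigcup_i\Delta_i$ in strictly fewer points than $L$. Choosing the surgery giving the triangle $\Delta_i\to L\to L\#_p\Delta_i\to\Delta_i[1]$ and applying the inductive hypothesis to $L\#_p\Delta_i$ puts $L$ in $\langle\Delta_1,\dots,\Delta_m\rangle$.

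For the base case $k=0$ the arc $L$ lies in a single sheet $P$ and is Hamiltonian isotopic to the push-off into $P$ of one of the two boundary chains of sides of $P$ cut off by the endpoints of $L$. If that chain contains only arcs of $\partial\Sigma$ (in particular if it is empty), the push-off is a zero object. Otherwise, since a boundary arc may be deleted from either end of the chain at the cost of an isotopy (slide the endpoint along the cylindrical end in which it lies), we may assume the chain consists of thimbles; then the push-off of a chain of length one is the corresponding thimble, while the push-off of a chain of length $\ge 2$ bounds an embedded triangular region together with an endmost thimble $\Delta_i$ and the push-off of the chain with $\Delta_i$ deleted, so by a secondary induction on the chain length the push-off lies in $\langle\Delta_1,\dots,\Delta_m\rangle$. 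Finally, an exact embedded circle must meet some $\Delta_i$ (otherwise it would lie in a sheet and bound a positive-area disc, contradicting exactness), and surgering it with a thimble it meets yields an arc, so the corresponding exact triangle reduces the circle to an arc together with a thimble. This exhausts the objects of $\W(\Sigma)$.

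The step I expect to be the main obstacle is making the surgery exact triangle rigorous in the wrapped setting: the morphism complexes of $\W(\Sigma)$ already encode the wrapping, so $HW^*(K,L)$ is much larger than the space of geometric intersections of $K$ and $L$, and one must check that surgery at a geometric intersection of the un-wrapped representatives really does realize the mapping cone of the corresponding degree-zero class. The cleanest route is likely to run the induction with a cofinal family of compactly supported wrappings, invoke the classical surgery triangle for exact Lagrangians in a Liouville domain at each finite stage, and pass to the colimit. A more routine point is to verify that the outermost surgery stays embedded near the cylindrical ends, where the surgered arc runs parallel to $\Delta_i$ out to infinity, and that this does not disturb the bookkeeping of intersection numbers.
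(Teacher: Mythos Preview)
Your approach is quite different from the paper's. The paper does not argue by surgery in $\Sigma$ at all: given an arc $\gamma$, it attaches a $1$-handle along $\partial\gamma$ to close $\gamma$ to a circle in a larger surface $\Sigma_\gamma$, extends the Lefschetz fibration with one new critical point, and passes to a branched double cover $\tilde\Sigma_\gamma$ in which all thimbles double to exact \emph{circles}. Seidel's Dehn-twist long exact sequence (Proposition~18.15 of \cite{seidel-book}) then shows that a lift of $\gamma$, together with a summand supported away from $\Sigma$, lies in the subcategory generated by the doubled thimbles; applying the restriction functor of \cite{AS} brings this back to $\W(\Sigma)$. Your direct surgery route is more elementary in spirit and treats closed curves uniformly, but the paper's detour trades that for exact triangles involving only closed Lagrangians, where the relevant cones are already established in the literature.

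There is, however, a genuine gap in your inductive step. Polterovich surgery of two \emph{arcs} at a transverse point is not a single arc: locally the resolution connects the four incoming half-arcs pairwise, so globally the result has four ends, hence two components $A_1=L^-\cup\Delta_i^+$ and $A_2=L^+\cup\Delta_i^-$. What you call $L\#_p\Delta_i$ is only $A_1$; the cone of $p\colon\Delta_i\to L$ is $A_1\oplus A_2$, and the triangle places $L$ in $\langle\Delta_i,A_1,A_2\rangle$. The total intersection number of $A_1\sqcup A_2$ with $\bigcup_j\Delta_j$ does drop to $k-1$, so the induction could survive \emph{if both pieces are embedded}. But $A_2=L^+\cup\Delta_i^-$ need not be: $L^+$ may meet $\Delta_i^-$ at the remaining intersections of $L$ with $\Delta_i$, producing self-intersections of $A_2$. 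Choosing $p$ outermost on $\Delta_i$ controls $\Delta_i^+\cap L$ but says nothing about $L^+\cap\Delta_i^-$, and one cannot in general find a point that is simultaneously outermost on both $L$ and $\Delta_i$. To repair the argument you would need either to allow immersed objects with bounding cochains in the induction, or to produce a different exact triangle whose third term is guaranteed to be embedded --- and this is exactly the difficulty the paper sidesteps by passing to closed curves in the double cover.
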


Note that this result is stronger than the split-generation statement that might be expected by applying the results of \cite{generate}.
 We shall prove it by embedding $\Sigma$ inside a larger Riemann surface where the Lagrangians we consider extend to circles.
 Then, following the strategy developed by Seidel in \cite{seidel-book}, we apply the long exact sequence for a Dehn twist
 to derive a generation statement in the Fukaya category of \emph{compact} Lagrangians.  Finally, we shall use the existence
 of a restriction functor constructed in \cite{AS} to conclude the desired result. We shall omit discussions of signs and gradings
 (and the corresponding geometric choices) which essentially play no role in our arguments.

Let us therefore start by choosing a Liouville structure on $\Sigma,$ i.e. a $1$-form $\lambda$ whose differential is symplectic,
and whose associated Liouville flow is outward pointing at the boundary.

In addition to mere exactness, the construction of a restriction functor will require us to consider the following technical condition on a curve $\alpha \in \Sigma$
\begin{equation}
\label{eq:strong_exactness}
\parbox{30em}{$\lambda|\alpha$ has a primitive function which vanishes on the boundary.}
\end{equation}
Choosing a basis of thimbles, we replace $\lambda$ (adding the differential of a function) so that this condition holds for each element of the basis.  For more general curves, we have:
\begin{lemma}
Every exact curve in $\Sigma$ is equivalent, in the wrapped Fukaya category, to a curve satisfying Condition \eqref{eq:strong_exactness}.
\end{lemma}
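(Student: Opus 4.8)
The idea is to move $\alpha$, within its equivalence class in $\W(\Sigma)$, to a curve whose two ends on $\partial\Sigma$ carry the same primitive value, which is exactly what Condition \eqref{eq:strong_exactness} demands. If $\alpha$ is a closed curve then $\partial\alpha=\emptyset$, so \eqref{eq:strong_exactness} holds for any primitive of $\lambda|\alpha$ (one exists since $\alpha$ is exact) and there is nothing to prove. So assume $\alpha$ is a properly embedded arc, invariant under the Liouville flow near $\partial\Sigma$ (if it is only a priori exact, first make it conical near $\partial\Sigma$ by a compactly supported isotopy, which is an equivalence in $\W(\Sigma)$), with ends on possibly coinciding boundary components $\partial_1,\partial_2$. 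Because $\alpha$ is radial in a collar of $\partial\Sigma$, $\lambda|\alpha$ vanishes identically there, so every primitive $f$ of $\lambda|\alpha$ is locally constant near the two ends, with values $c_1,c_2\in\mathbb{R}$; orienting $\alpha$ from the first end to the second, $c_2-c_1=\int_\alpha\lambda$. Since $f$ is determined up to an additive constant, Condition \eqref{eq:strong_exactness} holds for $\alpha$ exactly when $c_1=c_2$, i.e.\ exactly when $\int_\alpha\lambda=0$. Hence it suffices to produce a curve $\alpha'$, equivalent to $\alpha$ in $\W(\Sigma)$, with $\int_{\alpha'}\lambda=0$.

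\noindent I would do this by dragging one end of $\alpha$ along the boundary component it meets. In the model situation, work in a collar $\partial_1\times[r_0,1]$ with coordinates $(\theta,r)$ and $\lambda=r\,\lambda_{\partial_1}$, $\int_{\partial_1}\lambda_{\partial_1}=\ell_1>0$; fix a non-decreasing function $h\co[r_0,1]\to[0,1]$ equal to $0$ near $r_0$ and to $1$ near $1$; and for $s\in\mathbb{R}$ let $\alpha_s$ agree with $\alpha$ outside this collar and equal $\{\theta=\theta_0+s\,h(r)\}$ inside it, where $\{\theta=\theta_0\}$ was the radial segment of $\alpha$. Each $\alpha_s$ is an embedded exact Lagrangian, still radial near $\partial\Sigma$, and a direct computation gives
\[
\int_{\alpha_s}\lambda \;=\; \int_\alpha\lambda \;+\; \kappa\,s,\qquad \kappa \;=\; \frac{\ell_1}{2\pi}\int_{r_0}^{1} r\,h'(r)\,dr \;>\;0 .
\]
Taking $s_0=-\kappa^{-1}\int_\alpha\lambda$ yields $\int_{\alpha_{s_0}}\lambda=0$, and we set $\alpha'=\alpha_{s_0}$. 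When both ends of $\alpha$ lie on the same component one must be a little more careful to keep $\alpha_s$ embedded: rotate both collar segments simultaneously, or route one end past the other through the interior, in either case by an admissible isotopy that still changes $\int_\alpha\lambda$ by a prescribed amount, iterating in small steps if necessary.

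\noindent Finally, the family $\{\alpha_{ts_0}\}_{t\in[0,1]}$ is an isotopy through embedded exact Lagrangians that are conical near $\partial\Sigma$ -- geometrically a partial wrapping of $\alpha$ localized near one end. By the standard invariance properties of the wrapped Fukaya category (see \cite{AS,generate}) the associated continuation element is an isomorphism $\alpha\xrightarrow{\sim}\alpha'$ in $\W(\Sigma)$. Since $\int_{\alpha'}\lambda=0$, there is a primitive of $\lambda|\alpha'$ vanishing on $\partial\Sigma$, so $\alpha'$ satisfies Condition \eqref{eq:strong_exactness}, and $\alpha$ is equivalent to it in $\W(\Sigma)$.

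\noindent The substance of the argument is in the last two steps: one must check that dragging an endpoint along the boundary is an isotopy through admissible objects of $\W(\Sigma)$ that induces an honest isomorphism there (rather than a merely non-invertible continuation map), and that this operation adjusts $\int_\alpha\lambda$ continuously and without bound -- which is the reason for the spiralling model, making $\int_{\alpha_s}\lambda$ an affine, hence surjective, function of the dragging parameter. The reduction to arcs and the computation of $\kappa$ are routine.
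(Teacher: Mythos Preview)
Your argument is correct but proceeds along a genuinely different line from the paper's. The paper fixes the endpoints of the arc and instead adjusts the \emph{interior}: it splits into the separating and non-separating cases, and in the separating case uses Stokes's theorem to identify the defect $c_2-c_1$ with an area, then isotopes the arc rel boundary to change the enclosed area until the defect vanishes; this isotopy becomes Hamiltonian after completing $\Sigma$. You, by contrast, keep the interior essentially fixed and \emph{slide one endpoint} along its boundary circle, observing that this is precisely a localized wrapping, hence a quasi-isomorphism in $\W(\Sigma)$, and that it moves $\int_\alpha\lambda$ affinely in the sliding parameter.

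Your route is arguably more in the spirit of the wrapped category---the invariance you invoke is exactly the statement that partial wrapping at one end is an isomorphism---and it avoids the separating/non-separating dichotomy entirely. The paper's route has the virtue of using only compactly supported (hence manifestly Hamiltonian, after completion) isotopies, so the equivalence in $\W(\Sigma)$ is the most elementary possible one. The one place where your argument is thinner than the paper's is the case where both ends lie on the same boundary component: rotating both ends by the same angle changes $\int_\alpha\lambda$ by zero (the two collar contributions cancel), so you must rotate by different amounts and then genuinely manage the embeddedness issue. Your parenthetical ``iterate in small steps'' is the right idea---after a small slide one can reposition by a compactly supported isotopy and repeat---but it would be worth spelling this out in a sentence rather than leaving it as a hint.
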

\begin{proof}
The quasi-isomorphism class of a curve is invariant under Hamiltonian isotopies in the completion of $\Sigma$ to a surface of infinite area.
We leave the (easier) non-separating case to the reader, and assume we are given a curve $\alpha_0$ whose union with a subset of $\partial \Sigma$
(consisting of an interval together with some components) bounds a submanifold  $\Sigma_0.$  Stokes's theorem implies that difference between
the values of a primitive at the two endpoints of $\alpha_0$ equals
\begin{equation*}
\int_{\Sigma_0} \omega - \int_{\Sigma_0\cap \partial \Sigma} \lambda
\end{equation*}
where each component of $ \Sigma_0 \cap \partial \Sigma $ is given the orientation induced as a subset of the boundary of $\Sigma.$
Note that the integral over the boundary is strictly greater than $0$ and smaller than the area of $\Sigma.$
In particular, we may isotope $\alpha_0,$ through embedded curves which have the same boundary, to a curve $\alpha_1$ bounding a surface $\Sigma_1$
of area exactly $ \int_{\Sigma_0\cap \partial \Sigma} \lambda .$  Stokes's theorem now implies that any primitive on $\alpha_1$
must have equal values at the endpoints.   The isotopy between $\alpha_0$ and $\alpha_1$ can be made Hamiltonian after enlarging $\Sigma$
by attaching infinite cylinders to its boundary components.
\end{proof}

To prove that thimbles generate the wrapped Fukaya category, it suffices therefore to prove that an arbitrary curve $\gamma,$
satisfying Condition \eqref{eq:strong_exactness}, is equivalent to an iterated cone built from thimbles.
We consider the Riemann surface $\Sigma_{\gamma}$ obtained by attaching a $1$-handle along the boundary of $\gamma.$
Weinstein's theory of handle attachment gives a Liouville form on $\Sigma_{\gamma}$ for which the inclusion of $\Sigma$ is a
subdomain, and such that the union of $\gamma$ with the core of the new handle is an exact Lagrangian circle which we shall denote $\gamma_0.$
In addition, we may construct a Lefschetz fibration
\begin{equation*}
  \pi_{\gamma} \co \Sigma_{\gamma} \to D^{2}(1+\epsilon)
\end{equation*}
over the disc of radius $1+\epsilon,$ whose restriction to $\Sigma$ agrees with $\pi,$ and which has exactly one critical point outside the unit disc.

Let us choose a basis of thimbles for $\pi_{\gamma}$ extending the previous basis, and such that the additional arc does not enter the unit disc.
We then consider a double cover of $\Sigma_{\gamma}$ denoted $ \tilde{\Sigma}_{\gamma} ,$ which is branched at the inverse image of $1+\epsilon.$
The thimbles of $\pi_{\gamma}$ double to exact Lagrangian circles $(\gamma_1, \ldots, \gamma_{d}, \gamma_{d+1})$ in $ \tilde{\Sigma}_{\gamma},$
with the convention that $\gamma_{d+1}$ is the double of the thimble coming from the new critical point.  Since $\gamma_0$ does not link the branching point,
its inverse image in $ \tilde{\Sigma}_{\gamma}  $ consists of a pair of curves which we shall denote $\gamma_{\pm}.$

The following result is essentially Proposition 18.15 of \cite{seidel-book}. Its proof relies on the correspondence between
algebraic and geometric Dehn twists, and the fact that applying a series of Dehn twists about the curves $\gamma_1, \ldots, \gamma_{d+1}$
maps $\gamma_+$ to a curve isotopic to $\gamma_-.$
\begin{lemma} \label{lem:twist_trick}
The direct sum  of $\gamma_+$  with an object geometrically supported on $\gamma_{-}$ lies in the category generated by
$(\gamma_1, \ldots, \gamma_{d}, \gamma_{d+1}).$ \qed
\end{lemma}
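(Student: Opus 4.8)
The plan is to iterate Seidel's long exact sequence for Dehn twists and then exploit the fact that $\gamma_+$ and $\gamma_-$ are disjoint in order to split $\gamma_+$ off as a direct summand. Recall from \cite{seidel-book} that for an exact Lagrangian circle $L\subset\tilde{\Sigma}_{\gamma}$ and any object $K$ of the Fukaya category there is an exact triangle $K\to\tau_L(K)\to \Hom^{\bdot}(L,K)^{\vee}\otimes L[1]\to K[1]$; in particular $\tau_L(K)$ lies in the triangulated subcategory generated by $K$ and $L$, and the cone of the canonical map $K\to\tau_L(K)$ is a finite direct sum of shifts of $L$. Moreover, by the identification of algebraic with geometric Dehn twists, $\tau_L(K)$ is isomorphic in the Fukaya category to the object supported on the geometric Dehn twist of the Lagrangian underlying $K$; this is the input alluded to just before the statement.

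First I would set $\gamma_+^{(0)}=\gamma_+$ and inductively $\gamma_+^{(j)}=\tau_{\gamma_j}(\gamma_+^{(j-1)})$ for $j=1,\dots,d+1$. Applying the exact triangle at each step gives maps $\gamma_+^{(j-1)}\to\gamma_+^{(j)}$ whose cones are direct sums of shifts of $\gamma_j$. An iterated application of the octahedral axiom then shows that the cone $Q$ of the composite $\gamma_+=\gamma_+^{(0)}\to\gamma_+^{(d+1)}$ is an iterated extension of these cones, so $Q$ belongs to the triangulated subcategory generated by $\gamma_1,\dots,\gamma_{d+1}$. On the other hand, the hypothesis that the product of geometric Dehn twists about $\gamma_1,\dots,\gamma_{d+1}$ carries $\gamma_+$ to a curve isotopic to $\gamma_-$, together with the algebraic--geometric correspondence, identifies $\gamma_+^{(d+1)}$ with an object geometrically supported on $\gamma_-$. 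We thus obtain an exact triangle $\gamma_+\xrightarrow{a}\gamma_+^{(d+1)}\to Q\to\gamma_+[1]$ with $Q\in\langle\gamma_1,\dots,\gamma_{d+1}\rangle$.

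The last step — which carries the actual geometric content — is to observe that $\gamma_+$ and $\gamma_-$ are \emph{disjoint} exact curves, being the two components of the preimage of $\gamma_0$ under the double cover $\tilde{\Sigma}_{\gamma}\to\Sigma_{\gamma}$, which is unbranched over $\gamma_0$. Hence $HF^{\bdot}(\gamma_+,\gamma_+^{(d+1)})=0$ irrespective of any rank-one local system or grading shift that $\gamma_+^{(d+1)}$ may carry, so the map $a$ in the triangle above vanishes. A distinguished triangle whose first map is zero splits, giving $Q\cong\gamma_+^{(d+1)}\oplus\gamma_+[1]$. Since $Q$ lies in the subcategory generated by $\gamma_1,\dots,\gamma_{d+1}$ and this subcategory is closed under shifts, the same holds for $\gamma_+^{(d+1)}[-1]\oplus\gamma_+$; as $\gamma_+^{(d+1)}[-1]$ is still geometrically supported on $\gamma_-$, this is exactly the assertion of the lemma.

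I expect that the only delicate points are the ones I have deliberately suppressed above: checking that Seidel's exact triangle and the algebraic--geometric Dehn twist correspondence are valid in the present setting over an arbitrary coefficient ring and for the gradings and signs we are ignoring. For Riemann surfaces these are standard and cause no genuine difficulty; the essential input is the disjointness of $\gamma_{\pm}$, which is precisely what upgrades the weaker conclusion ``$\gamma_+^{(d+1)}$ lies in $\langle\gamma_1,\dots,\gamma_{d+1}\rangle$ modulo a cone on $\gamma_+$'' to the splitting statement in the lemma.
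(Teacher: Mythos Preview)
Your argument is correct and is precisely the content of the reference the paper invokes (Proposition 18.15 of \cite{seidel-book}); the paper gives no proof of its own beyond naming the two ingredients you use, namely the correspondence between algebraic and geometric Dehn twists and the fact that the product of twists about $\gamma_1,\dots,\gamma_{d+1}$ carries $\gamma_+$ to $\gamma_-$. One small quibble: the exact triangle is usually written with $HF^{\bdot}(L,K)\otimes L$ rather than its dual, but since you only use that the cone is a finite direct sum of shifts of $L$, this has no effect on the argument.
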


Proposition 18.15 of \cite{seidel-book} in fact describes the precise object supported on $\gamma_-$ which
appears in this Lemma; as this is inconsequential for our intended use we avail ourselves of the option of omitting any discussion of signs and gradings.
We complete this appendix with the proof of its main result:

\begin{proof}[Proof of Theorem \ref{thm:thimbles_generate}]
The inverse image of $\Sigma$ in $\tilde{\Sigma}_{\gamma} $ consists of two components; by fixing the one including $\gamma_+,$ we obtain an inclusion
\begin{equation*}
  \iota \co \Sigma \to \tilde{\Sigma}_{\gamma},
\end{equation*}
which is again an inclusion of Liouville subdomains for an appropriate choice of Liouville form on the double branched cover.

By construction, $\gamma_{-}$ and $\gamma_{d+1}$ are disjoint from $\iota(\Sigma),$ while $\gamma_+$ intersects $\iota(\Sigma)$ in
$\gamma$ and $(\gamma_{1}, \ldots, \gamma_{d})$ in the originally chosen basis of vanishing cycles.  Since, by construction,
Condition \eqref{eq:strong_exactness} holds for these curves, we may apply the restriction functor defined in Sections 5.1 and 5.2 of
\cite{AS}.  This $A_{\infty}$ functor, defined on the subcategory of the Fukaya category of $ \tilde{\Sigma}_{\gamma} $
consisting of objects supported on one of the curves $(\gamma_+, \gamma_-, \gamma_1,  \ldots, \gamma_{d}),$
has target the wrapped Fukaya category of $ \iota(\Sigma) $ and takes a curve to its intersection with the subdomain.
By Lemma \ref{lem:twist_trick}, the direct sum of $\gamma_+$ and an object supported on $\gamma_-$ lies in the category
by generated by $ (\gamma_1, \ldots, \gamma_{d}, \gamma_{d+1}) .$  Since  $\gamma_-$ is disjoint from $\iota(\Sigma),$
the image of this direct sum under restriction is $\gamma,$ so we conclude, as desired, that $\gamma$ lies in the category generated by thimbles.
\end{proof}

\section{Idempotent completion}

The purpose of this appendix is to prove that the triangulated category of singularities $D_{sg}(X_0)$ of the singular fiber
$X_0=W^{-1}(0)$ of the LG model $(X(n), W)$ is idempotent complete. This implies that the
derived wrapped Fukaya category $D\W(C)$ is also idempotent complete.

A full triangulated subcategory $\cN$ of a triangulated category $\cT$
is called {\em dense} in $\cT$ if each object of $\cT$ is a direct
summand of an object isomorphic to an object in $\cN.$
An amazing theorem of R.~Thomason \cite[Th. 2.1]{Th} asserts that
there
is an one-to-one correspondence between strictly full dense
triangulated subcategories $\cN$ in $\cT$ and subgroups $H$ of the
Grothendieck group $K_0(\cT).$
Moreover, we know that under this correspondence $\cN$ goes to the image of $K_0(\cN)$ in
$K_0(\cT)$ and to $H$ we attach the full subcategory $\cN_H$ whose
objects are those $N$ in $\cT$ such that $[N]\in H \subset K_0(\cT).$
Actually, in this situation map from $K_0(\cN)$ to $K_0(\cT)$ is an inclusion.

Let us consider the triangulated category of singularities $D_{sg}(Z)$ for some scheme $Z.$
The Grothendieck group $K_0(D_{sg}(Z))$ is equal to the cokernel of the map $K_0(\perf{Z})\to K_0(\db{\coh Z}).$

On the other hand, by
\cite[Th. 9]{Schl} (see also \cite[Th. 5.1]{Ke}) there is a long exact sequence for K-groups
$$
\cdots\lto K_i(\perf{Z})\lto K_i(\db{\coh Z})\lto K_i(\overline{D_{sg}(Z)})\lto
K_{i-1}(\perf{Z})\lto \cdots
$$
where $\overline{D_{sg}(Z)}$ is the idempotent closure (or {\it Karoubian completion}) of $D_{sg}(Z).$

Using the fact that $K_{-1}$  is trivial for
a small abelian category (\cite[Th.~6]{Schl}),
we obtain a short exact sequence
$$
0\lto K_0(D_{sg}(Z))\lto K_0(\overline{D_{sg}(Z)})\lto
K_{-1}(\perf{Z})\lto 0.
$$
This sequence shows that $K_{-1}(\perf{Z})$ is a measure of the difference
between $D_{sg}(Z)$ and its idempotent
completion $\overline{D_{sg}(Z)}.$

To summarize all these results, we obtain the following proposition:

\begin{prop} The triangulated category of singularities $D_{sg}(Z)$ is idempotent complete if and only if
$K_{-1}(\perf{Z})=0.$
\end{prop}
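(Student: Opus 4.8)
The plan is to deduce the statement by combining the two inputs recalled above: Thomason's classification of dense triangulated subcategories and Schlichting's localization sequence in $K$-theory. First I would observe that, by the very definition of the Karoubian completion, $D_{sg}(Z)$ is a strictly full dense triangulated subcategory of $\overline{D_{sg}(Z)}$, and that $D_{sg}(Z)$ is idempotent complete exactly when the inclusion $D_{sg}(Z)\hookrightarrow\overline{D_{sg}(Z)}$ is an equivalence --- equivalently, when $D_{sg}(Z)$ already contains, up to isomorphism, every object of $\overline{D_{sg}(Z)}$.

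Next I would apply Thomason's theorem \cite[Th.~2.1]{Th} with $\cT=\overline{D_{sg}(Z)}$: strictly full dense triangulated subcategories of $\cT$ correspond bijectively to subgroups of $K_0(\cT)$, the subcategory $\cN$ going to the image of $K_0(\cN)\to K_0(\cT)$, with $\cT$ itself corresponding to the full group. Hence $D_{sg}(Z)$ is idempotent complete if and only if the natural map
$$
K_0(D_{sg}(Z))\lto K_0\bigl(\overline{D_{sg}(Z)}\bigr)
$$
is surjective.

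It then remains to compute the cokernel of this map, and this is precisely what the short exact sequence
$$
0\lto K_0(D_{sg}(Z))\lto K_0\bigl(\overline{D_{sg}(Z)}\bigr)\lto K_{-1}(\perf{Z})\lto 0
$$
established above provides: it comes from Schlichting's long exact sequence \cite[Th.~9]{Schl} for the localization $\perf{Z}\to\db{\coh Z}\to\overline{D_{sg}(Z)}$, together with the vanishing $K_{-1}(\coh Z)=0$ for the small abelian category $\coh Z$ \cite[Th.~6]{Schl} and the identification of $K_0(D_{sg}(Z))$ with the cokernel of $K_0(\perf{Z})\to K_0(\db{\coh Z})$. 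Since the cokernel of $K_0(D_{sg}(Z))\to K_0(\overline{D_{sg}(Z)})$ is thus $K_{-1}(\perf{Z})$, surjectivity of that map is equivalent to $K_{-1}(\perf{Z})=0$; combined with the previous step, this gives the proposition.

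The only point that deserves care is matching up the two descriptions of the map $K_0(D_{sg}(Z))\to K_0(\overline{D_{sg}(Z)})$: the one implicit in Thomason's correspondence, induced by the inclusion of categories, and the one arising in Schlichting's localization sequence, induced by passing from the Verdier quotient $D_{sg}(Z)$ to its idempotent completion. Unwinding the definition of the connecting maps in the localization sequence shows these coincide, so that ``surjective'' in the second step and ``cokernel $K_{-1}(\perf{Z})$'' in the third refer to one and the same homomorphism. This verification is routine; consequently the whole argument reduces to carefully assembling the cited results, and I do not anticipate any substantive obstacle.
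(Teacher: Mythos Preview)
Your proposal is correct and follows exactly the approach of the paper: the proposition is stated there as a summary of the preceding discussion, which invokes Thomason's classification of dense subcategories together with Schlichting's localization sequence and the vanishing of $K_{-1}$ for small abelian categories to produce the short exact sequence $0\to K_0(D_{sg}(Z))\to K_0(\overline{D_{sg}(Z)})\to K_{-1}(\perf{Z})\to 0$. Your write-up simply makes explicit the final deduction the paper leaves to the reader.
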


Also recall that the negative $K$-groups are defined
by induction from the following exact sequences
\begin{multline*} 0\to K_i(\perf{Z})\to K_i(\perf{Z[t]})\oplus
K_i(\perf{Z[t^{-1}]})\to K_i(\perf{Z[t,t^{-1}]})\to\\
\to K_{i-1}(\perf{Z})\to 0.
\end{multline*}

In particular, the group  $K_{-1}(\perf{Z})$ is isomorphic to the
cokernel of the canonical map $K_0(\perf{Z[t]})\oplus
K_0(\perf{Z[t^{-1}]})\to K_0(\perf{Z[t,t^{-1}]}).$

Now we consider the specific case $Z=X_0,$ where $X_0$ is the singular fiber of\/ $W: X(n)\to \bbC$ defined in Section \ref{s:LGmodel},
i.e.\ the union of the toric divisors of $X(n).$

\begin{prop} Let $X_0$ be as above, then $K_{-1}(\perf{X_0})=0.$
\end{prop}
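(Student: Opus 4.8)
The plan is to realise $X_0$ as an iterated union of smooth toric surfaces glued along curves, arranged so that every intersection locus is connected, and then to run a Mayer--Vietoris induction in nonconnective $K$-theory. Throughout, $K_\bullet$ denotes the nonconnective $K$-theory of the category of perfect complexes, so $K_{-1}(\perf{X_0})$ is the group in question and the Bass delooping formula recalled above applies termwise. By \S\ref{s:LGmodel}, $X_0=\bigcup_{i=1}^n H_i$ is the union of the toric divisors of the \emph{smooth} toric threefold $X(n)$, so each $H_i$ is a smooth toric surface; by Lemma \ref{lem:strict_gen_dbsing} and the description of the $\varGamma_{ij}=H_i\cap H_j$ preceding it, the dual complex $\Delta$ of $X_0$ (a vertex for each $H_i$, an edge for each non-empty $\varGamma_{ij}$, a $2$-simplex for each triple point) is the cross-section of the fan $\Sigma_n$, and by inspection of Figure \ref{fig:LG} it is a triangulated $2$-disk all of whose vertices lie on the boundary, i.e.\ a triangulated polygon, with $\bbA^1$-curves as boundary edges and $\PP^1$-curves as interior diagonals (consistently with Lemma \ref{lem:strict_gen_dbsing}, which exhibits the link of each vertex $i$ as the arc $D_i$).

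First I would fix a good ordering of the components. Any triangulated polygon on at least three vertices has an \emph{ear}, a vertex of valence $2$; deleting it together with the two edges and the triangle through it leaves a triangulated polygon on one fewer vertex. Iterating this down to a single triangle and reversing, we obtain an ordering $H_{\sigma(1)},\dots,H_{\sigma(n)}$ such that, writing $Y_m=H_{\sigma(1)}\cup\dots\cup H_{\sigma(m)}$, the vertex $\sigma(m)$ is an ear of the sub-polygon induced on $\{\sigma(1),\dots,\sigma(m)\}$, its two earlier neighbours $a,b$ spanning a triangle with it in $\Delta$. Hence $Z_m:=Y_{m-1}\cap H_{\sigma(m)}=\varGamma_{\sigma(m),a}\cup\varGamma_{\sigma(m),b}$ is \emph{connected} for every $m\ge 2$ — two rational curves (each an $\bbA^1$ or a $\PP^1$) meeting at the triple point $H_{\sigma(m)}\cap H_a\cap H_b$ — while $Z_1=\varnothing$ and $Z_2=\varGamma_{\sigma(1),\sigma(2)}$.

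Next, since $X_0\subset X(n)$ is reduced, one has $\cO_{Y_m}\cong\cO_{Y_{m-1}}\times_{\cO_{Z_m}}\cO_{H_{\sigma(m)}}$, so Zariski-locally $Y_m$ is the spectrum of a Milnor square. The Milnor Mayer--Vietoris sequence — exact in degrees $\le 1$, extended to all non-positive degrees by Bass's construction of the negative $K$-groups, and globalised by Zariski descent for nonconnective $K$-theory of perfect complexes (Thomason--Trobaugh) — yields an exact sequence
\[
K_0(\perf{Y_{m-1}})\oplus K_0(\perf{H_{\sigma(m)}})\xrightarrow{\ f_m\ }K_0(\perf{Z_m})\lto K_{-1}(\perf{Y_m})\lto K_{-1}(\perf{Y_{m-1}})\oplus K_{-1}(\perf{H_{\sigma(m)}}).
\]
Here $K_{-1}(\perf{H_{\sigma(m)}})=0$ because $H_{\sigma(m)}$ is regular. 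A direct Mayer--Vietoris computation along the nodal curve $Z_m$ (cutting at its single node, using that a point, $\bbA^1$ and $\PP^1$ are regular) shows that $K_0(\perf{Z_m})$ is generated by $[\cO_{Z_m}]$ together with the image of $\operatorname{Pic}(Z_m)$, and that $\operatorname{Pic}(Z_m)\cong\bigoplus_\varGamma\operatorname{Pic}(\varGamma)$, the sum over the $\PP^1$-components of $Z_m$. Since $Z_m$ is connected, $[\cO_{Z_m}]$ is the restriction of $[\cO_{H_{\sigma(m)}}]$; and for each $\PP^1$-component $\varGamma$ the restriction $\operatorname{Pic}(H_{\sigma(m)})\to\operatorname{Pic}(\varGamma)\cong\bbZ$ is surjective (pair $\varGamma$ against a torus-invariant curve of $H_{\sigma(m)}$ meeting it transversally in one point), so $\operatorname{Pic}(H_{\sigma(m)})\to\operatorname{Pic}(Z_m)$ is surjective. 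Therefore $f_m$ is surjective.

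Finally I would induct on $m$: $K_{-1}(\perf{Y_1})=K_{-1}(\perf{H_{\sigma(1)}})=0$, and if $K_{-1}(\perf{Y_{m-1}})=0$ then the displayed sequence, the vanishing $K_{-1}(\perf{H_{\sigma(m)}})=0$, and the surjectivity of $f_m$ force $K_{-1}(\perf{Y_m})=0$; taking $m=n$ gives $K_{-1}(\perf{X_0})=0$. The step I expect to be the real content is the combinatorial one of the second paragraph — that $X_0$ admits a build-up by components whose intersection loci are all connected. This genuinely uses the disk structure of $\Delta$ read off from Lemma \ref{lem:strict_gen_dbsing} and Figure \ref{fig:LG}: for a careless ordering the intermediate union $Y_m$ can have $K_{-1}\ne 0$ — the cokernel of $f_m$ acquires a copy of $\bbZ$ for every extra connected component of $Z_m$ — and the induction would collapse. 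Once connectedness is arranged, the remaining $K$-theoretic ingredients (Milnor patching, vanishing of negative $K$ for regular schemes, and the $\operatorname{Pic}$/$K_0$ of a chain of rational curves) are entirely standard.
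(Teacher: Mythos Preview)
Your argument is correct, and it takes a genuinely different route from the paper's proof.

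The paper handles everything in one step via the conductor square for the full normalisation $X_0'\to X_0$: Weibel's long exact sequence reduces the vanishing of $K_{-1}(X_0)$ to (i) $K_{-1}(X_0')=0$ by smoothness, (ii) surjectivity of $K_0(X_0')\to K_0(\varGamma')$, and (iii) $K_{-1}(\varGamma)=0$ for the one-dimensional singular locus $\varGamma$. For (iii) the paper quotes Weibel's formula $K_{-1}(C)\cong\bbZ^{\lambda}$ with $\lambda$ the number of loops in the bipartite incidence graph of $C$, and checks that the graph for $\varGamma$ is a tree.

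You instead run an inductive Mayer--Vietoris, peeling off one component at a time; your key combinatorial step is the ear decomposition of the cross-section polygon, which forces each intermediate intersection $Z_m$ to be a \emph{connected} chain of at most two rational curves. This connectedness (together with the surjectivity onto $\operatorname{Pic}$ of each $\PP^1$-component, which you verify correctly) is precisely what makes $K_{-1}=0$ propagate. The two arguments are really the same phenomenon seen from different angles: the ``no loops'' in Weibel's bipartite graph and your ``$Z_m$ connected at every step'' are both manifestations of the fact that the dual complex of $X_0$ is a disk. Your approach is more self-contained, since it avoids importing the curve formula and makes the combinatorics explicit; the paper's is shorter because it offloads that content to a citation. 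One small remark: your ``Milnor square globalised by Zariski descent'' is fine, but you could equally observe that each of your squares is itself a conductor square for the partial normalisation $Y_{m-1}\sqcup H_{\sigma(m)}\to Y_m$, so the very same theorem of Weibel that the paper cites applies at each inductive step.
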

\begin{proof}
Let us denote by $\varGamma\subset X_0$ the one-dimensional subscheme consisting of the singularities of $X_0,$ i.e.\ the union
of all the toric curves in $X(n).$ Denote by $\pi:X_0'\to X_0$ the normalization of $X_0$ and set $\varGamma'=\varGamma\times_{X_0} X_0'.$
 By \cite[Th. 3.1]{We} there is a long exact sequence of $K$-groups which in this case gives
 the following exact sequence:
 $$
 K_0(X_0')\oplus K_0(\varGamma)\lto K_{0}(\varGamma')\lto K_{-1}(X_0)\lto K_{-1}(X_0')\oplus K_{-1}(\varGamma),
 $$
 where all $K$\!-groups are $K$\!-groups of perfect complexes.
Since the normalization $X_0'$
is the disjoint union of smooth toric surfaces we have $K_{-1}(X_0')=0.$ Considering components of the normalization $X_0'$
it is also easy to deduce that the restriction
map $K_0(X_0')\to K_0(\varGamma')$ is surjective. Thus it is sufficient to show that $K_{-1}(\varGamma)$ is trivial.

To any Noetherian curve $C$ we can associate a bipartite graph $\gamma$ defined as follows.
The graph $\gamma$ has one vertex for each singular point $s$ of $C$ and one vertex for each component of the normalization $p: C'\to C.$
For each point of $p^{-1}(s)$ there is an edge connecting the corresponding component of $C'$ with the singular point $s$ of $C.$

By \cite[Lemma 2.3]{We} there is an isomorphism $K_{-1}(C)=\bbZ^{\lambda},$ where $\lambda$ is the number of loops in the bipartite graph $\gamma$
associated to $C.$
It is easy to see that in our case the bipartite graph of $\varGamma$ does not have any loop. Thus $K_{-1}(\varGamma)=0,$ and
$K_{-1}(X_0)=0$ too.
\end{proof}

\end{document}